\numberwithin{equation}{section}
\theoremstyle{plain}
\newtheorem{theorem}{Theorem}[section]
\newtheorem{lemma}[theorem]{Lemma}
\newtheorem{proposition}[theorem]{Proposition}
\theoremstyle{definition}
\newtheorem{example}[theorem]{Example}
\newtheorem{remark}[theorem]{Remark}
\newtheorem{definition}[theorem]{Definition}
\newcommand{\up}{\vspace{-0.5cm}}
\newcommand{\Alex}{\mathsf{A}}
\newcommand{\Reid}{\mathsf{R}} 
\newcommand{\C}{\mathcal{C}}
\newcommand{\Cob}{\mathsf{Cob}}
\newcommand{\mcg}{\mathcal{M}}
\newcommand{\Tang}{\mathsf{Tang}}
\newcommand{\TCob}{\mathsf{TangCob}}
\newcommand{\PMod}{\mathsf{grMod}}
\newcommand{\PVect}{\mathsf{grVect}}
\newcommand{\Aut}{\operatorname{Aut}}
\newcommand{\Hom}{\operatorname{Hom}}
\newcommand{\Id}{\operatorname{Id}}
\newcommand{\vol}{\omega}
\newcommand{\Ker}{\operatorname{Ker}}
\newcommand{\Tors}{\operatorname{Tors}}
\newcommand{\rank}{\operatorname{rank}}
\newcommand{\mcyl}{\mathbf{c}}
\newcommand{\Q}{\mathbb{Q}}
\newcommand{\Z}{\mathbb{Z}}
\newcommand{\F}{\mathbb{F}}
\newcommand{\N}{\mathbb{N}}
\title[]{A functorial extension of the abelian\\ Reidemeister torsions of three-manifolds}
\date{October 23, 2014}
\author[]{Vincent Florens} 
\address{Vincent Florens \newline
\indent  LMA, Universit\'e de Pau \& CNRS \newline
\indent Avenue de l'Universit\'e \newline 
\indent  64000 Pau,  France \newline 
\indent  $\mathtt{vincent.florens@univ\hbox{-}pau.fr}$
} 
\author[]{Gw\'ena\"el Massuyeau}
\address{Gw\'ena\"el Massuyeau \newline
\indent IRMA,    Universit\'e de Strasbourg \& CNRS \newline
\indent 7 rue Ren\'e Descartes \newline
\indent 67084 Strasbourg, France \newline
\indent $\mathtt{massuyeau@math.unistra.fr}$
}
\begin{document}

\begin{abstract}
  Let $\F$ be a field and let $G\subset \F\setminus \{0\}$ be a multiplicative subgroup.
We consider the category $\Cob_G$ of $3$-dimensional cobordisms  equipped with a representation of their fundamental group in  $G$, 
and the category $\mathsf{Vect}_{\F,\pm G}$ of  $\F$-linear maps  defined up to multiplication by an element of $\pm G$.
Using the elementary theory of  Reidemeister torsions, we construct  a ``Reidemeister functor'' from  $\Cob_G$  to $\mathsf{Vect}_{\F,\pm G}$.
In particular, when the group $G$ is  free abelian  and $\F$ is the field of fractions of  the group ring $\Z[G]$,
we obtain a functorial formulation of an Alexander-type invariant introduced by Lescop for $3$-manifolds with boundary;
when $G$ is trivial, the Reidemeister functor specializes to the TQFT developed by Frohman and Nicas to enclose the Alexander polynomial of knots.
The study of the Reidemeister functor is carried out for any multiplicative subgroup  $G\subset \F\setminus \{0\}$.
We obtain a duality result and we show that the resulting projective representation of the monoid of homology cobordisms 
is equivalent to the Magnus representation combined with the relative Reidemeister torsion.
\end{abstract}

\maketitle

\setcounter{tocdepth}{1}
{\footnotesize \tableofcontents}

\section{Introduction}

Let $\Cob$  be the category of $3$-dimensional cobordisms introduced by Crane and Yetter \cite{CY}, and whose definition we briefly recall.
The objects of $\Cob$  are integers $g\geq 0$, and correspond to compact connected oriented surfaces $F_g$ of genus $g$ with one boundary component. 
Indeed,  we fix for every $g\geq 0$ a model surface $F_g$ whose boundary is identified with $S^1$,
and we also fix a base point $\star$ on $\partial F_g=S^1$.
The morphisms $g_- \to g_+$ in the category $\Cob$ are the equivalence classes of cobordisms between the surfaces $F_{g_-}$ and $F_{g_+}$.
To be more specific, a \emph{cobordism} from $F_{g_-}$ to $F_{g_+}$ is a pair $(M,m)$ consisting of  a compact connected oriented $3$-manifold $M$
and  an orientation-preserving homeomorphism $m: F({g_-},{g_+}) \to \partial M$ where 
$$
F({g_-},{g_+}) := {-F_{g_-} \cup_{S^1 \times \{-1\}} \big( S^1 \times [-1,1] \big) \cup_{S^1 \times \{1\}} F_{g_+} };
$$
two such pairs $(M, m)$ and $(M^\prime, m^\prime)$ are \emph{equivalent} if there exists a homeomorphism 
$f: M  \to M^\prime$ such that $m^\prime  = f|_{\partial M} \circ m$.  
We shall  denote a pair $(M,m)$ simply by the upper-case letter $M$,  with the convention that the boundary-parametrization is always denoted by the lower-case letter $m$; 
besides, we denote by  $m_\pm: F_{g_\pm} \to M$ the restriction of $m$ composed with the inclusion of $\partial M$ into $M$. 
Thus   the cobordism $M$ ``runs'' from the \emph{bottom surface} $\partial_- M:= m_-(F_{g_-})$ to the  \emph{top surface} $\partial_+ M:= m_+(F_{g_+})$. 
The \emph{degree} of  the cobordism $M$ is the integer $g_+-g_-$.  

The composition $N\circ M$ of two cobordisms $M,N$  in $\Cob$ is defined by identifying $\partial_+ M$ to $\partial_- N$ and,
for any integer $g\geq 0$, the identity of the object $g$  is the cylinder $F_g \times [-1,1]$ with the obvious boundary-parametrization.
Our model surfaces $F_0,F_1,F_2,\dots$ also come with an identification of the boundary-connected sum 
$F_g\sharp_\partial F_h$ with the surface $F_{g+h}$ for any  $g,h\geq 0$. 
Thus the category $\Cob$ is enriched with a monoidal structure~$\otimes$:
the tensor product $g\otimes h$ of two integers $g,h$ is the sum $g+h$,
and the tensor product $M \otimes N$ of two cobordisms $M,N$ is their boundary-connected sum $M \sharp_\partial  N$.

Let now $G$ be an abelian group.
The category $\Cob$ can be refined to the category $\Cob_G$ of cobordisms equipped with a representation of the first integral homology group in $G$. 
To be more specific, an object of $\Cob_G$  is a pair $(g,\varphi)$ consisting of an integer $g\geq 0$ and a group homomorphism  $\varphi : H_1(F_g;\Z) \to G$. 
A morphism $(g_-,\varphi_-) \to (g_+,\varphi_+)$ in the category $\Cob_G$ is a pair $(M,\varphi)$ where $M\in \Cob(g_-,g_+)$
and $\varphi : H_1(M;\Z) \to G$ is a group homomorphism such that $\varphi\circ m_{\pm,*}=\varphi_{\pm}$. 
The composition of two morphisms $(M,\varphi)\in \Cob_G((g_-,\varphi_-),(g_+,\varphi_+))$ 
and $(N,\psi)\in \Cob_G((h_-,\psi_-),(h_+,\psi_+))$, such that $(g_+,\varphi_+)= (h_-,\psi_-)$, is defined by
$$
(N,\psi) \circ (M,\varphi)   := (N \circ M,  \psi+\varphi)
$$
where $N\circ M$ is the composition in $\Cob$ and $\psi + \varphi: H_1(N\circ M;\Z) \to G$ is defined from $\varphi$ and $\psi$ by using the Mayer--Vietoris theorem. 
The monoidal structure of $\Cob$ also extends to the category $\Cob_G$: the tensor product of objects is 
$$
(g,\varphi) \otimes (h,\psi) := (g+h,\varphi \oplus \psi)
$$
where $H_1(F_{g+h};\Z)= H_1(F_g \sharp_\partial F_h;\Z)$ is identified with $H_1(F_g;\Z)\oplus H_1(F_h;\Z)$, 
and the tensor product of morphisms is 
$$
(M,\varphi) \otimes (N,\psi) :=  (M \sharp_\partial N, \varphi \oplus \psi)
$$
where $H_1(M \sharp_\partial N;\Z)$ is identified with $H_1(M;\Z)\oplus H_1(N;\Z)$.
 
Consider now  a commutative ring  $R$ and fix a subgroup $G \subset R^\times$  of its group of units.
Let $\PMod_{R,\pm G}$ be the category whose objects are $\Z$-graded $R$-modules 
and whose morphisms are  graded $R$-linear maps of arbitrary degree, up to multiplication by an element of $\pm G$. 
The usual tensor product  of graded $R$-modules defines  a monoidal structure on the category $\PMod_{R,\pm G}$:
here the tensor product $a \otimes b$ of two graded $R$-linear maps $a:U \to U'$ and $b:V\to V'$ is defined with Koszul's rule,
i.e$.$   we set $(a\otimes b)(u\otimes v) := (-1)^{\vert b \vert \vert u \vert} a(u) \otimes b(v)$ for any homogeneous elements $u\in U,v\in V$.
In this paper, we construct and study  two  functors from $\Cob_G$ to $\PMod_{R,\pm G}$ for some specific rings $R$ and specific subgroups $G\subset R^\times$.

   Our first functor is based on the ``Alexander function'' introduced by Lescop \cite{Lescop}.
For any  compact orientable $3$-manifold $M$ with boundary, this function is defined on an exterior power of the Alexander module of $M$ relative to a boundary point,
and it  takes values in a ring of Laurent polynomials. Lescop's definition proceeds in a rather elementary way using a presentation of the Alexander module.   \\[-0.2cm]

\noindent
\textbf{Theorem I.}
{\it  Let $G$ be a finitely generated free abelian group, and let $\Z[G]$ be its group ring. Then there is a   degree-preserving   monoidal functor\\[-0.3cm] 
$$
\Alex :=  \Alex_{G} :  \Cob_G \longrightarrow \PMod_{\Z[G],\pm G}
$$
which, at the level of objects, assigns to any  $(g,\varphi)$ 
the exterior algebra of the  $\varphi$-twisted relative homology group of the pair $(F_g,\star)$.}\\[-0.1cm]

\noindent
The $\Z[G]$-linear map $\Alex(M,\varphi)$ associated to  a morphism $(M,\varphi)$ of $\Cob_G$
is defined in a very simple way from the Alexander function of $M$ using the decomposition of $\partial M$ into two parts, $\partial_- M$ and $\partial_+ M$.
The fact that the Alexander function gives rise to a functor on the category of cobordisms is somehow implicit in \cite{Lescop},
where  Lescop studies the behaviour of her invariant under some specific gluing operations.
  As it contains the Alexander polynomial of knots in a natural way, we call $\Alex$ the \emph{Alexander functor}.

Since the  works of Milnor \cite{Milnor_duality} and Turaev \cite{Turaev_Alexander}, it is known that the Alexander polynomial of knots 
and $3$-manifolds can be interpreted as a special kind of abelian Reidemeister torsion.
We follow this direction to define our second functor, which we call the \emph{Reidemeister functor}.
In the sequel, the category $\PMod_{R,\pm G}$ associated to a field $R:=\F$ and a subgroup $G $ of $\F^\times = \F \setminus \{0\}$ is denoted by $\PVect_{\F,\pm G}$.\\[-0.2cm]

\noindent
\textbf{Theorem II.}
{\it  Let $\F$ be a field and let $G$ be a  subgroup of $\F^\times$. Then there  is a   degree-preserving   monoidal functor\\[-0.3cm] 
$$
\Reid :=\Reid_{\F, G} :  \Cob_{G} \longrightarrow \PVect_{\F,\pm G}
$$
which, at the level of objects, assigns to any  $(g,\varphi)$ 
the exterior algebra of the $\varphi$-twisted relative homology group of the pair $(F_g,\star)$.}\\[-0.1cm]

\noindent
The construction of the functor $\Reid$ uses the elementary theory of Reidemeister  torsions,
but note that we need to consider cell chain complexes which are \emph{not} necessarily  acyclic.  
When $G$ is a finitely generated free abelian group and $\F:=Q(G)$ is the field of fractions of $\Z[G]$, we recover the functor $\Alex$ by extension of scalars. 
Thus it suffices to study the functor $\Reid$ and this is done using  basic properties of combinatorial torsions.
For instance, we compute its restriction to the monoid of homology cobordisms (which includes the mapping class group of a surface):
we find that the representation induced by $\Reid$ is  equivalent to the Magnus representation 
combined with the Reidemeister torsion of  cobordisms relative to the top surface. 
We also give a formula for $\Reid$ in terms of Heegaard splittings 
and we show that $\Reid$ satisfies some duality properties, which generalize the symmetry properties of the  Alexander polynomial of knots and $3$-manifolds.

It is expected that Turaev's refinements of the Reidemeister torsion \cite{Turaev_RTKT,Turaev_Euler} can be adapted to refine $\Reid$ to a kind of ``monoidal'' degree-preserving functor 
from $\Cob_G$ to the category $\mathsf{grVect}_{\F}$ of graded $\F$-vector spaces:   the sign ambiguity would presumably be fixed using homological orientations on the manifolds, 
while the ambiguity in $G$ would be fixed by adding Euler structures.
(Observe however that, since we use  Koszul's rule and we allow morphisms in $\mathsf{grVect}_{\F}$ to have non-zero degree, 
this category is not monoidal in the usual sense of the word.)

We now explain how our constructions are related to prior works. Soon after the emergence of quantum invariants of $3$-manifolds in the late eighties, 
there have been several works which showed how to interpret the classical Alexander polynomial  in this new framework.
A more general problem was then to extend the Alexander polynomial to a functor from a category of cobordisms to a category of vector spaces
following, as close as possible, the axioms of a TQFT  \cite{Atiyah}. This problem has been solved by Frohman and Nicas who used 
elementary intersection theory in $\hbox{U}(1)$-representation varieties of surfaces \cite{FN_U(1)}. 
(See also \cite{FN_PU(N)} for a much more general  construction using $\hbox{PU}(N)$-representations.)
Later, Kerler showed that the Frohman--Nicas functor is in fact equivalent to a TQFT based on a certain  quasitriangular Hopf algebra \cite{Kerler_HQFT}.
The Alexander polynomial of a knot $K$ in an integral homology $3$-sphere $N$ is recovered from this functor
 by taking the ``graded'' trace of the endomorphism associated  to the cobordism that one obtains by ``cutting'' $N \setminus K$ along a Seifert surface of $K$. 
It turns out that, in the case $G=\{1\}$,   the Alexander functor $\Alex$ is equivalent to the Frohman--Nicas functor.
Note that the way how their functor determines the Alexander polynomial is somehow \emph{extrinsic}, in that it goes through  the choice of a Seifert surface.
On the contrary, the  functor $\Alex$ for $G=\Z$   \emph{intrinsically} contains   the Alexander polynomial of oriented  knots in oriented integral homology $3$-spheres
by considering any  knot of this type as a ``bottom knot'' in the style of \cite{Habiro}, i.e$.$ by regarding  its exterior as a morphism $1\to 0$ in $\Cob_G$.
Since this functorial extension of the Alexander polynomial applies to cobordisms $M$ equipped with an element of $H^1(M;\Z)$,
it should be regarded as a kind of HQFT with target  $\operatorname{K}(\Z,1)$ -- see \cite{Turaev_HQFT} -- rather than a TQFT.

Our constructions are also related  to the work of Bigelow, Cattabriga and the first author  \cite{BCF}, 
which provides a functorial extension of the Alexander polynomial to the category of tangles {instead of} the category of cobordisms.
To describe this relation, let $\TCob$ be the  monoidal category  whose objects are pairs of non-negative integers $(g,n)$ --
corresponding to surfaces $F_g$ with $n$ punctures -- and whose morphisms are cobordisms with tangles inside. 
Clearly the  category $\TCob$ contains  the category $\Cob$ of \cite{CY}  as well as the usual  category $\Tang$ of (unoriented) tangles  in the standard ball;
for any abelian group $G$, there is an obvious refinement $\TCob_G$ of the category $\TCob$.
When $G$ is the infinite cyclic group generated by~$t$, the usual category $\Tang_+$ of oriented tangles in the standard ball can be regarded as a subcategory of $\TCob_G$
by only considering those representations of tangle exteriors that send any oriented meridian to the generator~$t$. 
The functors $\Alex$ and $\Reid$  constructed in this paper  
could be extended to the category $\TCob_G$ using similar methods,  but with more technicality.  
When $G$ is infinite cyclic, the restriction  of the resulting functor $\Alex:\TCob_G \to \PMod_{\Z[G],\pm G}$  to  $\Tang_+$
would coincide with the ``Alexander representation of tangles'' constructed in  \cite{BCF}.
We also mention Archibald's extension of the Alexander polynomial \cite{Archibald}, which is based on diagrammatic presentations of tangles:
her invariant seems to be very close to the  invariant constructed in \cite{BCF} and it is stronger since it is defined \emph{without} ambiguity in $\pm G$.

Finally, our approach is  related to the work of Cimasoni and Turaev on ``Lagrangian representations of tangles'' \cite{CT1,CT2}.
These representations are functors from the category $\Tang_+$ to the category of ``Lagrangian relations'' 
(which generalizes the category of $\Z[t^{\pm 1}]$-modules equipped with non-degenerate skew-hermitian forms)
and, for string links, they are equivalent to the (reduced) Burau representation \cite{LD,KLW}.
The constructions of \cite{CT1,CT2} could be adapted to the case of cobordisms in order to obtain a functor 
from  $\Cob_G$ to the category of ``Lagrangian relations'' over the ring~$\Z[G]$.
In the case of homology cobordisms,  the resulting functor   would be  equivalent to the (reduced) Magnus representation 
but it would miss the relative Reidemeister torsion: so it would be weaker than the functor $\Alex$.

The paper is organized as follows. A first part deals exclusively with the Alexander functor: 
\S \ref{sec:Alexander_functor} gives the construction of the functor $\Alex$ (Theorem I)
and \S \ref{sec:A_knots} explains how  the classical Alexander polynomial of knots is contained in $\Alex$.
Next, the Reidemeister functor is constructed in \S \ref{sec:Reidemeister_functor} (Theorem II) and it is proved to be a generalization of  $\Alex$ in \S \ref{sec:back_to_A}.
(Thus, we provide two different proofs of the functoriality of $\Alex$.)
Starting from there, we focus  on the study of  $\Reid$ and indicate the resulting properties for $\Alex$.
The abelian Reidemeister torsions  of knot exteriors and closed $3$-manifolds are shown to be determined by $\Reid$ in \S \ref{sec:R_knots}.
The functor~$\Reid$ restricts to a projective representation of the monoid of homology cobordisms, which we fully compute  in \S \ref{sec:homology_cobordisms}.
We also explain in \S \ref{sec:Heegaard} how to calculate $\Reid$ using Heegaard splittings of cobordisms, 
and we prove in \S \ref{sec:duality} a duality result for $\Reid$ which involves the twisted intersection form of surfaces.
Finally, the paper ends with a short appendix recalling the definition and basic properties of the torsion of chain complexes.\\
 
\noindent
\textbf{Notation and conventions.}
Let $R$ be a commutative ring. The exterior algebra of an $R$-module $N$ is denoted by 
$$
\Lambda N= \bigoplus_{i\geq 0} \Lambda^i N \quad \hbox{where } \Lambda^0 N = R ;
$$
 the multivector $v_1 \wedge \cdots \wedge v_i \in \Lambda^i N$ defined by a finite family  $v=(v_1,\dots,v_i)$ of elements of  $N$ is still denoted by $v$. 
 If $N$ is free of rank $n$,  a \emph{volume form} on $N$ is an isomorphism of $R$-modules  $\Lambda^n N \to R$.

Let $X$ be a topological space with base point $\star$.
The maximal abelian cover of $X$  based at $\star$ 
 is denoted by   $p_X:\widehat{X}\to X$, and the preferred lift of $\star$ is denoted by $\widehat{\star}$.
 (Here we assume  the appropriate assumptions on $X$ to have a universal cover.)
For any oriented loop $\alpha$ in $X$ based at $\star$, 
the unique lift of $\alpha$ to $\widehat X$ starting at $\widehat \star$ is denoted by $\widehat \alpha$.

Unless otherwise specified, (co)homology groups are taken with coefficients in the ring of integers $\Z$;
(co)homology classes are denoted with square brackets $[-]$.
For any subspace $Y \subset X$ such that $\star \in Y$ and any ring homomorphism $\varphi : \Z[H_1(X)] \to R$,  
we denote by $H^{\varphi}(X,Y)$ the \emph{$\varphi$-twisted homology} of the pair $(X,Y)$, namely
$$
H^{\varphi}(X,Y) = H(C^\varphi(X,Y))
\quad  \hbox{where} \ C^\varphi(X,Y) :=  R \otimes_{\Z[H_1(X)]} C\big(\widehat{X},p_X^{-1}(Y)\big).
$$
If $(X',Y')$ is another  pair of spaces  and $f:(X',Y') \to (X,Y)$ is a continuous map, 
the corresponding homomorphism $H(X') \to H(X)$ is still denoted by $f$.
If a base point $\star'\in Y'$ is given and $f(\star')=\star$, 
the $R$-linear map $H^{\varphi f}(X',Y') \to H^{\varphi }(X,Y)$ induced by $f$ is also denoted by $f$.\\

\noindent
\textbf{Acknowledgements.} This work was partially supported by the French ANR research project ``Interlow'' (ANR-09-JCJC-0097-01). 
The authors would like to thank the referee for some useful comments.

\section{The Alexander functor $\Alex$} \label{sec:Alexander_functor}

We firstly review  the Alexander function of a  $3$-manifold with boundary following~\cite{Lescop}.
(Note that the terminology ``Alexander function'' has a very different meaning in \cite{Turaev_RTKT}.)
Next, we construct the Alexander functor $\Alex$. In this section, we fix a finitely generated free abelian group $G$;
the extension of a group homomorphism $\varphi:A \to G$ to a ring homomorphism $\Z[A]\to \Z[G]$ is still denoted by $\varphi$.

\subsection{The Alexander function} \label{subsec:Alexander_function}

Let $M$ be a compact connected orientable $3$-manifold with  connected boundary.
We fix a base point $\star \in \partial M$ and a group homomorphism $\varphi : H_1(M) \rightarrow G$.
The \emph{genus} of $M$ is the integer $g(M) := 1 - \chi(M)$, i.e$.$ the genus of the surface $\partial M$.

\begin{lemma} \label{lem:pres}
There exists a presentation of the $\Z[G]$-module $H_1^\varphi(M,\star)$ whose deficiency is $g(M)$.
\end{lemma}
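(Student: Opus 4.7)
The plan is to exhibit such a presentation directly from a carefully chosen CW structure on $M$. Since $M$ is a compact connected orientable $3$-manifold with non-empty (connected) boundary, it is well known that $M$ collapses onto a $2$-dimensional subcomplex (a spine), so up to simple homotopy equivalence we may replace $M$ by a finite $2$-dimensional CW complex $X$. Since $M$ is connected, we may further arrange that $X$ has a single $0$-cell, which we place at $\star$. Denote by $c_1$ and $c_2$ the numbers of $1$- and $2$-cells of $X$, respectively.

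Next I would compute the Euler characteristic and use it to relate $c_1 - c_2$ to $g(M)$. On the one hand $\chi(M) = \chi(X) = 1 - c_1 + c_2$, and on the other hand $g(M) = 1 - \chi(M)$ by definition, so
$$
c_1 - c_2 \; = \; 1 - \chi(M) \; = \; g(M).
$$

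Now I would lift this CW structure to the maximal abelian cover $\widehat{X} \to X$ and consider the $\Z[G]$-cellular chain complex of $(\widehat X, p_X^{-1}(\star))$ tensored down to $\Z[G]$ via $\varphi$. Since the only $0$-cell is $\star$ itself, the relative complex $C^\varphi(X,\star)$ lives in degrees $1$ and $2$, and takes the form
$$
0 \longrightarrow \Z[G]^{c_2} \xrightarrow{\ \partial_2\ } \Z[G]^{c_1} \longrightarrow 0.
$$
Because there are no chains in degree $0$, the differential $\partial_1$ vanishes, and therefore
$$
H_1^\varphi(M,\star) \;=\; H_1^\varphi(X,\star) \;=\; \Z[G]^{c_1}/\partial_2\bigl(\Z[G]^{c_2}\bigr).
$$
This is, tautologically, a presentation of $H_1^\varphi(M,\star)$ with $c_1$ generators and $c_2$ relations, whose deficiency is $c_1 - c_2 = g(M)$.

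The only point where real care is needed is the first step, namely the existence of a $2$-dimensional CW model with a single $0$-cell at $\star$. The existence of a spine of dimension $\leq 2$ for a compact $3$-manifold with non-empty boundary is classical, and reducing to one $0$-cell is a routine collapsing argument using the connectedness of $M$; so this is the main (minor) obstacle, after which the proof is a direct Euler-characteristic count.
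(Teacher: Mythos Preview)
Your proof is correct and follows essentially the same approach as the paper: reduce to a $2$-dimensional CW complex with a single $0$-cell at $\star$, then read off the presentation from the cellular chain complex $C^\varphi(X,\star)$ and compute the deficiency via an Euler-characteristic count. The only cosmetic difference is that the paper obtains $X$ from a handle decomposition of $M$ (one $0$-handle, $s$ $1$-handles, $r$ $2$-handles, no $3$-handles) rather than via a spine, but the resulting cell complex and the deficiency computation $s-r=1-\chi(M)=g(M)$ are the same.
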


\begin{proof}
We consider a decomposition of $M$ with a single $0$-handle, $s$ $1$-handles and $r$ $2$-handles.
Since the boundary of $M$ has genus $g(M)$, we have $s-r=g(M)$. This handle decomposition defines
a $2$-dimensional complex $X\subset M$ onto which $M$ deformation retracts. The complex $X$
has a single $0$-cell (which we assume to be  $\star$), $s$ $1$-cells and $r$ $2$-cells. 
Thus we obtain a presentation of the $\Z[G]$-module $H_1^\varphi(M,\star)\simeq H_1^\varphi(X,\star)$ with $s$ generators and $r$ relations.
\end{proof}

We now simplify our notation by setting $g:=g(M)$ and $H:= H_1^\varphi(M,\star)$.

\begin{definition}[Lescop \cite{Lescop}]
Consider a presentation of the $\Z[G]$-module $H$ with deficiency $g$:
\begin{equation}\label{eq:presentation_H}
H = \langle \gamma_1,\dots, \gamma_{g+r}\, \vert\, \rho_1,\dots,\rho_r \rangle.
\end{equation}
Let $\Gamma$ be the $\Z[G]$-module freely generated by the symbols $\gamma_1,\dots, \gamma_{g+r}$,
and regard $\rho_1,\dots,\rho_r$  as elements of $\Gamma$.
Then the  \emph{Alexander function}  of $M$ with coefficients $\varphi$ is the $\Z[G]$-linear map
$
\mathcal{A}_M^\varphi: \Lambda^g H \to \Z[G]
$
defined by 
$$ 
\mathcal{A}_M^\varphi(u_1\wedge \cdots \wedge u_g) \cdot \gamma_1\wedge \cdots \wedge  \gamma_{g+r}= 
\rho_1\wedge \cdots \wedge \rho_r \wedge \widetilde{u_1} \wedge \cdots \wedge \widetilde{u_g} \ \in \Lambda^{g+r} \Gamma
$$
for any  $u_1,\dots, u_g \in H$,  which we lift to some $\widetilde{u_1}, \dots, \widetilde{u_g} \in \Gamma$ in an arbitrary way.
\end{definition}

The map $\mathcal{A}_M^\varphi$  can be concretely computed as follows:
 if one considers the $r\times (g+r)$ matrix defined by the presentation \eqref{eq:presentation_H} of $H$,
and if one adjoins to this matrix some row vectors giving $u_1,\dots,u_g$ in the generators $\gamma_1,\dots,\gamma_{g+r}$,
then $\mathcal{A}_M^\varphi(u_1\wedge \cdots \wedge u_g)$ is the determinant of the resulting $(g+r) \times (g+r)$ matrix.
It is shown in \cite[\S 3.1]{Lescop} that, up to multiplication by a unit of $\Z[G]$ (i.e., an element of $\pm G$), 
the map $\mathcal{A}_M^\varphi$ does not depend on the choice of the presentation \eqref{eq:presentation_H}.
  
Let $Q(G)$ be the field of fractions of $\Z[G]$.
The following lemma, which is implicit in \cite{Lescop},
shows that  either the Alexander function is trivial or it induces by extension of scalars a volume form on  $H_Q:=  Q(G) \otimes_{\Z[G]} H$. 

\begin{lemma} \label{lem:nullity}
  We have $\dim H_Q \geq g$, and  $\mathcal{A}_M^\varphi \neq 0$  if and only if $\dim H_Q =  g$. 
\end{lemma}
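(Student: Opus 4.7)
The plan is to reduce everything to linear algebra over the field $Q(G)$ by tensoring the deficiency-$g$ presentation \eqref{eq:presentation_H} of $H$ with $Q(G)$. Set $\Gamma_Q := Q(G)\otimes_{\Z[G]} \Gamma$, so that $\Gamma_Q$ is a $Q(G)$-vector space of dimension $g+r$ with basis $\gamma_1,\dots,\gamma_{g+r}$, and the relations $\rho_1,\dots,\rho_r \in \Gamma$ induce vectors (still denoted $\rho_i$) in $\Gamma_Q$. Then $H_Q$ is the cokernel of the $Q(G)$-linear map $Q(G)^r \to \Gamma_Q$ sending the $i$th standard basis vector to $\rho_i$.

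First I would derive the inequality $\dim H_Q \geq g$ by the rank--nullity theorem: if $\rho$ is the presentation map above, then $\dim H_Q = (g+r) - \operatorname{rank}(\rho) \geq (g+r) - r = g$, with equality if and only if $\operatorname{rank}(\rho)=r$, i.e$.$ if and only if the relations $\rho_1,\dots,\rho_r$ are $Q(G)$-linearly independent in $\Gamma_Q$.

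Next I would translate the nonvanishing of $\mathcal{A}_M^\varphi$ into the same condition. Since $\mathcal{A}_M^\varphi$ is $\Z[G]$-linear and $\Z[G] \hookrightarrow Q(G)$, the map $\mathcal{A}_M^\varphi$ is nonzero if and only if its $Q(G)$-linear extension $\mathcal{A}_M^\varphi \otimes Q(G) : \Lambda^g H_Q \to Q(G)$ is nonzero. Chasing the definition, this extension sends $u_1 \wedge \cdots \wedge u_g$ (with arbitrary lifts $\widetilde{u_i} \in \Gamma_Q$) to the unique scalar $\lambda \in Q(G)$ satisfying $\rho_1 \wedge \cdots \wedge \rho_r \wedge \widetilde{u_1} \wedge \cdots \wedge \widetilde{u_g} = \lambda \cdot \gamma_1\wedge\cdots\wedge\gamma_{g+r}$ in the line $\Lambda^{g+r}\Gamma_Q$. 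Such a $\lambda$ is nonzero for some choice of $u_1,\dots,u_g \in H$ if and only if $\rho_1 \wedge \cdots \wedge \rho_r \neq 0$ in $\Lambda^r \Gamma_Q$: indeed, if $\rho_1 \wedge \cdots \wedge \rho_r \neq 0$ we can complete the $\rho_i$'s to a basis of $\Gamma_Q$ and take the $\widetilde{u_i}$'s to be the remaining basis vectors, while conversely $\rho_1\wedge\cdots\wedge\rho_r=0$ forces every such wedge to vanish.

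Finally I would observe that $\rho_1 \wedge \cdots \wedge \rho_r \neq 0$ in $\Lambda^r \Gamma_Q$ is precisely the condition that $\rho_1,\dots,\rho_r$ be $Q(G)$-linearly independent, i.e$.$ $\operatorname{rank}(\rho)=r$, i.e$.$ $\dim H_Q = g$. Combining the two equivalences yields the lemma. There is no real obstacle here beyond bookkeeping; the only mild point to verify is the compatibility between the $\Z[G]$-valued definition of $\mathcal{A}_M^\varphi$ and its extension to $Q(G)$, which follows from the explicit determinantal formula recalled after the definition and the independence (up to $\pm G$) of $\mathcal{A}_M^\varphi$ from the chosen presentation.
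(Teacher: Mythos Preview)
Your proof is correct and follows essentially the same approach as the paper's own proof: both reduce to the observation that $\dim H_Q = (g+r) - \operatorname{rank}(\rho)$ and that $\mathcal{A}_M^\varphi \neq 0$ is equivalent to the $r$ relation vectors being linearly independent over $Q(G)$. The only cosmetic difference is that the paper phrases the latter equivalence in terms of nonvanishing $r\times r$ minors of the presentation matrix (and picks the $\gamma_{i_j}$'s from the complementary columns), while you phrase it as $\rho_1\wedge\cdots\wedge\rho_r \neq 0$ in $\Lambda^r\Gamma_Q$ and complete to a basis---these are the same fact.
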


\begin{proof}
Let $A$ be the $r\times (g+r)$ matrix with entries in $\Z[G]$ corresponding to the presentation \eqref{eq:presentation_H} of the $\Z[G]$-module $H$.
The multiplication $v \mapsto v A$ defines a linear map $Q(G)^{r} \to Q(G)^{g+r}$ whose cokernel  is $H_Q $.
Therefore
$$
 \dim H_Q =  (g+r) - \rank A.
$$
Clearly, we have $\rank A\leq r$ so that $  \dim H_Q \geq g$. 

Assume  that $ \dim H_Q > g$ and let $A'$ be a matrix obtained by adding $g$ arbitrary rows to $A$.
Then $\rank A < r$ so that all the minors of $A$ of order $r$ vanish. 
By expanding the determinant of $A'$ successively along the last $g$ rows, we see that $\det A'=0$
and deduce that $\mathcal{A}_M^\varphi=0$. 

  Assume that $ \dim H_Q  =g$. Then $\rank A=r$ so that $A$ has a non-zero minor $D$ of order $r$.
Let $1\leq i_1 <\cdots <i_g \leq g+r$ be the indices of the columns of $A$ not pertaining to $D$.
Then $\mathcal{A}_M^\varphi(\gamma_{i_1} \wedge \cdots \wedge \gamma_{i_g}) =D \neq 0$. 
\end{proof}

\subsection{Definition of $\Alex$} \label{subsec:Alexander_functor}

In order to define a functor $\Alex$, we associate to any object $(g,\varphi)$ of $\Cob_G$  the exterior algebra
$$
\Alex(g,\varphi) :=  \Lambda\, H_1^\varphi(F_g,\star)
$$
of the $\Z[G]$-module  $H^\varphi(F_g,\star)= H_1^\varphi(F_g,\star)$, which is  free of rank $2g$.
Next, we  associate to any morphism $(M,\varphi)\in \Cob_G\big(({g_-},\varphi_-),({g_+},\varphi_+)\big)$ a $\Z[G]$-linear map 
$$ 
\Alex(M,\varphi) : \Lambda\,  H_1^{\varphi_-}(F_{g_-},\star) \longrightarrow \Lambda\,  H_1^{\varphi_+}(F_{g_+},\star)
$$
of degree  $\delta\!g:=g_+-g_-$ as follows.
We denote by $I$ the interval $m(\star\times [-1,1])$, which connects the base point of the bottom surface $\partial_- M$ to that of the top surface $\partial_+ M$.
We set $H:=H_1^\varphi(M,I)$, $H_\pm:=H_1^{\varphi_\pm}(F_{g_\pm},\star)$ and $g:=g_++g_-$.
Then, for any integer $j\geq 0$, the image $\Alex(M,\varphi)(x)\in \Lambda^{j+ \delta\!g}H_+ $ of any  $x \in \Lambda^j H_-$ is defined by the following property:
$$
\forall y \in \Lambda^{g - j} H_+, \
\mathcal{A}_M^\varphi \left( \Lambda^j m_-(x) \wedge \Lambda^{g-j} m_+(y) \right) =  \vol  \big( \Alex(M,\varphi)(x) \wedge y\big).
$$
Here $\vol:\Lambda^{2g_+} H_+  \to \Z[G]$ is an arbitrary {volume form} on  $H_+$.
Due to the choices of $\vol$ and of the presentation of $H$, the map  $\Alex(M,\varphi)$   is only defined up to multiplication by an element of $\pm G$.
Besides, observe that $A(M,\varphi)$ is trivial on $ \Lambda^j H_-$ for any  $j<\max(0,-\delta\!g)$  and any $j>\min(g,2g_-)$.

The next two lemmas show that the above paragraph
defines a monoidal functor $\Alex$ from $\Cob_G$ to  $\PMod_{\Z[G],\pm G}$, which proves Theorem  I of the Introduction.
The first lemma is related  to  Property 6 of the Alexander function in \cite{Lescop},
  while the second lemma seems to be new.  

\begin{lemma} \label{lem:monoidality}
For any morphisms $(M,\varphi)\in \Cob_G((g_-,\varphi_-),(g_+,\varphi_+))$  and $(N,\psi)\in \Cob_G((h_-,\psi_-),(h_+,\psi_+))$,  we have
\begin{equation}\label{eq:monoidality}
\Alex\big( (M,\varphi) \otimes (N,\psi)\big) = \Alex(M,\varphi) \otimes \Alex(N,\psi).
\end{equation}
\end{lemma}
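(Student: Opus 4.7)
The plan is to deduce the monoidality equation \eqref{eq:monoidality} from a multiplicativity property of the Alexander function under boundary-connected sum:
$$\mathcal{A}_{M \sharp_\partial N}^{\varphi \oplus \psi}(a \wedge b) \;=\; \pm\, \mathcal{A}_M^\varphi(a)\cdot \mathcal{A}_N^\psi(b)$$
for $a \in \Lambda^{g_M} H_1^\varphi(M,I)$ and $b \in \Lambda^{g_N} H_1^\psi(N,I)$, where $g_M:=g_-+g_+$ and $g_N:=h_-+h_+$.

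To establish this product formula, I first apply the Mayer--Vietoris theorem to the boundary-connected sum: the intersection of $M$ and $N$ inside $M \sharp_\partial N$ is a $3$-ball containing the interval~$I$, so its $\varphi\oplus\psi$-twisted relative homology modulo $I$ vanishes. This yields a natural decomposition
$$H_1^{\varphi\oplus\psi}(M\sharp_\partial N, I) \;\cong\; H_1^\varphi(M,I)\oplus H_1^\psi(N,I),$$
together with the analogous decompositions for the boundary surfaces $F_{g_\pm+h_\pm}=F_{g_\pm}\sharp_\partial F_{h_\pm}$ relative to~$\star$, compatible with the parametrizations~$m_\pm$. Choosing handle decompositions of $M$ and $N$ as in Lemma~\ref{lem:pres} and amalgamating their $0$-handles produces a handle decomposition of $M\sharp_\partial N$ whose associated presentation matrix is the block-diagonal matrix $\operatorname{diag}(A_M,A_N)$. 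The product formula then follows by expanding the determinant defining $\mathcal{A}_{M\sharp_\partial N}^{\varphi\oplus\psi}(a\wedge b)$ along the two evident blocks of rows.

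With the product formula in hand, the conclusion is a direct computation. Writing homogeneous tensors $x=x_M\otimes x_N$ and $y=y_M\otimes y_N$ in the decompositions of the exterior algebras associated to the bottom and top surfaces, and choosing the volume form on $H_1^{\varphi_+\oplus\psi_+}(F_{g_++h_+},\star)$ to be the wedge of chosen volume forms on its two summands, both sides of \eqref{eq:monoidality} reduce, after application of the product formula, to
$$\vol_M\!\bigl(\Alex(M,\varphi)(x_M)\wedge y_M\bigr)\cdot \vol_N\!\bigl(\Alex(N,\psi)(x_N)\wedge y_N\bigr).$$

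The main obstacle is the bookkeeping of Koszul signs, which arise both from the definition of the tensor product of morphisms in $\PMod_{\Z[G],\pm G}$ and from rearranging wedges of the form $m_-^M(x_M)\wedge m_-^N(x_N)\wedge m_+^M(y_M)\wedge m_+^N(y_N)$ so as to separate the $M$- and $N$-factors before applying the product formula. Since the target category remembers morphisms only up to multiplication by $\pm G$ and all these signs lie in $\{\pm 1\}\subset \pm G$, they automatically disappear in $\PMod_{\Z[G],\pm G}$.
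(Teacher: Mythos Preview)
Your overall strategy matches the paper's: use Mayer--Vietoris to obtain $H_1^{\varphi\oplus\psi}(M\sharp_\partial N,I)\cong H_1^\varphi(M,I)\oplus H_1^\psi(N,I)$, combine presentations of the two summands into a block-diagonal presentation, and deduce a product formula for the Alexander function. (One small slip: the intersection of $M$ and $N$ in the boundary-connected sum is a $2$-disk in the boundary, not a $3$-ball.)

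However, the final paragraph contains a genuine gap. You write that the Koszul and rearrangement signs ``automatically disappear in $\PMod_{\Z[G],\pm G}$'' because they lie in $\{\pm 1\}$. This is not how the target category works: two graded maps are equal in $\PMod_{\Z[G],\pm G}$ only if they differ by a \emph{single global} element of $\pm G$, the same in every degree. The signs you are discarding depend on the degrees $i=\lvert x_M\rvert$, $j=\lvert x_N\rvert$, $p=\lvert y_M\rvert$, $q=\lvert y_N\rvert$: rearranging $m_-(x_M)\wedge n_-(x_N)\wedge m_+(y_M)\wedge n_+(y_N)$ into $M$- and $N$-blocks costs $(-1)^{jp}$, the block-diagonal determinant expansion costs a sign involving the numbers of relations and $i,j,p$, and the Koszul rule contributes $(-1)^{i\,\delta h}$. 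Until you verify that these combine to a sign depending only on the fixed data $g,h$ and the chosen presentations (and not on $i,j,p,q$), you have not established \eqref{eq:monoidality}.

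The paper does exactly this verification: after the computation the accumulated sign is $(-1)^{g(s+h)}$, where $s$ is the number of generators in the chosen presentation of $H_1^\psi(N,I)$; this is visibly independent of $x$ and $y$. You should carry out the same bookkeeping rather than appealing to the $\pm G$-indeterminacy.
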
 

\begin{proof}  
We set $g:=g_++g_-$, $h:=h_++h_-$, $\delta\!g:=g_+-g_-$, $\delta\!h:=h_+-h_-$ and  
$$
H_\pm^M:=H_1^{\varphi_\pm}(F_{g_\pm},\star), \ H_\pm^N:=H_1^{\psi_\pm}(F_{h_\pm},\star), \
H_\pm:=H_1^{\varphi_\pm \oplus \psi_\pm}(F_{g_\pm+ h_\pm},\star),
$$
$$
H^M := H_1^\varphi(M,I), \quad H^N:= H_1^\psi(N,I), \quad H:= H_1^{\varphi \oplus \psi}(M \sharp_\partial N, I).
$$
In the statement of the lemma and in the proof below, we identify
$$
\Alex\big((g_\pm,\varphi_\pm) \otimes (h_\pm,\psi_\pm) \big) = \Alex(g_\pm+h_\pm,\varphi_\pm \oplus \psi_\pm) 
= \Lambda H_\pm =   \Lambda  \big( H_\pm^M \oplus H_\pm^N \big) 
$$
in the obvious way with 
$$
\Lambda H_\pm^M \otimes   \Lambda H_\pm^N= \Alex(g_\pm,\varphi_\pm) \otimes \Alex (h_\pm,\psi_\pm).
$$
Since the intersection of $M$ and $N$ in $M\sharp_\partial N$ is a $2$-disk which retracts onto $I$, 
the Mayer--Vietoris theorem gives an isomorphism $ H^M \oplus H^N \stackrel{\simeq} \longrightarrow H$.
If $\rank H^M>g$, then $\mathcal{A}_M^\varphi=0$  by Lemma \ref{lem:nullity} so that $\Alex(M,\varphi)=0$; the same lemma applied to $N$  shows that  
$$
\rank H =\rank H^M+\rank H^N >g+h
$$
so that $\Alex\big( (M,\varphi) \otimes (N,\psi)\big)=0$ and \eqref{eq:monoidality} trivially holds true.
Therefore, we can assume in the sequel that $\rank(H^M)=g$ and $\rank(H^N)=h$.

Let $x:= x^M \otimes x^N \in \Lambda^{i}  H_-^M  \otimes \Lambda^j H_-^N \subset \Lambda^{i+j} H_-$:
we aim at showing that $a := \Alex \big( (M,\varphi) \otimes  (N,\psi) \big) (x)$ is equal to 
$$
a' := \big( \Alex (M,\varphi) \otimes  \Alex(N,\psi) \big) (x) = (-1)^{i \delta\! h} \Alex (M,\varphi) (x^M) \otimes \Alex (N,\psi) (x^N).
$$
(Recall that we are using Koszul's rule in the definition of the tensor product of morphisms in the category $\PMod_{\Z[G],\pm G}$.)
It is enough to prove that, for any integers $p,q\geq 0$ such that $p+q= (g+h)-(i+j)$
and any $y := y^M \otimes y^N \in  \Lambda^{p} H_+^M \otimes \Lambda^{q} H_+^N \subset \Lambda^{p+q}H_+$, the identity
\begin{equation}\label{eq:a=a'}
\vol (a\wedge y)= \vol (a'\wedge y) 
\end{equation}
holds true  up to multiplication by an element of $\pm G$ independent of $x,y$ (and, in particular, independent of $i,j,p,q$).
In the sequel, we fix some volume forms $\vol^M$ and $\vol^N$ on $H^M_+$ and $H^N_+$ respectively,
and we assume that the volume form  $\vol$ on $H_+=H^M_+\oplus H^N_+$ is defined by 
\begin{equation} \label{eq:vol_vol}
\vol(u \wedge v) = \vol^M(u) \cdot \vol^N(v) 
\end{equation}
for any $u  \in \Lambda^{2g_+} H_+^M$ and $v \in \Lambda^{2h_+} H_+^N$.
By definition of $\Alex$, we have
\begin{equation}\label{eq:vol(a.y)}
 \vol ( a \wedge y)   = \mathcal{A}_{M \sharp_\partial  N}^{\varphi \oplus \psi} \left( \Lambda^i  m_-(x^M) \wedge  \Lambda^j  n_-(x^N) 
\wedge \Lambda^{p}  m_+(y^M) \wedge \Lambda^{q} n_+(y^N) \right).
\end{equation}
If $p>g-i$, then $i+p> \rank (H^M)$ by our assumptions, 
so that $\Lambda^i  m_-(x^M) \wedge \Lambda^{p}  m_+(y^M) \in \Lambda^{i+p} H^M$ is torsion;
we deduce that $\vol(a\wedge y)=0$; on the other hand,  
the degree of $\Alex (M,\varphi) (x^M)\wedge y^M \in \Lambda H_+^M$ is $i+\delta\!g+ p>2g_+$
so that $\vol(a'\wedge y)=0$ as well; thus \eqref{eq:a=a'} trivially holds true if $p>g-i$.
If $p<g-i$, then $q>h-j$ and the same conclusion applies. Therefore, we can assume  in the sequel that $p=g-i$ and $q=h-j$.

To proceed,  we consider  a presentation  $H^M = \langle \gamma_1, \dots, \gamma_{g+r}\, \vert\, \rho_1, \dots, \rho_r \rangle$
and a presentation $H^N = \langle \mu_1,\dots, \mu_{h+s}\, \vert\, \zeta_1,\dots, \zeta_s \rangle$.
By the above-mentioned isomorphism between $H^M\oplus H^N$ and $H$, we obtain a presentation
$$
H = \langle \gamma_1, \dots, \gamma_{g+r}, \mu_1,\dots, \mu_{h+s}\, \vert\, \rho_1, \dots, \rho_r , \zeta_1,\dots, \zeta_s \rangle.
$$
Note that, with these choices of presentations, the matrix corresponding to $H$ is the direct sum of the matrices corresponding to $H^M$ and $H^N$.
Therefore, we get
\begin{eqnarray*}
\vol( a \wedge y)     
& \stackrel{\eqref{eq:vol(a.y)} }{=}& (-1)^{is+p(s+j)} \mathcal{A}_{M}^\varphi \left( \Lambda^i  m_-(x^M) \wedge  \Lambda^{g -i}  m_+(y^M) \right) \\
&& \qquad \qquad \qquad  \cdot \mathcal{A}_{N}^\psi \big( \Lambda^j n_-(x^N) \wedge  \Lambda^{h -j}  n_+(y^N)  \big) \\
&= &(-1)^{is+p(s+j)}  \vol^M \big( \Alex (M,\varphi) (x^M) \wedge y^M\big) \cdot  \vol^N \big( \Alex (N,\psi) (x^N) \wedge y^N\big)\\ 
& \stackrel{\eqref{eq:vol_vol}}{=} & (-1)^{is+p(s+j)} \vol \big( \Alex (M,\varphi) (x^M) \wedge y^M \wedge   \Alex (N,\psi) (x^N) \wedge y^N\big) \\
&= & (-1)^{is+p(s+j)+p(j+\delta\!h)} \vol \big( \Alex (M,\varphi) (x^M) \wedge   \Alex (N,\psi) (x^N)  \wedge y^M  \wedge y^N\big) \\
& = &  (-1)^{g(s+h)} \vol (a'\wedge y).
\end{eqnarray*}

\up
\end{proof}
 
\begin{lemma} \label{lem:functoriality}
For any morphisms $(M,\varphi)\in \Cob_G((g_-,\varphi_-),(g_+,\varphi_+))$ 
and $(N,\psi)\in \Cob_G((h_-,\psi_-),(h_+,\psi_+))$ such that $(g_+,\varphi_+)= (h_-,\psi_-)$, we have
$$ 
\Alex\big((N,\psi) \circ (M,\varphi)\big) = \Alex(N,\psi) \circ \Alex(M,\varphi).
$$ 
\end{lemma}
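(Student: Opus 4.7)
The plan is to derive the composition law from a Mayer--Vietoris decomposition combined with a double Laplace expansion yielding a ``gluing formula'' for Alexander functions.

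Set $H^M := H_1^\varphi(M,I_M)$, $H^N := H_1^\psi(N,I_N)$, $H := H_1^{\psi+\varphi}(N\circ M,I)$ and $H^F := H_1^{\varphi_+}(F_{g_+},\star) = H_1^{\psi_-}(F_{g_+},\star)$, where $I_M$ and $I_N$ denote the gluing intervals in $M$ and $N$. Since $M\cap N = F_{g_+}$ inside $N\circ M$ meets $I$ in the single point $m_+(\star)=n_-(\star)$, the Mayer--Vietoris theorem produces a short exact sequence
\begin{equation*}
0 \longrightarrow H^F \stackrel{(m_{+,*},\, -n_{-,*})}{\longrightarrow} H^M \oplus H^N \longrightarrow H \longrightarrow 0
\end{equation*}
(exactness on the left coming from the vanishing of degree-zero relative chains). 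Starting from presentations of $H^M$ and $H^N$ of deficiencies $g_-+g_+$ and $g_++h_+$ as in Lemma~\ref{lem:pres}, together with a free basis $\alpha_1,\ldots,\alpha_{2g_+}$ of $H^F$, one obtains a presentation of $H$ by concatenating the generators, keeping the original relations and adjoining the $2g_+$ ``gluing'' relations $m_{+,*}(\alpha_i)-n_{-,*}(\alpha_i)=0$. The resulting deficiency is $g_-+h_+=g(N\circ M)$, as required.

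The main step is the following gluing formula: for any $u \in \Lambda^j H_1^{\varphi_-}(F_{g_-},\star)$ and any $v \in \Lambda^{g_-+h_+-j} H_1^{\psi_+}(F_{h_+},\star)$,
\begin{equation*}
\mathcal{A}_{N\circ M}^{\psi+\varphi}\bigl(m_-(u)\wedge n_+(v)\bigr) \;=\; \sum_{I} \epsilon(I)\, \mathcal{A}_M^\varphi\bigl(m_-(u)\wedge m_+(\alpha_I)\bigr)\cdot \mathcal{A}_N^\psi\bigl(n_-(\alpha_{\bar I})\wedge n_+(v)\bigr)
\end{equation*}
(valid up to an element of $\pm G$), where the sum runs over subsets $I\subset\{1,\ldots,2g_+\}$ of size $g_-+g_+-j$, $\bar I$ is the complementary subset, and $\epsilon(I)=\pm 1$. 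To prove this, apply Laplace expansion to the square determinant defining the left-hand side along the $2g_+$ rows coming from the gluing relations. Since the $M$-relations and the $m_-(u)$-rows live entirely in $\gamma$-columns while the $N$-relations and the $n_+(v)$-rows live entirely in $\mu$-columns, only column choices splitting as $|S_\gamma|=g_-+g_+-j$ in the $M$-block and $|S_\mu|=g_+-g_-+j$ in the $N$-block contribute (otherwise the complementary block-diagonal minor vanishes by row-column count mismatch). A further Laplace expansion of each $2g_+\times 2g_+$ gluing minor along its $M$-columns, followed by the reverse Laplace identity rewriting $\sum_{S_\gamma} \pm\,\det A_{I,S_\gamma}\cdot \det(P_M, U)_{\bar S_\gamma}$ as $\det(P_M, U, A_I)=\mathcal{A}_M^\varphi\bigl(m_-(u)\wedge m_+(\alpha_I)\bigr)$ (and similarly on the $N$-side), yields the claimed formula.

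Finally, fix the volume form $\vol$ on $H^F$ normalised so that $\vol(\alpha_1\wedge\cdots\wedge\alpha_{2g_+})=1$, making $\{\alpha_I\}$ and $\{\epsilon(I)\alpha_{\bar I}\}$ into $\vol$-dual bases of the appropriate exterior powers. Substituting the defining identities $\vol\bigl(\Alex(M,\varphi)(u)\wedge\alpha_I\bigr) = \mathcal{A}_M^\varphi\bigl(m_-(u)\wedge m_+(\alpha_I)\bigr)$ and $\vol\bigl(\Alex(N,\psi)(\alpha_{\bar I})\wedge v\bigr)=\mathcal{A}_N^\psi\bigl(n_-(\alpha_{\bar I})\wedge n_+(v)\bigr)$ into the gluing formula and collapsing the sum via these dual bases rewrites the right-hand side as $\vol\bigl((\Alex(N,\psi)\circ\Alex(M,\varphi))(u)\wedge v\bigr)$. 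Since this equals $\vol\bigl(\Alex(N\circ M,\psi+\varphi)(u)\wedge v\bigr)$ for all $v$, and the pairing $\vol(-\wedge-)$ is perfect on the relevant free modules, the functoriality identity follows up to an element of $\pm G$ as allowed in $\PMod_{\Z[G],\pm G}$. I expect the hard part to be the double Laplace expansion in the main step, especially the bookkeeping of signs from column permutations and the complementary subset convention; a short separate argument using Lemma~\ref{lem:nullity} and the exactness of the Mayer--Vietoris sequence handles the degenerate case where $\dim H^M_Q > g(M)$ or $\dim H^N_Q > g(N)$, which then forces $\dim H_Q > g(N\circ M)$ and makes both sides of the functoriality equation vanish trivially.
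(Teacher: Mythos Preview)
Your proof is correct and follows essentially the same route as the paper's: derive a presentation of $H$ from Mayer--Vietoris by adjoining the $2g_+$ gluing relations, expand the resulting determinant (equivalently, the wedge product in $\Lambda\Gamma$) along those gluing rows to obtain a sum over subsets $I\subset\{1,\dots,2g_+\}$ of the correct cardinality, and then recognise that sum as $\vol\bigl(\Alex(N,\psi)\circ\Alex(M,\varphi)(u)\wedge v\bigr)$ via the dual-basis identity. The paper phrases the expansion in exterior-algebra language (expanding $(mv_1-nv_1)\wedge\cdots\wedge(mv_{2g_+}-nv_{2g_+})$ directly) rather than as a Laplace expansion, and it uses the degree count in $\Lambda\Gamma$ to kill the off-cardinality terms, but this is the same computation.

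One small correction: your justification for left exactness of the Mayer--Vietoris sequence is wrong (vanishing of degree-zero relative chains gives \emph{right} exactness, not left), and in fact the sequence need not be short exact over $\Z[G]$. Fortunately you only use the resulting \emph{presentation} of $H$, and that follows already from $H \cong (H^M\oplus H^N)/\operatorname{im}(H^F)$, which needs only the right half of the long exact sequence; so the argument survives, but the sentence should be rewritten.
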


The next   subsection   is devoted to the proof of Lemma \ref{lem:functoriality}.

\subsection{Proof of the functoriality of $\Alex$} \label{subsec:functoriality}

We  use the notations of Lemma \ref{lem:functoriality} and we set 
$$
g:=g_-+g_+, \quad h:=h_-+h_+,  \quad f:= g_-+h_+,
$$
$$
\delta\!g := g_+-g_-, \quad \delta\!h := h_+-h_-, \quad \delta\!f := h_+-g_-,
$$
$$
H^M   := H_1^\varphi(M,I), \quad  H^N   := H_1^\psi(N,I), \quad H:= H_1^{\psi + \varphi}(N \circ M,I).
$$
Let   $v=(v_1,\dots, v_{2g_+})$   be a basis of $H_1^{\varphi_+}(F_{g_+},\star)$:
we set $mv_i:= m_+ ( v_i)$ and $nv_i:=n_-(v_i)$ for  all $i=1, \dots, 2g_+$. 
We consider presentations of the following form: 
$$
H^M = \langle mv_1, \dots, mv_{2g_+}, u_1, \dots, u_r\, |\, \zeta_1, \dots, \zeta_{r+\delta\!g}\rangle,
$$
$$ 
H^N = \langle nv_1, \dots, nv_{2 h_-}, w_1,\dots, w_s\, |\, \rho_1, \dots, \rho_{s - \delta\!h} \rangle.
$$
Applying the Mayer--Vietoris theorem to $N\circ M$, 
we obtain that the $\Z[G]$-module $H$ is generated by
\begin{equation}   \label{eq:symbols}
 mv_1, \dots, mv_{2g_+}, nv_1, \dots, nv_{2 h_-}, u_1, \dots, u_r, w_1,\dots, w_s
\end{equation}
subject to the relations \
$
\zeta_1, \dots, \zeta_{r+\delta\!g}, \rho_1, \dots, \rho_{s - \delta\!h}, mv_1 - nv_1, \dots, mv_{2g_+} - nv_{2g_+}.
$

In the sequel, we set  $H_- := H_1^{\varphi_-}(F_{g_-},\star)$ and $H_+ := H_1^{\psi_+}(F_{h_+},\star)$.
Let $  x   \in \Lambda^j H_-$ and $  y   \in \Lambda^{f-j} H_+$:
we wish to compute  
$$
\mathcal{A}_{N \circ M}^{\psi + \varphi} \left(\Lambda^j  m_-(x) \wedge \Lambda^{f-j}  n_+(y)\right)
$$
using the previous presentation of $H$. For this, we do some computations in $\Lambda^k \Gamma$ 
where $k := 4 g_+  + r + s$ and $\Gamma$ denotes the free $\Z[G]$-module generated by the $k$ symbols listed at \eqref{eq:symbols}.
Set $\zeta:= \zeta_1 \wedge \cdots \wedge \zeta_{r+\delta\!g}$, $\rho := \rho_1 \wedge \cdots \wedge \rho_{s-\delta\!h}$. Then, we have
\begin{eqnarray*}
&&\zeta \wedge \rho \wedge (mv_1-nv_1) \wedge \dots \wedge (mv_{2 g_+} - nv_{2 g_+})
\wedge \widetilde{ \Lambda^j m_-(x) }\wedge \widetilde{ \Lambda^{f-j} n_+(y)}\\
&=&\sum_{P } \
(-1)^{\vert P\vert} \varepsilon_P \cdot \zeta \wedge \rho \wedge mv_{P} \wedge nv_{\overline{P}}  \wedge  \widetilde{\Lambda^j  m_-(x)} \wedge \widetilde{\Lambda^{f-j} n_+(y)}\\
&=&\sum_{P } \
(-1)^{\vert P\vert (j+1)} \varepsilon_P \cdot \left(\zeta \wedge mv_{P}  \wedge \widetilde{ \Lambda^j  m_-(x)} \right) \wedge 
\left(\rho \wedge nv_{\overline{P}}  \wedge \widetilde{\Lambda^{f-j} n_+(y)}\right) \ \in \Lambda^k \Gamma.
\end{eqnarray*}
Here the sums are taken over all parts  $P \subset \{1,\dots, 2 g_+ \}$,
$\overline{P}$ denotes the complement of~$P$, $mv_{P}$ is the wedge of the $mv_i$ for $i\in P$,
$nv_{\overline{P}}$ is the wedge of the $nv_i$ for $i \in \overline{P}$ and $\varepsilon_{P}$ is the signature of the permutation $P \overline{P}$ 
(where the elements of $P$ in increasing order are followed by the elements of $\overline{P}$ in increasing order).
A sign  $(-1)^{(s-\delta\!h)(j+\vert P\vert)}$ is missing in the second sum but, 
since the presentation of $H^N$ is arbitrary of deficiency $h$,
we can assume that its number of relations $(s-\delta\!h)$ is even.

In the sequel, we omit the ``tilde'' notation to distinguish elements of $\Lambda H$ from their lifts to $\Lambda \Gamma$.
Note that, in the  above sums, the multivector $\zeta \wedge mv_{P}  \wedge \Lambda^j m_-(x)$ has degree
$(r+\delta\!g )+ \vert P \vert + j$ which is  greater than $2g_++r$ as soon as $\vert P \vert > g -j$;
similarly, the multivector $\rho \wedge nv_{\overline{P}}  \wedge \Lambda^{f-j} n_+(y)$ has degree
$(s-\delta\!h)+ (2g_+-\vert P \vert)+ (f-j)$ 
which is greater than $2h_-+s$ as soon as $\vert P \vert < g-j$; 
since $2g_++r$ and $2h_-+s$ are respectively the numbers of generators of $H^M$ and $H^N$  in the above presentations,
the summand corresponding to $P$ vanishes for $\vert P \vert > g -j$ and for $\vert P \vert < g-j$. 
Therefore the above sums are actually indexed by the subsets  $P \subset \{1,\dots, 2 g_+ \}$ having cardinality $g-j$, and we get
\begin{eqnarray*}
&&\zeta \wedge \rho \wedge (mv_1-nv_1) \wedge \dots \wedge (mv_{2 g_+} - nv_{2 g_+}) \wedge \Lambda^j m_-(x) \wedge \Lambda^{f-j} n_+(y)\\
&=& \sum_{\vert P \vert = g-j} \varepsilon'_P \cdot 
\left(\zeta \wedge mv_{P}  \wedge \Lambda^j m_-(x) \right) \wedge 
\big(\rho \wedge nv_{\overline{P}}  \wedge \Lambda^{f-j} n_+(y)\big)
\end{eqnarray*}
where we have set $\varepsilon'_P := (-1)^{\vert P\vert (j+1)} \varepsilon_P$.
The summand  is here equal to 
\begin{eqnarray*}
&& \varepsilon_P' \cdot \left(\zeta \wedge mv_{P}  \wedge \Lambda^j m_-(x) \right) \wedge 
\big(\rho \wedge nv_{\overline{P}}  \wedge   \Lambda^{f-j} n_+(y)\big) \\
&=& \varepsilon_P' \cdot  \big( \mathcal{A}_{M}^\varphi (mv_P\wedge  \Lambda^j m_-(x))\! \cdot\! (mv \wedge u)\big) \\
&&\qquad \quad \wedge \big( \mathcal{A}_N^\psi(nv_{\overline{P}}\wedge  \Lambda^{f-j} n_+(y))\! \cdot\! (nv \wedge w)\big) \\
&=& \varepsilon_P' \cdot \mathcal{A}_{M}^\varphi \big(mv_P\wedge  \Lambda^j m_-(x)\big) 
 \mathcal{A}_N^\psi\big(nv_{\overline{P}}\wedge  \Lambda^{f-j} n_+(y)\big)  \cdot \left( mv \wedge  nv \wedge u \wedge w\right).
\end{eqnarray*}
We deduce that
\begin{eqnarray*}
&& \mathcal{A}_{N \circ M}^{\psi + \varphi} \big(\Lambda^j m_-(x) \wedge \Lambda^{f-j}  n_+(y)\big) \\ 
&= &\sum_{ \vert P \vert = g-j} \
\varepsilon_P' \cdot  \mathcal{A}_{M}^\varphi \big(mv_P\wedge \Lambda^j m_-(x)\big) \cdot \mathcal{A}_N^\psi \big(nv_{\overline{P}}\wedge \Lambda^{f-j}  n_+(y)\big)\\
&= &  \mathcal{A}_N^\psi\Big( \sum_{ \vert P \vert = g-j} 
\varepsilon_P' \cdot \mathcal{A}_{M}^\varphi\big(mv_P\wedge \Lambda^j m_-(x)\big) \cdot nv_{\overline{P}}\wedge \Lambda^{f-j}  n_+(y)\Big)\\
&= &  \mathcal{A}_N^\psi\Big( \sum_{ \vert P \vert = g-j}
(-1)^{\vert P \vert}  \varepsilon_P \cdot \vol \big(  \Alex(M,\varphi)(x)\wedge v_P \big) \cdot nv_{\overline{P}}\wedge \Lambda^{f-j}  n_+(y)\Big).
\end{eqnarray*}
We can assume that the basis $v$ of $H_1^{\varphi_+}(F_{g_+},\star)$ is compatible with the chosen volume form $\vol$,
in the sense that $\vol (v_1\wedge \cdots \wedge v_{2g_+})= 1$.
Observe that, for all $z\in \Lambda^{j+\delta\!g} H_1^{\varphi_+}(F_{g_+},\star)$, we have the identities
$$
z = \sum_{ \vert P \vert = g-j} \varepsilon_{\overline{P}}  \cdot\vol   (z\wedge v_P)\cdot v_{\overline{P}}
= \sum_{ \vert P \vert = g-j} (-1)^{\vert P \vert}\cdot \varepsilon_{P}  \cdot\vol (z\wedge v_P)\cdot v_{\overline{P}}
$$
where the sums range over all subsets $P \subset \{1,\dots,2g_+\}$ of cardinality $g-j$. Hence
\begin{eqnarray*}
 \mathcal{A}_{N \circ M}^{\psi + \varphi} \big(\Lambda^j m_-(x) \wedge \Lambda^{f-j}  n_+(y)\big) 
 &= &  \mathcal{A}_N^\psi\big( \Lambda^{j+\delta\!g} n_- \Alex(M,\varphi)(x) \wedge \Lambda^{f-j}  n_+(y)\big)\\
&=& \vol \left(\Alex(N,\psi)\big(\Alex(M,\varphi)(x)\big) \wedge y\right).
\end{eqnarray*}
It follows that 
$\vol\left(\Alex\big((N,\psi) \circ (M,\varphi) \big) (x) \wedge y \right)=\vol \left(\Alex(N,\psi)\big(\Alex(M,\varphi)(x)\big) \wedge y\right)$,
which concludes the proof of Lemma \ref{lem:functoriality}.

\section{Alexander functor and knots} \label {sec:A_knots}

In this section, we relate the functor  $\Alex$ to the classical Alexander polynomial of knots.
We fix a finitely generated free abelian group $G$;
the extension of a group homomorphism $\varphi:A \to G$ to a ring homomorphism $\Z[A]\to \Z[G]$ is still denoted by $\varphi$.

\subsection{The Alexander polynomial of a topological pair} \label{subsec:topological_pair}

Given a finitely generated $\Z[G]$-module $N$ and an integer $i\geq 0$,
the \emph{$i$-th Alexander polynomial} of $N$ is the greatest common divisor of all minors of order $n-i$ 
in an $m \times n$ presentation matrix of $N$. This algebraic invariant is denoted by $\Delta_i N\in \Z[G]/\pm G$.

Let $(X,Y)$ be a pair of topological spaces, and assume that they have the homotopy type of finite CW-complexes.
Consider a group homomorphism $\varphi:H_1(X) \to G$.
The \emph{Alexander polynomial} of $(X,Y)$ with coefficients $\varphi$ is 
$$
\Delta^\varphi(X,Y) := \Delta_0 H_1^\varphi(X,Y) \in  \Z[G]/\pm G.
$$
If $Y$ is empty, we set $\Delta^\varphi(X):= \Delta_0 H_1^\varphi(X)$.

\subsection{The Alexander function in genus one}

Let $M$ be a compact connected orientable $3$-manifold with connected boundary, and fix a base point $\star \in \partial M$. 
Let also $\varphi:H_1(M)\to G$ be a group homomorphism.
The next lemma  generalizes Property 1 of the Alexander function given in \cite{Lescop}.

\begin{lemma}\label{lem:def_one}
Assume that $g(M)=1$ and that $\varphi$ is not trivial.
Then, for any $h\in H:= H_1^\varphi(M,\star)$, we have
$$
\mathcal{A}_M^\varphi(h) = \left\{\begin{array}{ll}
\Delta^\varphi(M) \cdot \partial_*(h) & \hbox{ if } \rank \varphi(H_1(M)) \geq 2,\\
{\displaystyle \Delta^\varphi(M) \cdot \frac{\partial_*(h)}{t-1}} & \hbox{ if } \rank \varphi(H_1(M)) =1 \hbox{ and $t$ is a generator.}
\end{array}\right.
$$
Here $\partial_*:H\to \Z[G]$ is the connecting homomorphism
$H_1^\varphi(M,\star) \to H_0^\varphi(\star)$ in the long exact sequence of the pair $(M,\star)$, 
followed by the canonical isomorphism $H_0^\varphi(\star)\simeq \Z[G]$.
\end{lemma}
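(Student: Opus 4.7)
The plan is to fix a deficiency-one handle decomposition of $M$ coming from Lemma \ref{lem:pres} and exploit the fact that, since $g(M)=1$, both $\mathcal{A}_M^\varphi$ and $\partial_*$ become linear functionals on $H$ whose coefficient vectors lie in a common one-dimensional kernel.

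Concretely, I would choose generators $\gamma_1,\dots,\gamma_{r+1}$ of $H$ given by the $1$-cells, with homology classes $g_1,\dots,g_{r+1}\in H_1(M)$, and denote by $A$ the corresponding $r\times(r+1)$ presentation matrix. Writing $h=\sum a_i\gamma_i$, the definition of $\mathcal{A}_M^\varphi$ gives $\mathcal{A}_M^\varphi(h)=\sum_i a_i\nu_i$ with $\nu_i:=(-1)^{r+1+i}M_i$, where $M_i$ is the $r\times r$ minor of $A$ obtained by deleting column $i$; the connecting map gives $\partial_*(h)=\sum_i a_ib_i$ with $b_i:=\varphi(g_i)-1$. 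The two key relations are $A\nu=0$ (Laplace expansion) and $Ab=0$ (from $\partial_1\partial_2=0$ in the chain complex of the pair $(M,\star)$). Assuming $\mathcal{A}_M^\varphi\not\equiv 0$---otherwise $H$ has $Q(G)$-rank at least $2$ by Lemma \ref{lem:nullity}, so $H_1^\varphi(M)$ has positive rank, $\Delta^\varphi(M)=0$, and the formula holds trivially---the matrix $A$ has full rank $r$ over $Q(G)$, so $\ker A\otimes Q(G)$ is one-dimensional; since $b\neq 0$ (as $\varphi$ is nontrivial), there is a unique $\lambda\in Q(G)$ with $\nu=\lambda b$, giving $\mathcal{A}_M^\varphi(h)=\lambda\cdot\partial_*(h)$ for every $h\in H$.

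Next I would identify $\lambda$ with $\Delta^\varphi(M)$ or $\Delta^\varphi(M)/(t-1)$. Taking greatest common divisors in the unique factorization domain $\Z[G]$, the equation $\nu=\lambda b$ yields $\gcd_i(\nu_i)=\lambda\cdot\gcd_i(b_i)$, up to units. The left-hand side equals the first elementary divisor $\Delta_1(H)$. The long exact sequence of $(M,\star)$ realizes $H$ as an extension of the ideal $\operatorname{im}(\partial_*)\subset\Z[G]$ by $H_1^\varphi(M)$; since an ideal of $\Z[G]$ is torsion-free, $H_1^\varphi(M)=\Tors(H)$, and the identity $\Delta_1(H)=\Delta_0(\Tors(H))$ for rank-one modules over $\Z[G]$ gives $\gcd_i(\nu_i)=\Delta^\varphi(M)$. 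For the right-hand side: in the rank-one case, writing $\varphi(g_i)=t^{n_i}$ with $\gcd(n_i)=1$ (since the $g_i$ generate $\pi_1(M)$, hence their images generate $\langle t\rangle$), one has $\gcd_i(t^{n_i}-1)=t-1$; in the rank $\geq 2$ case, two $\Z$-linearly independent $\varphi(g_j),\varphi(g_k)$ produce coprime elements $\varphi(g_j)-1$ and $\varphi(g_k)-1$ in $\Z[G]$, so $\gcd_i(b_i)$ is a unit. Substituting into $\nu=\lambda b$ yields the two formulas stated.

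I expect the main obstacle to be the commutative-algebra identity $\Delta_1(H)=\Delta_0(\Tors(H))$: when $G$ has rank at least $2$ the ring $\Z[G]$ is not a PID and $H$ need not split as torsion plus free, so this has to be justified by localizing at each height-one prime of $\Z[G]$, where the ring becomes a DVR and the elementary divisor theorem applies, and then patching the local identifications using that $\Z[G]$ is a UFD in which ideals are determined by their height-one localizations.
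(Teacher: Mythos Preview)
Your argument is correct and follows essentially the same route as the paper's: reduce to the case $\mathcal{A}_M^\varphi\neq 0$, observe that $\mathcal{A}_M^\varphi$ and $\partial_*$ are then proportional over $Q(G)$ (you phrase this as $\nu,b\in\ker A$ with $\ker A$ one-dimensional; the paper phrases it as two nonzero functionals on a rank-one $Q(G)$-module), and identify the constant by comparing gcds via $\Delta_1(H)=\Delta_0(\Tors H)$. The paper isolates this last identity as a separate lemma and cites Blanchfield for it, whereas you sketch the localization proof; one small slip is that your line ``since an ideal of $\Z[G]$ is torsion-free, $H_1^\varphi(M)=\Tors(H)$'' gives only the inclusion $\Tors(H)\subset H_1^\varphi(M)$---equality needs the rank count $\rank_{Q(G)}H=1$, which you do have from $\mathcal{A}_M^\varphi\neq 0$.
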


We shall deduce Lemma \ref{lem:def_one} from the following. 
 
\begin{lemma}\label{lem:abs_rel}
If $\varphi$ is not trivial, then  $\Delta^\varphi(M) =\Delta_1 H_1^\varphi(M,\star)$.
\end{lemma}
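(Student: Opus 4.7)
The plan is to verify the equality $\Delta^\varphi(M)=\Delta_1 H_1^\varphi(M,\star)$ prime by prime in the UFD $\Z[G]$. First I would use Lemma \ref{lem:pres} to choose a handle decomposition of $M$ with a single $0$-handle at $\star$, $s$ $1$-handles $\gamma_1,\dots,\gamma_s$, and $r$ $2$-handles; the associated $\varphi$-twisted cellular chain complex reads
$$
\Z[G]^r \xrightarrow{D_2} \Z[G]^s \xrightarrow{D_1} \Z[G]
$$
with $D_1=(a_1,\dots,a_s)$ and $a_i:=\varphi(\gamma_i)-1$, and at least one $a_i$ is nonzero because $\varphi\neq 0$. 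Then $H_1^\varphi(M,\star)=\Coker D_2$ is presented by $D_2$, so $\Delta_1 H_1^\varphi(M,\star)$ is the gcd of the $(s-1)$-minors of $D_2$, while $H_1^\varphi(M)=\Ker D_1/\operatorname{Im} D_2$.

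The key preliminary reduction is to set $d:=\gcd(a_1,\dots,a_s)$ and write $a_i=d\cdot b_i$, so that $B_1:=(b_1,\dots,b_s)$ satisfies $\gcd(b_i)=1$ and $D_1=d\cdot B_1$. Since $d$ is a non-zero-divisor in $\Z[G]$, one has $\Ker D_1=\Ker B_1$, and $H_1^\varphi(M)$ is unaffected by replacing $D_1$ with $B_1$.

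The main step is to fix a height-one prime $(\pi)\subset\Z[G]$: because $\gcd(b_i)=1$, some $b_j$ is a unit in the DVR $\Z[G]_{(\pi)}$, and an elementary change of basis on $\Z[G]_{(\pi)}^s$ turns $B_1$ into the projection onto the $j$-th coordinate. Every column of $D_2$ lies in $\Ker B_1$, so the $j$-th row of the transformed $D_2$ vanishes; $D_2$ then splits as the zero map on the $j$-th summand together with an $(s-1)\times r$ block $D_2'$ presenting $H_1^\varphi(M)_{(\pi)}$. In particular,
$$
H_1^\varphi(M,\star)_{(\pi)}\;\cong\;\Z[G]_{(\pi)}\oplus H_1^\varphi(M)_{(\pi)}.
$$
The standard direct-sum formula for Fitting ideals, together with $E_0(\Z[G]_{(\pi)})=0$ and $E_1(\Z[G]_{(\pi)})=\Z[G]_{(\pi)}$, then gives $E_1(H_1^\varphi(M,\star))_{(\pi)}=E_0(H_1^\varphi(M))_{(\pi)}$. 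Since $\Z[G]$ is a UFD, the gcd of an ideal is detected by its valuations at height-one primes, whence $\Delta_1 H_1^\varphi(M,\star)=\Delta_0 H_1^\varphi(M)=\Delta^\varphi(M)$ in $\Z[G]/\pm G$.

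The hard part is exactly the preliminary step: at primes $(\pi)$ dividing the common factor $d$, no entry of $D_1$ itself is a unit and the elementary basis change fails if applied directly to $D_1$. Pulling out $d$ and working with the coprime row $B_1$ is what makes the local analysis uniform over all height-one primes, and it is this reduction that distinguishes the nontrivial-$\varphi$ case from the trivial one.
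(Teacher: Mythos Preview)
Your proof is correct and takes a genuinely different route from the paper. The paper argues homologically: from the long exact sequence of the pair $(M,\star)$ it extracts that $\Tors H_1^\varphi(M)\simeq\Tors H_1^\varphi(M,\star)$ (using that $H_0^\varphi(\star)\simeq\Z[G]$ is torsion-free) and that $\rank H_1^\varphi(M,\star)=\rank H_1^\varphi(M)+1$ (using nontriviality of $\varphi$ to show $\rank H_0^\varphi(M)=0$), and then invokes Blanchfield's formula $\Delta_i(N)=\Delta_{i-\rank N}(\Tors N)$ for $i\geq\rank N$ to conclude. Your approach is more algebraic and self-contained: you work directly with the presentation matrices, localize at height-one primes of the UFD $\Z[G]$, and use the Fitting-ideal formalism in place of the cited lemma of Blanchfield. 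The paper's argument is shorter and yields the global rank and torsion comparisons as byproducts; yours avoids quoting an external structural result and makes very transparent where nontriviality of $\varphi$ enters (it exactly guarantees $D_1\neq 0$, so that the gcd $d$ exists and the coprime row $B_1$ can be formed). Your local splitting $H_1^\varphi(M,\star)_{(\pi)}\cong\Z[G]_{(\pi)}\oplus H_1^\varphi(M)_{(\pi)}$ is, in effect, a height-one local version of the rank-plus-one and torsion statements that the paper proves globally.
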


\begin{proof}
The long exact sequence in $\varphi$-twisted homology for the pair $(M,\star)$ gives
$$
0 \longrightarrow H_1^\varphi(M) \longrightarrow H_1^\varphi(M,\star) \longrightarrow H_0^\varphi(\star) \longrightarrow H_0^\varphi(M) \longrightarrow 0.
$$
Since the $\Z[G]$-module $H_0^\varphi(\star)\simeq \Z[G]$ is torsion-free, we deduce that 
\begin{equation} \label{eq:Tors_Tors}
\Tors H_1^\varphi(M) \simeq \Tors H_1^\varphi(M,\star).
\end{equation}
Besides, the above exact sequence implies that
$$
\rank H_1^\varphi(M) - \rank H_1^\varphi(M,\star) + 1 - \rank H_0^\varphi(M) = 0.
$$
We now show that  $\rank H_0^\varphi(M) = 0$.
By considering a cell decomposition of $M$ with $\star$ as a single $0$-cell
and some $1$-cells $e_1,\dots, e_r$, we see that
$$
H_0^\varphi(M) \simeq \Z[G] \big/ \big\langle (g_1-1), \dots, (g_r-1) \big\rangle_{\operatorname{ideal}}
$$ 
where $g_i:= \varphi([e_i])\in G$. Thus we have the short exact sequence of modules
$$
0 \longrightarrow I_\varphi \longrightarrow \Z[G] \longrightarrow H_0^\varphi(M) \longrightarrow 0,
$$
where $I_\varphi$ is the ideal generated by the $\varphi(h)-1$ for all $h\in H_1(M)$.
By tensoring with the field of fractions $Q(G)$, we obtain
$$
0 \longrightarrow  Q(G) \otimes_{\Z[G]} I_\varphi  \longrightarrow Q(G) \longrightarrow Q(G) \otimes_{\Z[G]} H_0^\varphi(M)  \longrightarrow 0.
$$
Since $\varphi$ is not trivial,   $Q(G) \otimes_{\Z[G]} I_\varphi \neq 0$ so that $Q(G) \otimes_{\Z[G]} H_0^\varphi(M) =0$. Hence
\begin{equation}\label{eq:rrr}
\rank H_1^\varphi(M,\star)  = \rank H_1^\varphi(M) + 1.
\end{equation}
We conclude thanks to \eqref{eq:Tors_Tors} and \eqref{eq:rrr} using the following:

\begin{quote}\textbf{Fact.}  \cite[Lemma 4.10]{Blanchfield}.
{\it Let $N$ be a finitely generated $\Z[G]$-module. Then
$$ 
\Delta_i(N) = \left\{\begin{array}{ll}
0 & \hbox{ if } i < \rank(N)\\
\Delta_{i-\rank{N}}(\Tors N) & \hbox{ if } i \geq  \rank(N).
\end{array}\right.
$$}
\end{quote}

\up
\end{proof}

\begin{proof}[Proof of Lemma \ref{lem:def_one}]
Observe that, for any oriented  loop $\rho$ in $M$ based at $\star$,   we have $\partial_*([\widehat{\rho}]) = \varphi([\rho])-1$.
Thus, the greatest common divisor of $\partial_*(H)$ is 
$$
\gcd \partial_*(H)= \gcd \big\{\varphi(x)-1\, \vert\, x\in H_1(M)\big\} \in \Z[G]/\pm G.
$$
Since $\varphi$ is assumed to be non-trivial, we deduce that
$$
\gcd \partial_*(H) =
\left\{\begin{array}{ll}
1 & \hbox{ if } \rank \varphi(H_1(M)) \geq 2,\\
t-1 & \hbox{ if } \rank \varphi(H_1(M)) =1 \hbox{ and $t$ is a generator.}
\end{array}\right.
$$
Therefore, we have to prove that
\begin{equation}\label{eq:with_gcd}
\mathcal{A}_M^\varphi(h)= \Delta^\varphi(M) \cdot \frac{\partial_*(h)}{\gcd \partial_* (H)}. 
\end{equation}
For this, we consider a presentation 
$H = \langle \gamma_1,\dots,\gamma_{r+1}\, \vert\,  \rho_1,\dots,\rho_r\rangle$
and let $A$ be the associated $r \times (r+1)$ matrix. 
We have
$$
\forall z_1,\dots,z_{r+1} \in \Z[G], \
\mathcal{A}_M^\varphi(z_1\gamma_1+\cdots+z_{r+1}\gamma_{r+1}) = 
\sum_{i=1}^{r+1} (-1)^{i+r+1} \det(A_{i}) z_i
$$
where $A_i$ is the matrix $A$ with the $i$-th column removed.
Then Lemma \ref{lem:abs_rel} gives 
\begin{equation}\label{eq:D_D_gcd}
\Delta^\varphi(M) =  \Delta_1 H= \gcd \mathcal{A}_M^\varphi(H).
\end{equation}

It follows that $\Delta^\varphi(M)=0$ if and only if $\mathcal{A}_M^\varphi=0$.
In that case \eqref{eq:with_gcd} trivially holds true: thus we assume  in the sequel  that $\mathcal{A}_M^\varphi \neq 0$.
Lemma \ref{lem:nullity}  implies that $\rank H =1$: it follows that any two $Q(G)$-linear maps $ Q(G) \otimes_{\Z[G]} H \to Q(G)$ are linearly dependent.
Since $\mathcal{A}_M^\varphi \neq 0$ and $\partial_* \neq 0$, we deduce that there exist non-zero elements $D,E\in \Z[G]$ such that
\begin{equation}\label{eq:DE}
\forall h\in H, \quad \mathcal{A}_M^\varphi(h) = \frac{D}{E} \cdot \partial_*(h)
\end{equation}
or, equivalently, $D   \partial_*(h)  = E \mathcal{A}_M^\varphi(h)$ for all $h\in H$.
Hence  $D \gcd \partial_*(H)= E\gcd \mathcal{A}_M^\varphi(H)$ and we deduce from \eqref{eq:D_D_gcd} that
\begin{equation}\label{eq:DE_bis}
\frac{D}{E} = \frac{\Delta^\varphi(M)}{ \gcd \partial_*(H)}.
\end{equation}
The identity \eqref{eq:with_gcd} is then deduced from \eqref{eq:DE} and \eqref{eq:DE_bis}.
\end{proof}

\subsection{The functor $\Alex$ on knot exteriors} \label{subsec:knot_exteriors}

Let $K$ be an oriented knot in an oriented  homology $3$-sphere $N$.
The  \emph{Alexander polynomial} of $K$ is classically defined as
$$
\Delta(K) := \Delta^{\varphi_K}(M_K) =\Delta_0\, H_1^{\varphi_K}\!(M_K) \ \in \Z[G]/\pm G
$$ 
where $M_K$ is the complement of an open tubular neighborhood of $K$ in $N$, $G$ is the infinite cyclic group spanned by $t$,
and $\varphi_K:H_1(M_K)\to G$ is the isomorphism mapping an oriented meridian $\mu\subset \partial M_K$ of $K$ to $t$.
Note that $\Delta(K)$ is a Laurent polynomial in the variable $t$, which is defined up to multiplication by a monomial $\pm t^k$ for $k\in \Z$.

We make $M_K$  a morphism $1\to 0$ in the category $\Cob$ by choosing a boundary-parametrization $m: F(1,0) \to\partial M_K$
such that $\mu_- := m^{-1}(\mu)$  is contained in the bottom surface $F_1$ and goes through the base point $\star$. 
Set  $H_- := H_1^{\varphi_K m_- }(F_1,\star)$.
The following proposition  shows that the knot invariants $\Delta(K)$  and $\Alex(M_K,\varphi_K)$ carry the same topological information.
This is deduced  from Lemma \ref{lem:def_one} applied to $M:=M_K$.

\begin{proposition} \label{prop:Alexander_knot}
With the above notation and for any $h\in \Lambda^i H_-$, we have
$$
\Alex(M_K,\varphi_K)(h) = \left\{\begin{array}{ll} \Delta(K) \cdot \partial_*(h)/(t-1) & \hbox{if } i=1,\\
0 & \hbox{otherwise},
\end{array}\right.
$$
where $\partial_*:H_-\to \Z[G]$ is the connecting homomorphism for the pair $(F_1,\star)$.
In particular,  we have $\Delta(K)= \Alex(M_K,\varphi_K)([\widehat \mu_-])$.
\end{proposition}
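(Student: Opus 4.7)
The plan is to reduce the proposition directly to Lemma \ref{lem:def_one} by unwinding the definition of $\Alex$ in the special case of a cobordism $1 \to 0$.

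First I would observe that since $g_+ = 0$, we have $F_{g_+} = F_0$ (a disk), so $H_+ := H_1^{\varphi_K m_+}(F_0,\star) = 0$. Consequently $\Lambda^k H_+ = 0$ for $k \geq 1$, while $\Lambda^0 H_+ = \Z[G]$, and the chosen volume form $\vol \colon \Lambda^0 H_+ \to \Z[G]$ is (up to $\pm G$) the identity. Since $\delta\!g = -1$, the image $\Alex(M_K,\varphi_K)(h) \in \Lambda^{i-1} H_+$ automatically vanishes for $i \neq 1$, which handles the ``otherwise'' case.

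For $i = 1$, the total genus in the sense of \S \ref{subsec:Alexander_functor} is $g = g_- + g_+ = 1$, and the defining identity with $j = 1$, $y = 1 \in \Lambda^0 H_+$ simply reads
\[
\mathcal{A}_{M_K}^{\varphi_K}\bigl( m_-(h) \bigr) = \vol\bigl( \Alex(M_K,\varphi_K)(h) \bigr),
\]
so $\Alex(M_K,\varphi_K)(h) = \mathcal{A}_{M_K}^{\varphi_K}(m_-(h))$ up to multiplication by an element of $\pm G$.

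Next I would invoke Lemma \ref{lem:def_one} applied to $M := M_K$: indeed $g(M_K) = 1$ because $\partial M_K$ is a torus, $\varphi_K$ is non-trivial, and $\mathrm{rank}\, \varphi_K(H_1(M_K)) = 1$ with generator $t$. The lemma therefore gives
\[
\mathcal{A}_{M_K}^{\varphi_K}\bigl( m_-(h) \bigr) = \Delta^{\varphi_K}(M_K) \cdot \frac{\partial_*^{M_K}(m_-(h))}{t-1} = \Delta(K) \cdot \frac{\partial_*(h)}{t-1},
\]
where in the last equality I use that $\Delta^{\varphi_K}(M_K) = \Delta(K)$ by the definition in \S \ref{subsec:topological_pair}, and that the connecting homomorphism is natural with respect to the inclusion $m_- \colon (F_1,\star) \to (M_K,\star)$, so that $\partial_*^{M_K} \circ m_- = \partial_*^{F_1}$. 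This yields the first formula.

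Finally, for the ``in particular'' statement, I would recall the observation (also used in the proof of Lemma \ref{lem:def_one}) that for any oriented loop $\rho$ in $F_1$ based at $\star$, one has $\partial_*([\widehat{\rho}]) = \varphi_K m_-([\rho]) - 1$. Applied to $\rho = \mu_-$, the defining condition $\varphi_K(\mu) = t$ gives $\partial_*([\widehat{\mu}_-]) = t - 1$, whence $\Alex(M_K,\varphi_K)([\widehat{\mu}_-]) = \Delta(K)$. No step is really an obstacle here; the only point requiring care is tracking the naturality of $\partial_*$ and the triviality of the volume form on $\Lambda^0 H_+ = \Z[G]$ so that the identification up to $\pm G$ is unambiguous.
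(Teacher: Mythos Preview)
Your proposal is correct and follows exactly the approach indicated in the paper, which simply states that the proposition ``is deduced from Lemma~\ref{lem:def_one} applied to $M:=M_K$.'' You have carefully unwound the definition of $\Alex$ for a cobordism $1\to 0$ (noting $H_+=0$ so only $i=1$ survives and the defining identity collapses to $\Alex(M_K,\varphi_K)(h)=\mathcal{A}_{M_K}^{\varphi_K}(m_-(h))$), applied Lemma~\ref{lem:def_one} in the rank-one case, and used naturality of the connecting homomorphism to pass from $\partial_*^{M_K}$ to $\partial_*^{F_1}$; this is precisely the intended argument, spelled out in full.
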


\section{The Reidemeister functor $\Reid$} \label{sec:Reidemeister_functor}

In this section, we construct the Reidemeister functor $\Reid$.
We fix a field $\F$ and a subgroup $G$ of $\F^\times$.
  In this section, the extension of  a group homomorphism $\varphi:A \to G$ to a ring homomorphism $\Z[A]\to \F$  is still denoted by $\varphi$.

\subsection{The Reidemeister function} \label{subsec:R_function}

We use the elementary theory of  abelian Reidemeister torsions to construct an analogue of the Alexander function considered in \S \ref{subsec:Alexander_function}. 
Let $M$ be a compact connected orientable $3$-manifold with connected boundary, and let $\varphi:H_1(M) \to G$ be a group homomorphism.
We fix a base point $\star \in \partial M$ and we set $g:=g(M)=1-\chi(M)$.

\begin{lemma} \label{lem:homology}
We have $H_i^{\varphi}(M,\star) = 0$ if $i=0$ or $i>2$. Moreover, we have
$$
\dim H_1^{\varphi}(M,\star) = g + \dim H_2^{\varphi}(M,\star).
$$
\end{lemma}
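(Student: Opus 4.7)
The plan is to exploit the fact that a compact $3$-manifold with non-empty boundary has the homotopy type of a $2$-complex, so the twisted chain complex is concentrated in two degrees; the dimension identity then falls out of the Euler characteristic.

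More concretely, I would reuse the handle decomposition from the proof of Lemma \ref{lem:pres}: pick a decomposition of $M$ with one $0$-handle, $s$ $1$-handles and $r$ $2$-handles, where $s - r = g(M) = g$ because the boundary has genus $g$. This produces a $2$-dimensional CW-complex $X \subset M$ with a single $0$-cell (which we take to be $\star$), $s$ $1$-cells, and $r$ $2$-cells, onto which $M$ deformation retracts. Then $C^\varphi(M, \star) \simeq C^\varphi(X, \star)$ as chain complexes, and since the only $0$-cell is $\star$ itself, the relative cellular complex takes the form
\[
0 \longrightarrow \F^{r} \longrightarrow \F^{s} \longrightarrow 0,
\]
concentrated in degrees $2$ and $1$.

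From this shape the first two assertions are immediate: $C^\varphi_i(M, \star) = 0$ for $i = 0$ and for $i > 2$, so the same vanishing holds in homology. For the dimension identity, I would compute the Euler characteristic of this complex in two ways. On the chain level it equals $r - s = -g$. On the homology level it equals $\dim H_2^\varphi(M, \star) - \dim H_1^\varphi(M, \star)$. Equating the two gives
\[
\dim H_1^\varphi(M, \star) = g + \dim H_2^\varphi(M, \star),
\]
as desired.

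There is really no serious obstacle: the only thing to check carefully is that the relative complex based at a $0$-cell truly starts in degree $1$ (so that the $H_0$-vanishing is automatic and no auxiliary connectedness argument is needed), and that the deformation retract of $M$ onto $X$ lifts to the maximal abelian cover so that $H^\varphi(M, \star) \cong H^\varphi(X, \star)$. Both points are standard for twisted cellular homology and require no computation.
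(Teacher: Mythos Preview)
Your proof is correct and is essentially the same as the paper's: both retract $M$ onto a $2$-complex with $\star$ as its unique $0$-cell to get the vanishing in degrees $0$ and $>2$, and both deduce the dimension identity from the Euler characteristic. The only cosmetic difference is that the paper computes $\chi(M,\star)=\chi(M)-1=-g$ directly rather than counting cells as $r-s$.
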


\begin{proof}
Since $\partial M$ is non-empty, $M$  deformation retracts to a connected $2$-dimensional complex whose only $0$-cell is $\star$: the first assertion follows.
Moreover, we have 
$$
-g= \chi(M) - 1 = \chi(M,\star) = - \dim H_1^{\varphi}(M,\star) + \dim H_2^{\varphi}(M,\star).
$$

\up
\end{proof}

Denote $  H :=   H_1^{\varphi}(M,\star)$  and assume in this paragraph that $\dim H =g$.
We choose a cell decomposition of $M$ where $\star$ is a $0$-cell:
by Lemma \ref{lem:homology}, the homology of the $\varphi$-twisted cell chain complex $C^\varphi(M,\star)$ is concentrated in degree $1$.
For every dimension $i\in\{0,\dots,3\}$, let $n_i\geq 0$ be the number of relative $i$-cells of $(M,\star)$
and order them $\sigma_1^{(i)},\dots,\sigma_{n_i}^{(i)}$ in an arbitrary way. 
For every cell $\sigma$ of $(M,\star)$, we also choose an orientation of $\sigma$ 
and a lift $\hat \sigma$ of $\sigma$ to the maximal abelian cover $\widehat M$ of $M$.
Thus, we get a basis $c:=(c_3,c_2,c_1,c_0)$  of the $\F$-chain complex  $C^{\varphi}(M,\star)$
where, for every  $i\in\{0,\dots,3\}$, the basis of the $\F$-vector space $C^{\varphi}_i(M,\star)$ is given by
$
c_i:= \big(1\otimes \hat\sigma_1^{(i)},\dots,1 \otimes \hat\sigma_{n_i}^{(i)}\big).
$
Then  we  consider the function $H^g \to \mathbb{F}$ defined by
\begin{equation} \label{eq:linear_independent_case}
 (h_1,  \dots , h_g)  \longmapsto  \left\{ \begin{array}{ll}
\tau\big(C^{\varphi}(M,\star); c, (h_1,\dots,h_g) \big) & \hbox{if $h_1Ê\wedge  \cdots  \wedge h_g \neq 0$, } \\
0 & \hbox{otherwise.}
\end{array}\right.
\end{equation}
Here $\tau\left(C; c,h \right)$ denotes the torsion of the finite $\F$-chain complex $C$ with basis $c$ and homological basis $h$: see \S \ref{subsec:torsion_def}.
It follows from the definition of the torsion that the map \eqref{eq:linear_independent_case} is multilinear and alternate: see Lemma \ref{lem:torsion_as_function}.

\begin{definition} \label{def:Reidemeister_function}
The \emph{Reidemeister function} of $M$ with coefficients $\varphi$ is the $\F$-linear map
$\mathcal{R}_M^\varphi:\Lambda^g H \to \F$ defined by \eqref{eq:linear_independent_case} if $\dim H=g$
and by $\mathcal{R}_M^\varphi:= 0$  if $\dim HÊ\neq g$.
\end{definition}

Because of the choice of the orders, orientations, and lifts of the  cells of $(M,\star)$,
the map $\mathcal{R}_M^\varphi$ is only defined up to multiplication by an element of $\pm G\subset \F$.
It remains to justify that $\mathcal{R}_M^\varphi\in \Hom(\Lambda^g H,\F)/\!\pm G$ defines a topological invariant of $M$ 
(i.e$.$, it does not depend on the choice of the cell decomposition).
Note that we do not need  Chapman's result on the topological invariance of the torsion of CW-complexes \cite{Chapman,Cohen} since we are considering here manifolds of dimension $3$.
Specifically, using Whitehead's theory of smooth triangulations and the fact  that the Reidemeister torsion of CW-complexes is invariant under cellular subdivisions, 
we obtain that the above definition of $\mathcal{R}_M^\varphi$ applied to a smooth triangulation of $(M,\star)$ produces an invariant of smooth $3$-manifolds. 
(See \cite[\S 9]{Milnor_Whitehead}  or \cite[\S 3]{Turaev_Euler} for similar arguments which are valid in any dimension.)
Next, we appeal to the $3$-dimensional   Hauptvermutung  to conclude that $\mathcal{R}_M^\varphi$ is an invariant of topological $3$-manifolds.
Thus, we can consider  in Definition \ref{def:Reidemeister_function}  an arbitrary cell decomposition of $(M,\star)$
provided it can be subdivided to a smooth triangulation of $M$.

\subsection{Definition of $\Reid$} \label{subsec:Reidemeister_functor}

The definition of the functor $\Reid$ from the Reidemeister function $\mathcal{R}$ 
goes parallel to the definition of   $\Alex$ from $\mathcal{A}$ (see \S \ref{subsec:Alexander_functor}).
Thus we associate to any object $(g,\varphi)$ of $\Cob_G$  the exterior algebra 
$$
\Reid(g,\varphi) :=  \Lambda\, H_1^{\varphi}(F_g,\star)
$$
of the $\F$-vector space $H^{\varphi}(F_g,\star)=H_1^{\varphi}(F_g,\star)$, which has dimension $2g$.
Next, we  associate to any morphism $(M,\varphi)$ from $({g_-},\varphi_-)$ to $({g_+},\varphi_+)$ an $\F$-linear map 
$$ 
\Reid(M,\varphi) : \Lambda\,  H_1^{\varphi_-}(F_{g_-},\star) \longrightarrow \Lambda\,  H_1^{\varphi_+}(F_{g_+},\star)
$$
of degree  $\delta\!g:=g_+-g_-$ in the following way. 
We set $H :=H_1^{\varphi}(M,I)$ where  $I:=m(\star\times [-1,1])$,  $  H_\pm:=   H_1^{\varphi_\pm}(F_{g_\pm},\star)$ and $g:=g_++g_-$.
Then, for any integer $j\geq 0$,
the image $\Reid(M,\varphi)(x)\in \Lambda^{j+ \delta\!g}H_+$ of any  $x \in \Lambda^j H_-$  is defined by the following property:
$$
\forall y \in \Lambda^{g - j} H_+, \
\mathcal{R}_M^\varphi \left( \Lambda^j m_-(x) \wedge \Lambda^{g-j} m_+(y) \right) = \vol  \big( \Reid(M,\varphi)(x) \wedge y\big).
$$ 
Here $\vol   :\Lambda^{2g_+} H_+ \to \F$ is an arbitrary volume form which is   \emph{integral} in the following sense:
regarding  $H_+$ as  $\F \otimes_{\Z[H_1(F_{g_+})]} H_1(F_{g_+},\star;\Z[H_1(F_{g_+})])$,
 we assume that $\vol$ arises from an arbitrary volume form on the free $\Z[H_1(F_{g_+})]$-module $H_1(F_{g_+},\star;\Z[H_1(F_{g_+})])$. 
Due to the choices of this volume  form and of the ordered/oriented lifts of the cells to $\widehat{M}$,
the map $\Reid(M,\varphi)$  is only defined up to multiplication by an element of $\pm G\subset \F$.
Besides,  $R(M,\varphi)$ is trivial on $ \Lambda^j H_-$ for any  $j<\max(0,-\delta\!g)$  and any $j>\min(g,2g_-)$.  

The next two lemmas show that the above paragraph
defines a monoidal functor $\Reid:\Cob_G \to \PVect_{\F,\pm G}$, which proves Theorem  II of the Introduction.

\begin{lemma} \label{lem:monoidality_for_Reid}
For any morphisms $(M,\varphi)\in \Cob_G((g_-,\varphi_-),(g_+,\varphi_+))$  and $(N,\psi)\in \Cob_G((h_-,\psi_-),(h_+,\psi_+))$,  we have
\begin{equation} \label{eq:monoidality_for_Reid}
\Reid\big( (M,\varphi) \otimes (N,\psi)\big) = \Reid(M,\varphi) \otimes \Reid(N,\psi).
\end{equation}
\end{lemma}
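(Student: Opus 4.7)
The plan is to follow the strategy of the proof of Lemma \ref{lem:monoidality} for the Alexander functor, replacing the Alexander function $\mathcal{A}$ by the Reidemeister function $\mathcal{R}$. The one substantive new ingredient is a multiplicativity property of $\mathcal{R}$ under boundary-connected sum, which takes the place of the block-diagonal presentation argument used in the $\Alex$-case.

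First, I would adopt the notation of Lemma \ref{lem:monoidality} verbatim (writing $g = g_+ + g_-$, $h = h_+ + h_-$, and defining $H^M$, $H^N$, $H$, $H_\pm^M$, $H_\pm^N$, $H_\pm$ analogously) and fix volume forms $\vol^M$, $\vol^N$, $\vol$ related by \eqref{eq:vol_vol}. Next, I handle the degenerate case: by Lemma \ref{lem:homology}, $\dim H^M \geq g$ with equality needed for $\mathcal{R}_M^\varphi \neq 0$, and similarly for $N$. Since the Mayer--Vietoris theorem applied to $M \sharp_\partial N$ with intersection a disk deformation retracting onto $I$ gives $H^M \oplus H^N \cong H$, the strict inequality $\dim H^M > g$ forces $\dim H > g+h$, so both sides of \eqref{eq:monoidality_for_Reid} vanish. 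We may therefore assume $\dim H^M = g$ and $\dim H^N = h$.

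The crucial new step is the multiplicativity of $\mathcal{R}$. The idea is to choose cell decompositions of $M$ and $N$ which agree on a common cell structure of the disk along which they are glued, so as to obtain a cell decomposition of $M \sharp_\partial N$ and a canonical identification of based $\F$-chain complexes
\begin{equation*}
C^{\varphi \oplus \psi}(M \sharp_\partial N, I) \;=\; C^\varphi(M,I) \oplus C^\psi(N,I).
\end{equation*}
Applying the multiplicativity of the torsion under direct sums of based chain complexes (recalled in the appendix on torsion of chain complexes), for any homological bases $(h_1^M,\dots,h_g^M)$ of $H^M$ and $(h_1^N,\dots,h_h^N)$ of $H^N$ one obtains
\begin{equation*}
\mathcal{R}_{M \sharp_\partial N}^{\varphi \oplus \psi}\bigl(h_1^M \wedge \cdots \wedge h_g^M \wedge h_1^N \wedge \cdots \wedge h_h^N\bigr)
\;=\; \mathcal{R}_M^\varphi(h^M) \cdot \mathcal{R}_N^\psi(h^N) \pmod{\pm G},
\end{equation*}
which is the analogue of the determinantal identity implicit in the $\Alex$-case.

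With this multiplicativity in hand, the remainder of the proof proceeds exactly as in Lemma \ref{lem:monoidality}. For $x = x^M \otimes x^N \in \Lambda^i H_-^M \otimes \Lambda^j H_-^N$ and $y = y^M \otimes y^N \in \Lambda^p H_+^M \otimes \Lambda^q H_+^N$ with $p+q = (g+h)-(i+j)$, the vanishing arguments for $p \neq g-i$ transcribe verbatim (they rely only on $\dim H^M = g$, $\dim H^N = h$, and exterior-algebra degree counts), and the case $p = g-i$, $q = h-j$ reduces via the above multiplicativity and the compatibility \eqref{eq:vol_vol} of volume forms to the same Koszul-sign computation as at the end of the proof of Lemma \ref{lem:monoidality}. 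The main obstacle is the justification of the direct-sum decomposition of the relative chain complex with compatible integral bases and the correct handling of sign ambiguities; fortunately, every sign that appears is an element of $\pm G$ independent of $x$ and $y$, so it is absorbed into the ambiguity built into $\PVect_{\F,\pm G}$, and the identity \eqref{eq:monoidality_for_Reid} follows.
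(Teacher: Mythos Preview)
Your overall strategy is correct and matches the paper's proof closely: reduce to the non-degenerate case via Mayer--Vietoris, handle the degree mismatches exactly as in Lemma~\ref{lem:monoidality}, and replace the block-diagonal presentation argument by a multiplicativity property of the Reidemeister torsion. The one inaccuracy is your claimed direct-sum decomposition
\[
C^{\varphi \oplus \psi}(M \sharp_\partial N, I) \;=\; C^\varphi(M,I) \oplus C^\psi(N,I).
\]
This is not literally true for compatible cell structures: the relative cells of the gluing disk $D:=M\cap N$ (relative to $I$) appear once on the left but twice on the right, since $D$ is a $2$-disk while $I$ is $1$-dimensional. What actually holds is the short exact sequence
\[
0 \longrightarrow C^{\varphi\oplus\psi}(D,I) \longrightarrow C^{\varphi}(M,I)\oplus C^{\psi}(N,I) \longrightarrow C^{\varphi\oplus\psi}(M\sharp_\partial N,I) \longrightarrow 0,
\]
and this is exactly how the paper proceeds. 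One then applies the full multiplicativity theorem for torsions (Theorem~\ref{th:multiplicativity}, together with Example~\ref{ex:direct_sum} for the middle term), observes that the long exact sequence in homology has torsion $1$ by the choice of $k,k^M,k^N$, and that $\tau(D;d)\in\pm G$ because the disk collapses to $I$. This yields your displayed multiplicativity formula for $\mathcal{R}$ up to $\pm G$, and the rest of your argument then goes through verbatim. (Alternatively, you could first collapse $D$ to $I$ --- a simple homotopy equivalence --- and then your direct-sum statement becomes correct for the quotient; but you would need to say this explicitly.) The paper also inserts an extra sentence handling the case where $k$ fails to be a basis of $H$, checking that both sides vanish; you should include this as well.
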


\begin{proof}
We set $g:=g_++g_-$, $h:=h_++h_-$, $\delta\!g:=g_+-g_-$, $\delta\!h:=h_+-h_-$ and  
$$
H_\pm^M:=H_1^{\varphi_\pm}(F_{g_\pm},\star), \ H_\pm^N:=H_1^{\psi_\pm}(F_{h_\pm},\star), \
H_\pm:=H_1^{\varphi_\pm \oplus \psi_\pm}(F_{g_\pm+ h_\pm},\star),
$$
$$
H^M := H_1^{\varphi}(M,I), \quad H^N:= H_1^{\psi}(N,I), \quad H:= H_1^{\varphi \oplus \psi}(M \sharp_\partial N, I).
$$
Since $M$ and $N$ intersect in $M\sharp_\partial N$ along a $2$-disk which retracts onto $I$, 
the Mayer--Vietoris theorem gives an isomorphism $ H^M \oplus H^N \stackrel{\simeq} \longrightarrow H$.
If $\dim(H^M)>g$, then $\mathcal{R}^\varphi_M=0$ by definition, so that $\Reid(M,\varphi)=0$; moreover, 
$$
\dim(H) = \dim(H^M) +  \dim(H^N) > g+h
$$
so that $\Reid\big( (M,\varphi) \otimes (N,\psi)\big)=0$ as well, and \eqref{eq:monoidality_for_Reid} trivially holds true in that case.
Therefore, we can assume that $\dim(H^M)=g$ and  $\dim(H^N)=h$.

Let $x^M=(x_1^M,\dots,x_i^M)$ be a family of  vectors in $H_-^M$ and let $x^N=(x_1^N,\dots,x_j^N)$ be a family of  vectors in $H_-^N$.
We consider the element 
$$
x:= x^M \otimes x^N \in \Lambda^{i}  H_-^M  \otimes \Lambda^j H_-^N \subset \Lambda^{i+j} \big(H_-^M \oplus H_-^N\big) = \Lambda^{i+j} H_- .
$$
We aim at showing  that $r := \Reid \big( (M,\varphi) \otimes  (N,\psi) \big) (x)$ is equal to 
$$
r' := \big( \Reid  (M,\varphi) \otimes   \Reid (N,\psi) \big) (x) = (-1)^{i \delta\!h}\cdot  \Reid (M,\varphi) (x^M) \otimes \Reid (N,\psi) (x^N).
$$
It is enough to prove that, for any integers $p,q\geq 0$ such that $p+q= (g+h)-(i+j)$
and for any families  $y^M=(y^M_1,\dots, y^M_p) \subset H_+^M$ and $y^N=(y^N_1,\dots, y^N_q)  \subset H_+^N$, we have
\begin{equation}\label{eq:r=r'}
\vol    (r\wedge y)= \vol  (r'\wedge y)
\end{equation}
where  $y := y^M \otimes y^N \in  \Lambda^{p} H_+^M \otimes \Lambda^{q} H_+^N \subset \Lambda^{p+q}H_+$.
In fact, we only need to prove \eqref{eq:r=r'}  up to multiplication by an element of $\pm G$, provided this factor is  independent of $i,j,p,q,x$ and $y$.

In the sequel, we fix  integral volume forms $\vol^M$ and $\vol^N$ on $H^M_+$ and $H^N_+$ respectively,
and we assume that the volume form  $\vol$ on $H_+=H^M_+\oplus H^N_+$ is defined by 
\begin{equation} \label{eq:vol_vol_bis}
\vol(u \wedge v) = \vol^M(u) \cdot \vol^N(v) 
\end{equation}
for any $u  \in \Lambda^{2g_+} H_+^M,\, v \in \Lambda^{2h_+} H_+^N$. (So $\vol$ is integral too.)
By definition of $\Reid$, we have
\begin{equation}\label{eq:vol(r.y)}
 \vol ( r \wedge y)   = \mathcal{R}_{M \sharp_\partial  N}^{\varphi \oplus \psi} \left( \Lambda^i  m_-(x^M) \wedge  \Lambda^j  n_-(x^N) 
\wedge \Lambda^{p}  m_+(y^M) \wedge \Lambda^{q} n_+(y^N) \right).
\end{equation}
If $p>g-i$, then we have $i+p> \dim (H^M)$ by our assumptions
and we obtain $\Lambda^i  m_-(x^M) \wedge \Lambda^{p}  m_+(y^M)=0 \in \Lambda^{i+p} H^M$;
we deduce that $\vol (r\wedge y)=0$; on the other hand,  
the degree of the multivector $\Reid(M,\varphi) (x^M)\wedge y^M \in \Lambda H_+^M$ is $i+\delta\!g+ p>2g_+$
so that $\vol  (r'\wedge y)=0$ as well; thus \eqref{eq:r=r'} trivially holds true if $p>g-i$.
If $p<g-i$, then $q>h-j$ and the same conclusion applies. Therefore, we can assume that $p=g-i$ and $q=h-j$ in the sequel.

Since $H^M \oplus H^N\simeq H$, $k:=\big(m_- (x^M), m_+(y^M), n_- (x^N), n_+(y^N)\big)$ is a basis of $H$ 
if, and only if, the families $k^M:=\big(m_- (x^M), m_+(y^M)\big)$ and $k^N:=\big(n_- (x^N), n_+(y^N)\big)$ are basis of $H^M$  and  $H^N$ respectively.
If the former condition is not satisfied, then $\vol  ( r \wedge y)$ is zero by \eqref{eq:vol(r.y)} and, if the latter condition is not satisfied, 
then  $\vol ( r' \wedge y)$ is trivial as well since we have 
\begin{eqnarray}
\notag  \vol ( r' \wedge y)  &=& (-1)^{i\delta\!h} \vol \left( \Reid (M,\varphi) (x^M) \wedge \Reid (N,\psi) (x^N) \wedge y^M \wedge y^N \right) \\
\notag &=& (-1)^{i\delta\!h+p(j+\delta\! h)} \vol \left( \Reid (M,\varphi) (x^M) \wedge y^M \wedge \Reid (N,\psi) (x^N)  \wedge y^N \right) \\
\notag &\stackrel{\eqref{eq:vol_vol_bis}}{=}&(-1)^{gh+pj} \vol^M\! \left( \Reid (M,\varphi) (x^M)  \wedge y^M  \right)  \cdot  \vol^N\! \left(  \Reid (N,\psi) (x^N) \wedge y^N \right) 
\end{eqnarray}
or, equivalently,
\begin{eqnarray}
\label{eq:vol(r'.y)}  \vol ( r' \wedge y)   &=& (-1)^{gh+pj} \mathcal{R}_M^\varphi \left( \Lambda^i m_-(x^M) \wedge \Lambda^{g-i} m_+(y^M) \right) \\
\notag && \qquad \qquad  \cdot \mathcal{R}_N^\psi \left( \Lambda^j n_-(x^N) \wedge \Lambda^{h-j} n_+(y^N) \right).
\end{eqnarray}
Therefore, we can assume in the sequel  that  $k$ is a basis of $H$. 

Consider next the twisted cell chain complexes $C:=C^{\varphi \oplus \psi}(M\sharp_\partial N,I)$, 
$C^M:=C^{\varphi }(M,I)$ and $C^N:=C^{\psi }(N,I)$. There is a short exact sequence of $\F$-chain complexes
\begin{equation}\label{eq:ses_DCCC}
\xymatrix{
0 \ar[r] & D \ar[r] & C^M \oplus C^N \ar[r] & C \ar[r] & 0
}
\end{equation}
where $D$ is the (un-)twisted cell chain complex of the disk $M\cap N \subset  M\sharp_\partial N$ relatively to~$I$. Clearly, $D$ is acyclic.
By the multiplicativity property of torsions (see Theorem~\ref{th:multiplicativity} and Example \ref{ex:direct_sum}), we obtain
$$
\varepsilon \cdot \tau(C;c,k) \cdot \tau(D;d) \cdot \tau\left(\mathcal{H};((k^M,k^N),k) \right)\\
 =  \tau\big(C^M;c^M,k^M\big) \cdot \tau\big(C^N;c^N,k^N\big)
 $$
for some appropriate choices  of ordered/oriented lifts of the relative cells,  which result in bases $c,d,c^M,c^N$ of the chain complexes.
Here $\varepsilon$ is a sign not depending on $i,j,p,q,x,y$,
and  $\mathcal{H}$ is the  long exact sequence in homology 
$$
0 \longrightarrow \cdots   \longrightarrow  0  \longrightarrow H^M \oplus H^N  \longrightarrow H  \longrightarrow 0  \longrightarrow 0   \longrightarrow  0
$$ 
induced by \eqref{eq:ses_DCCC}, which we view as a finite acyclic $\F$-chain complex concentrated in degrees $3,4$ and with basis $\big((k^M,k^N),k\big)$.
By definition of $k$, $k^M$ and $k^N$, we have 
$
\tau\big(\mathcal{H};((k^M,k^N),k)\big) =1
$
and, since the intersection disk $M\cap N$ can be reduced to $I$ by elementary collapses, the scalar $T:=\tau(D;d)$ belongs to $\pm G$. We conclude that
\begin{eqnarray*}
\vol ( r \wedge y) &\stackrel{\eqref{eq:vol(r.y)}}{=}& (-1)^{pj} \cdot  \tau\left(C;c,k\right) \\
&=&  (-1)^{pj} \varepsilon T^{-1} \cdot \tau\big(C^M;c^M,k^M\big) \cdot \tau\big(C^N;c^N,k^N\big) \\
& \stackrel{\eqref{eq:vol(r'.y)}}{=} & (-1)^{gh} \varepsilon T^{-1} \cdot \vol   ( r' \wedge y). 
\end{eqnarray*}

\up
\end{proof}  
 
\begin{lemma} \label{lem:functoriality_for_Reid}
For any morphisms $(M,\varphi)\in \Cob_G((g_-,\varphi_-),(g_+,\varphi_+))$ 
and $(N,\psi)\in \Cob_G((h_-,\psi_-),(h_+,\psi_+))$ such that $(g_+,\varphi_+)= (h_-,\psi_-)$, we have
\begin{equation}\label{eq:functoriality_for_Reid}
\Reid\big((N,\psi) \circ (M,\varphi)\big) = \Reid(N,\psi) \circ \Reid(M,\varphi).
\end{equation}
\end{lemma}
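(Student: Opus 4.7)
The plan is to adapt the torsion-multiplicativity strategy used in the proof of Lemma~\ref{lem:monoidality_for_Reid}, but now applied to the gluing $N\circ M = M \cup_\Sigma N$ along the middle surface $\Sigma := F_{g_+} = F_{h_-}$ rather than along a $2$-disk. Parallel to the proof of Lemma~\ref{lem:functoriality}, I would set
$$
g:=g_-+g_+, \quad h:=h_-+h_+, \quad f:=g_-+h_+, \quad \delta\!g:=g_+-g_-,
$$
and introduce the twisted homology modules $H_\pm^M, H_\pm^N, H^M, H^N$ and $H:=H_1^{\psi+\varphi}(N\circ M,I)$, along with $H^\Sigma := H_1^{\varphi_+}(\Sigma,\star)$; the compatibility $\varphi\circ m_{+,*} = \varphi_+ = \psi\circ n_{-,*}$ ensures that the twisted coefficients match on $\Sigma$. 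The degenerate cases $\dim H^M>g$ and $\dim H^N>h$ would be treated first: by Mayer--Vietoris, either forces $\dim H>f$, so both sides of \eqref{eq:functoriality_for_Reid} vanish. Thus I may assume $\dim H^M=g$ and $\dim H^N=h$.

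I would then choose a smooth triangulation of $N\circ M$ restricting to compatible triangulations of $M$, $N$, and $\Sigma$; by Mayer--Vietoris it yields a short exact sequence of twisted cell chain complexes over $\F$:
$$
0 \longrightarrow C^{\varphi_+}(\Sigma,\star) \longrightarrow C^\varphi(M,I_M) \oplus C^\psi(N,I_N) \longrightarrow C^{\psi+\varphi}(N\circ M,I) \longrightarrow 0,
$$
where $I_M$ and $I_N$ denote the two halves of $I = I_M \cup_\star I_N$. Fix a basis $v=(v_1,\dots,v_{2g_+})$ of $H^\Sigma$ compatible with the chosen integral volume forms, and fix $x\in\Lambda^j H_-^M$, $y\in\Lambda^{f-j}H_+^N$. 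For each subset $P\subset\{1,\dots,2g_+\}$ of cardinality $g-j$, I would arrange cellular bases of $C^\varphi(M,I_M)$ and $C^\psi(N,I_N)$ realising the homological bases $m_+(v_P)\wedge\Lambda^j m_-(x)$ and $n_-(v_{\overline P})\wedge\Lambda^{f-j}n_+(y)$ respectively, together with a matching cellular basis of $C^{\varphi_+}(\Sigma,\star)$ realising $v$. Applying the multiplicativity theorem for torsion (Theorem~\ref{th:multiplicativity}) to the short exact sequence above produces an identity expressing $\mathcal{R}_{N\circ M}^{\psi+\varphi}(\Lambda^j m_-(x)\wedge\Lambda^{f-j}n_+(y))$ as a signed sum over complementary subsets $P,\overline{P}$ of the products $\mathcal{R}_M^\varphi(m_+(v_P)\wedge\Lambda^j m_-(x))\cdot\mathcal{R}_N^\psi(n_-(v_{\overline P})\wedge\Lambda^{f-j}n_+(y))$, up to a global unit in $\pm G$ absorbing the torsions of $C^{\varphi_+}(\Sigma,\star)$ and of the induced long exact sequence in homology (both chosen to be independent of $j,x,y$). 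The reconstruction formula used at the end of the proof of Lemma~\ref{lem:functoriality}, which expresses an arbitrary $z\in\Lambda^{j+\delta\!g} H^M_+$ via its pairings $\vol(z\wedge v_P)$, then rewrites this sum as $\vol(\Reid(N,\psi)(\Reid(M,\varphi)(x))\wedge y)$, and \eqref{eq:functoriality_for_Reid} follows.

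The main obstacle will be the bookkeeping in the middle step: verifying that the torsion of $C^{\varphi_+}(\Sigma,\star)$ together with that of the induced long exact homology sequence contribute only a uniform unit of $\pm G$ (independent of $x$, $y$, and $j$), and that the Koszul signs from rearranging the cellular bases assemble consistently on both sides of the multiplicativity identity. This is a direct generalisation of the sign-tracking carried out in Lemma~\ref{lem:monoidality_for_Reid}, the only additional complication being that the gluing locus $\Sigma$ is now a surface of positive genus rather than a $2$-disk, so that its twisted chain complex carries nontrivial homology in degree $1$.
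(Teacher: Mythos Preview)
Your overall architecture is right---multiplicativity for the Mayer--Vietoris sequence along the middle surface $\Sigma$, followed by the reconstruction formula---but there is a genuine gap in the ``bookkeeping'' step, and it is not just a matter of sign-tracking.

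The claim that the torsion of the induced long exact sequence $\mathcal{H}$ contributes only a uniform unit of $\pm G$ is false. Under your assumptions ($\dim H^M=g$, $\dim H^N=h$, $\dim H=f$), the sequence $\mathcal{H}$ is concentrated in degrees $3,4,5$ and its torsion is an honest determinant
\[
\tau\big(\mathcal{H};(v,(k^N,k^M),k)\big) \;=\; \left[\frac{(k^N,k^M)}{\big((-n_-,m_+)(v),\ \text{lift of }k\text{ to }H^N\oplus H^M\big)}\right]^{-1},
\]
which genuinely depends on the homological bases $k^M,k^N$ you feed in. When you specialise to $k^M=(m_+(v_P),m_-(x))$ and $k^N=(n_-(v_{\overline P}),n_+(y))$, this determinant varies with $P$; call it $D_P^{-1}$. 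Multiplicativity then gives, for each $P$ separately, an identity of the shape
\[
\mathcal{R}_{N\circ M}^{\psi+\varphi}(k)\cdot D_P \;=\; \beta_v\cdot \mathcal{R}_M^\varphi(k^M_P)\cdot \mathcal{R}_N^\psi(k^N_P),
\]
with $\beta_v\in\pm G$ independent of $P,j,x,y$. This is not yet a sum formula. The missing step---and this is the heart of the paper's argument---is the multilinear expansion
\[
1 \;=\; \left[\frac{\big((-n_-,m_+)(v),\,m_-(x),\,n_+(y)\big)}{\big((-n_-,m_+)(v),\ \text{lift of }k\big)}\right]
\;=\; \pm\sum_{|P|=g-j}\varepsilon'_P\, D_P,
\]
obtained by expanding each column $-n_-(v_i)+m_+(v_i)$. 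Only after summing the per-$P$ identities against this expansion does the left side collapse to $\mathcal{R}_{N\circ M}^{\psi+\varphi}(k)$ and the right side to the desired sum.

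You also omit two degenerate cases that do not reduce to $\dim H^M>g$ or $\dim H^N>h$. First, one may have $\dim H^M=g$, $\dim H^N=h$ but $K:=H_2^{\psi+\varphi}(N\circ M,I)\neq 0$; then $\mathcal{H}$ produces a nonzero $w\in V$ with $m_+(w)=0=n_-(w)$, and one must exploit the freedom in the basis $v$ (take $v_1=w$) to see that every summand on the right of \eqref{eq:RN_RM} vanishes. Second, even when $K=0$, the family $k=(m_-(x),n_+(y))$ need not be a basis of $H$; the paper again handles this by a judicious choice of $v$. Both cases require an argument beyond ``uniform unit''.
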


  The next subsection is devoted to the proof of Lemma \ref{lem:functoriality_for_Reid}.

\subsection{Proof of the functoriality of $\Reid$}

We  use the notations of Lemma \ref{lem:functoriality_for_Reid} and we set 
$$
g:=g_-+g_+, \quad h:=h_-+h_+,  \quad f:= g_-+h_+,
$$
$$
\delta\!g := g_+-g_-, \quad \delta\!h := h_+-h_-, \quad \delta\!f := h_+-g_-,
$$
$$
H^M:=H_1^{\varphi}(M,I), \quad H^N:=H_1^{\psi}(N,I), \quad H:=H_1^{\psi + \varphi}(N \circ M,I),
$$
$$
K^M:=H_2^{\varphi}(M,I), \quad K^N:=H_2^{\psi}(N,I), \quad K:=H_2^{\psi + \varphi}(N \circ M,I),
$$
$$
H_-:=H_1^{\varphi_-}(F_{g_-},\star), \quad V :=H_1^{\varphi_+}(F_{g_+},\star), \quad  H_+ :=H_1^{\psi_+}(F_{h_+},\star). 
$$
Since $N \circ M$ is obtained from $M$ and $N$ by identifying $\partial_+ M$ to $\partial_- N$,
there is a short exact sequence of  chain complexes
\begin{equation}\label{eq:ses_DCCC'_}
 0 \longrightarrow \underbrace{C^{\varphi_+}(F_{g_+},\star)}_{D:=} \longrightarrow 
 \underbrace{C^{\psi}(N,I)}_{C^N:=} \oplus \underbrace{C^{\varphi}(M,I)}_{C^M:=}
 \longrightarrow \underbrace{C^{\psi + \varphi}(N \circ M,I)}_{C:=} \longrightarrow 0. 
\end{equation}
Let $\mathcal{H}$ be  the corresponding long exact sequence in homology:
$$
 0 \to \cdots \to 0 \to    K^N \oplus K^M\! \to   K \to   V \stackrel{(-n_- , m_+)} \longrightarrow H^N \oplus H^M \!\to  H \to  0 \to  0 \to  0.
$$
If $K^M\neq 0$, then $\dim(H^M) >g$ by Lemma \ref{lem:homology} so that $\mathcal{R}_M^\varphi=0$ and $\Reid(M,\varphi)=0$;
besides, the long exact sequence $\mathcal{H}$ implies that $K\neq 0$ so that $\Reid((N,\psi) \circ (M,\varphi))=0$; 
therefore, \eqref{eq:functoriality_for_Reid} trivially holds true in that case. If $K^N \neq 0$, the same conclusion applies.
So, we can assume  that $K^M=0$ and $K^N=0$ or, equivalently, $\dim H^M=g$ and $\dim H^N=h$.

Let $j\in \{0,\dots,f\}$, and let $x=(x_1 , \dots, x_j)$ and $y=(y_1,\dots,y_{f-j})$ be families  of vectors in $H_-$ and $H_+$ respectively.   
Let $v=(v_1,\dots,v_{2g_+})$ be an arbitrary basis of $V$ and let $\vol^v: \Lambda^{2g_+} V\to \F$ be the volume form such that $\vol^v(v_1\wedge \cdots \wedge v_{2g_+})=1$;
there exists an $\alpha_v\in \F\setminus \{0\}$ such that $\vol = \alpha_v \cdot \vol^v$ is the integral volume form chosen in the definition of the functor $\Reid$. 
We have $\Reid(M,\varphi)(x) \in \Lambda^{j+\delta\!g} V$, hence
$$
\Reid(M,\varphi)(x)  = \sum_{ \vert P\vert =g-j }  \varepsilon_{\overline{P}}  \cdot\vol^v \big(\Reid(M,\varphi)(x) \wedge v_P\big)\cdot v_{\overline{P}}
$$
where the sum is taken over all subsets  $P \subset \{1,\dots, 2 g_+ \}$ of cardinality $g-j$, $\overline{P}$ denotes the complement of $P$, 
$v_{P}$  (respectively $v_{\overline{P}}$) is the wedge of the $v_i$'s for $i\in P$ (respectively $i \in \overline{P}$),
 and $\varepsilon_{\overline P}$ is the signature of the permutation $ \overline{P}P$ 
(where the elements of $\overline P$ in increasing order are followed by the elements of ${P}$ in increasing order).
We deduce that
\begin{eqnarray}
\label{eq:RN_RM}   && \quad \vol   \left( \Reid(N,\psi)\big( \Reid(M,\varphi)(x) \big) \wedge y\right) \\
\notag  &=& \mathcal{R}_N^\psi\left( \Lambda^{j+\delta\!g} n_- \Reid(M,\varphi) (x) \wedge \Lambda^{f-j}  n_+(y)\right) \\
\notag &=& \mathcal{R}_N^\psi\Big( \sum_{ \vert P \vert = g-j}
 \varepsilon_{\overline P} \cdot \vol^v\big(  \Reid(M,\varphi)(x)\wedge v_P \big) \cdot \Lambda^{j+\delta\!g} n_- (v_{\overline{P}})\wedge \Lambda^{f-j} n_+(y)\Big) \\
\notag &=& \alpha_v^{-1} \mathcal{R}_N^\psi\Big( \sum_{ \vert P \vert = g-j}
\varepsilon'_P \cdot \mathcal{R}_M^\varphi\big(\Lambda^{g-j} m_+(v_P) \wedge \Lambda^j m_-(x)  \big) \cdot \Lambda^{j+\delta\! g} n_- (v_{\overline{P}})\wedge \Lambda^{f-j} n_+(y)\Big) \\
\notag &=& \alpha_v^{-1}\sum_{ \vert P \vert = g-j}
\varepsilon'_P \cdot \mathcal{R}_M^\varphi \big(\Lambda^{g-j} m_+(v_P) \wedge \Lambda^j m_-(x)  \big) \cdot  \mathcal{R}_N^\psi\big( \Lambda^{j+\delta\! g} n_- (v_{\overline{P}})\wedge \Lambda^{f-j}  n_+(y)\big)
\end{eqnarray}
where $\varepsilon'_P := \varepsilon_{\overline P} \cdot (-1)^{j(g-j)}$.
If $K\neq 0$, then $\Reid\big((N,\psi)\circ (M,\varphi)\big)=0$; besides,
 the long exact sequence in homology $\mathcal{H}$ shows that there exists a $w\in V\setminus \{0\}$ such that $n_-(w)=0 \in H^N$ and $m_+(w) =0\in H^M$;
since the  basis $v$ of $V$ is arbitrary in \eqref{eq:RN_RM}, we can assume that $v_1=w$. In the last sum indexed by  $P$, 
the vector $w$ appears either in $v_P$ or in $v_{\overline P}$, so that the corresponding summand is always zero;
it follows that $\Reid(N,\psi)\big(\Reid(M,\varphi)(x) \big) \wedge y=0$ for any $x \in \Lambda^j H_-$ and $y \in \Lambda^{f-j} H_+$;
therefore, \eqref{eq:functoriality_for_Reid} trivially holds true in that case. Thus, we can  assume in the sequel that $K=0$ or, equivalently, $\dim H= f$.

It now remains to prove using the above assumptions that, for any families of vectors $x=(x_1 , \dots, x_j)$ in $H_-$ and $y=(y_1,\dots,y_{f-j})$ in $H_+$,
\begin{eqnarray}
\label{eq:RNM}   & &\vol   \left( \Reid\big((N,\psi) \circ (M,\varphi)\big)(x)  \wedge y\right)\\
\notag &=&  \alpha_v^{-1}\sum_{ \vert P \vert = g-j}
\varepsilon'_P \cdot \mathcal{R}_M^\varphi \big(\Lambda^{g-j} m_+(v_P) \wedge \Lambda^j m_-(x)  \big) 
\cdot  \mathcal{R}_N^\psi\big( \Lambda^{j+\delta\! g} n_- (v_{\overline{P}})\wedge \Lambda^{f-j}  n_+(y)\big)
\end{eqnarray}
where, as in the previous paragraph, $v$ is an arbitrary basis of $V$. Assume firstly that $k:=(m_-(x), n_+(y))$ is not a basis of $H$. 
Then 
$$
\Reid\big((N,\psi) \circ (M,\varphi)\big)(x)  \wedge y = \mathcal{R}_{N \circ M}^{\psi + \varphi}\big(\Lambda^j m_-(x) \wedge \Lambda^{f-j} n_+(y)\big)
$$ 
is zero. Besides, the long exact sequence $\mathcal{H}$ implies that there exists $w\in V$ such that 
\begin{eqnarray*}
m_+(w)&=& a_1  m_-(x_1) + \cdots + a_{j} m_-(x_{j}) \in H^M, \\
-n_-(w) &=& b_1  n_+(y_1) + \cdots + b_{f-j} n_+(y_{f-j}) \in H^N
\end{eqnarray*}
where $a_1,\dots, a_j,b_1,\dots,b_{f-j} \in \F$ are not all zeroes.
If $w=0$,  then we have $\Lambda^j m_-(x) =0 \in \Lambda^j H^M$ or $\Lambda^{f-j} n_+(y) =0 \in \Lambda^{f-j} H^N$
(depending on whether we can find a non-zero scalar among the $a_i$'s or among the $b_i$'s); 
in both cases, the second term of \eqref{eq:RNM} is trivial. If $w\neq0$, then we take a basis $v$ of $V$ such that $v_1=w$
and we easily see that the second term of \eqref{eq:RNM} is trivial in that case too. Therefore, we can assume in the sequel that $k=(m_-(x), n_+(y))$ is a basis of $H$.

We  now fix a basis $v=(v_1,\dots,v_{2g_+})$ of $V$ such that  $ \vol (v)=1$ and we prove \eqref{eq:RNM} with $\alpha_v=1$.
Let also $k^M$ and $k^N$ be arbitrary bases of $H^M$ and $H^N$, respectively.
By the multiplicativity property of  torsions  (see Theorem \ref{th:multiplicativity} and Example \ref{ex:direct_sum}), we deduce from  \eqref{eq:ses_DCCC'_}  that
\begin{eqnarray}
\label{eq:many_torsions} & &\tau ( D; d , v) \cdot \tau (C; c,k) \cdot \tau\!\left(\mathcal{H}; \big(v,(k^N,k^M),k\big)\right) \\
\notag & = & \pm  \tau(C^N;c^N,k^N) \cdot  \tau(C^M;c^M,k^M)  \ \in \F
\end{eqnarray}
for some appropriate choices  of ordered/oriented lifts of the relative cells,  which result in bases $d,c,c^M,c^N$ of the chain complexes.
The sign appearing in \eqref{eq:many_torsions} only depends on the dimensions of the complexes $C,D,C^M,C^N$ and the dimensions of their homology groups.
The sequence $\mathcal{H}$ is viewed  here as  a finite acyclic  $\F$-chain complex concentrated in degrees $3,4,5$; its torsion is
\begin{eqnarray*} 
 \tau\left(\mathcal{H}; \big(v,(k^N,k^M),k\big) \right) 
&=&  \left[\frac{\big((-n_-,m_+)(v),\hbox{\scriptsize lift of $k$ to } H^N \oplus H^M  \big)}{(k^N,k^M)}\right]^{-1}\\
&=&  \left[\frac{(k^N,k^M)}{\big((-n_-,m_+)(v),\hbox{\scriptsize lift of $k$ to } H^N \oplus H^M  \big)}\right],
\end{eqnarray*} 
where the symbol $\left[\frac{a}{b}\right]$ stands for the determinant of the square matrix expressing a family of vectors $a$ in the basis $b$ of $H^N \oplus H^M$.
We have $\tau ( D; d , v)\in \pm G$ since $(F_{g_+},\star)$ has the simple homotopy type of a wedge of circles relative to its vertex.
We deduce from \eqref{eq:many_torsions} that
\begin{eqnarray*}
&&\mathcal{R}_{N \circ M}^{\psi + \varphi } \big( \Lambda^j m_-(x) \wedge \Lambda^{f-j} n_+(y) \big)  \cdot  \left[\frac{(k^N,k^M)}{\big((-n_-,m_+)(v),\hbox{\scriptsize lift of $k$ to } H^N \oplus H^M  \big)}\right]\\
&=& \beta_v \cdot \mathcal{R}_M^{\varphi} ( k^M) \cdot  \mathcal{R}_N^{\psi} ( k^N) 
\end{eqnarray*}
where $\beta_v \in \pm G$ does not depend on $j,x,y,k^M,k^N$ (but depends on $v$). 
The previous identity makes sense, and holds true, when $k^M$ is an arbitrary family of $g$ vectors in $H^M$
and $k^N$ is an arbitrary family of $h$ vectors in $H^N$. 
(Indeed, if $k^M$ is not a basis of $H^M$ or $k^N$ is not a basis of $H^N$, then both sides of this identity  are zero.) 
In particular, we obtain for any subset $P \subset \{1,\dots, 2g_+\}$ of cardinality $g-j$
\begin{eqnarray*}
&&\mathcal{R}_{N \circ M}^{\psi + \varphi } \big( \Lambda^j m_-(x) \wedge \Lambda^{f-j}  n_+(y) \big) 
\! \cdot  \! \left[\frac{\big(n_-(v_{\overline P}), n_+(y), m_+(v_P),m_-(x)\big)}{\big((-n_-,m_+)(v),\hbox{\scriptsize lift of $k$ to } H^N \oplus H^M  \big)}\right]\\
&=& \beta_v \cdot \mathcal{R}_M^{\varphi} \big( \Lambda^{g-j} m_+(v_P) \wedge \Lambda^j m_-(x) \big) 
\cdot  \mathcal{R}_N^{\psi} \big(\Lambda^{\delta\!g+j} n_-(v_{\overline P}) \wedge \Lambda^{f-j}  n_+(y)\big). 
\end{eqnarray*}
By multilinearity of the determinant and using the facts that $\dim H^M=g$ and $\dim H^N=h$, we have
\begin{eqnarray*}
1 &=&  \left[\frac{\big(-n_-(v_1)+m_+(v_1),\dots, -n_-(v_{2g_+})+m_+(v_{2g_+}),m_-(x),n_+(y)\big)}{\big((-n_-,m_+)(v),\hbox{lift of $k$ to $H^N \oplus H^M$}  \big)}\right] \\
&=& \sum_{\vert P \vert =g-j}  \varepsilon_P (-1)^{\vert \overline P \vert }
 \left[\frac{\big(m_+(v_P), n_-(v_{\overline P}), m_-(x), n_+(y)\big)}{\big((-n_-,m_+)(v),\hbox{lift of $k$ to $H^N \oplus H^M$}  \big)}\right]\\ 
&=& (-1)^{g(f+1)} \sum_{\vert P \vert =g-j} \varepsilon'_P 
\left[\frac{\big(n_-(v_{\overline P}), n_+(y), m_+(v_P),m_-(x)\big)}{\big((-n_-,m_+)(v),\hbox{lift of $k$ to $H^N \oplus H^M$}  \big)}\right].
\end{eqnarray*}
Thus we obtain identity \eqref{eq:RNM}, up to multiplication by an element of $\pm G$ not depending on $j,x,y$.
This concludes the proof of Lemma \ref{lem:functoriality_for_Reid}.

 \section{Back to the Alexander functor}   \label{sec:back_to_A}

We show in this section that the  functor $\Alex$ is an instance of the functor $\Reid$.

\subsection{A formula for the Reidemeister function} \label{subsec:Fox}

Let $M$ be a compact connected orientable $3$-manifold with connected boundary, and fix a base point $\star \in \partial M$.
Let also $\varphi:H_1(M) \to G$  be a group homomorphism with values in a multiplicative subgroup $G$ of a field $\F$.
We use the same notation as in \S \ref{subsec:R_function}, where we have introduced  $\mathcal{R}_M^\varphi$.

When it does not vanish, the Reidemeister function $\mathcal{R}_M^\varphi$  is  defined as  an alternated product of  $4$ determinants
since the $\F$-chain complex $C^\varphi(M,\star)$ has length $3$.
We now give a recipe to compute it by means of a single determinant using  Fox's free derivatives.
We consider on this purpose  a \emph{spine} $X^+$ of $M$, i.e$.$ a
$2$-dimensional subcomplex $X^+$ of a smooth triangulation of $M$ such that $M$ retracts to $X^+$ by elementary collapses; we also assume that $\star$ is a vertex of $X^+$.
(It is well known that any $3$-manifold with boundary   has a spine: see for instance \cite[Remark 1.1.5]{Matveev}.) 
Next, we choose a maximal tree  in the $1$-skeleton of $X^+$ which contains $\star$, and let $X$ be the $2$-dimensional CW-complex obtained from $X^+$ by collapsing that tree to the vertex $\star$.
Hence  $X$ has a single 0-cell~$\star$.
We denote by  $\gamma_1,\dots, \gamma_{g+r}$ the $1$-cells of $X$ and we denote by $R_1,\dots,R_r$ the $2$-cells of~$X$;
besides, each of these cells is given an arbitrary orientation.
The fundamental group $\pi_1(\Gamma)=\pi_1(\Gamma,\star)$  of  the $1$-skeleton  $\Gamma := \gamma_1 \cup \dots \cup \gamma_{g+r}$  of $X$ 
is freely generated by the oriented loops $\gamma_1, \dots , \gamma_{g+r}$, hence
the free derivatives  $\frac{\partial \ \, }{\partial \gamma_1},\dots , \frac{\partial \ \, }{\partial \gamma_{g+r}}:\Z[\pi_1(\Gamma)] \to \Z[\pi_1(\Gamma)]$ are defined.
Note that the attaching maps of the oriented $2$-cells $R_1,\dots,R_r$  define some elements $\rho_1,\dots,\rho_r \in \pi_1(\Gamma)$. 

\begin{lemma}\label{lem:Fox}
Let $\kappa_1,\dots, \kappa_g$ be oriented loops in $\Gamma$ based at $\star$
and, for all $i\in\{1,\dots,g\}$,  let $k_i \in H\simeq H_1^\varphi(X,\star)$ be the homology class of $1\otimes \widehat \kappa_i \in C_1^\varphi(X,\star)$.
Then
\begin{equation}\label{eq:Fox}
\mathcal{R}_M^\varphi(k_1\wedge \cdots \wedge k_g) 
=  \det\, \varphi\, i_* \begin{pmatrix} 
\frac{\partial \rho_1}{\partial \gamma_1}  & \cdots & \cdots& \cdots & \frac{\partial \rho_1}{\partial \gamma_{g+r}} \\
\vdots & &&& \vdots\\
 \frac{\partial \rho_r}{\partial \gamma_1} & \cdots &  \cdots & \cdots  &\frac{\partial \rho_r}{\partial \gamma_{g+r}} \\
 \frac{\partial \kappa_1}{\partial \gamma_1} & \cdots  & \cdots & \cdots  &\frac{\partial \kappa_1}{\partial \gamma_{g+r}}\\
\vdots & && &\vdots\\
 \frac{\partial \kappa_g}{\partial \gamma_1} & \cdots  & \cdots  & \cdots  & \frac{\partial \kappa_g}{\partial \gamma_{g+r}}
\end{pmatrix} .
\end{equation} 
Here the composition of $\varphi$ with the isomorphism $H_1(M)\simeq H_1(X)$ induced by the homotopy  equivalence $M \simeq X$ is still denoted by $\varphi$,
and the ring  homomorphism $i_*: \Z[\pi_1(\Gamma)] \to \Z[\pi_1(M)]$ is induced by the map $i:\Gamma \to M$ which is the inclusion $\Gamma \subset X$ composed with the homotopy equivalence $X \simeq M$.
\end{lemma}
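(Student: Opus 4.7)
The plan is to reduce the computation of $\mathcal{R}_M^\varphi$ to the twisted cellular chain complex of $(X,\star)$ and then to recognize a standard two-term torsion formula in terms of Fox derivatives. Since $M$ collapses onto the spine $X^+$ and $X$ is obtained from $X^+$ by collapsing a maximal tree (a simple homotopy equivalence), the complexes $C^\varphi(M,\star)$, $C^\varphi(X^+,\star)$ and $C^\varphi(X,\star)$ are simple-homotopy equivalent. By invariance of torsions under simple homotopy equivalence, I may compute $\mathcal{R}_M^\varphi$ on $C^\varphi(X,\star)$; this complex is concentrated in degrees $1$ and $2$, with $C_2 \simeq \F^r$ generated by the lifts $1\otimes \widehat{R_1},\dots, 1\otimes \widehat{R_r}$, and $C_1 \simeq \F^{g+r}$ generated by the lifts $1\otimes \widehat{\gamma_1},\dots, 1\otimes\widehat{\gamma_{g+r}}$.

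The core step is the Fox-calculus identification of both the differential and the chains representing the $k_i$. I would verify, by induction on word length and using the derivation property of $\partial/\partial \gamma_j$, that for any loop $\kappa$ in $\Gamma$ based at $\star$, the lift $\widehat\kappa$ to the maximal abelian cover decomposes as a chain
$$
1\otimes \widehat{\kappa} \;=\; \sum_{j=1}^{g+r} \varphi\,i_*\!\left(\frac{\partial \kappa}{\partial \gamma_j}\right) \cdot (1\otimes\widehat{\gamma_j}) \;\in\; C_1^\varphi(X,\star).
$$
Applied to the relations $\rho_1,\dots,\rho_r$ this gives the matrix of $\partial_2$ in the chosen bases as the $r\times(g+r)$ upper block $A$ of the matrix in \eqref{eq:Fox}; applied to $\kappa_1,\dots,\kappa_g$ it shows that $[1\otimes\widehat{\kappa_i}]=k_i$ and identifies the last $g$ rows of \eqref{eq:Fox} with the coordinates of these cycles.

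Now I distinguish two cases. If $\dim_\F H_1^\varphi(X,\star)>g$, then Lemma \ref{lem:homology} forces $H_2^\varphi(X,\star)\neq 0$; hence $\ker \partial_2\neq 0$, so $\rank A < r$, and expanding the determinant in \eqref{eq:Fox} along the last $g$ rows gives zero, in agreement with $\mathcal{R}_M^\varphi=0$ by definition. If $\dim_\F H_1^\varphi(X,\star)=g$, then $\partial_2$ is injective and $C^\varphi(X,\star)$ is a two-term complex $0 \to C_2 \xrightarrow{A} C_1 \to 0$ whose homology in degree $1$ has basis $(k_1,\dots,k_g)$. Applying the definition of torsion (as recalled in the appendix) to this length-one complex with basis $c$ and homological basis $(k_i)$ produces, up to sign, the determinant of the change of basis from $(1\otimes\widehat{\gamma_j})_j$ to the concatenated family $\bigl(\partial_2(1\otimes\widehat{R_1}),\dots,\partial_2(1\otimes\widehat{R_r}),\,1\otimes\widehat{\kappa_1},\dots,1\otimes\widehat{\kappa_g}\bigr)$, which is exactly the right-hand side of \eqref{eq:Fox}.

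The main subtlety is the Fox-calculus formula displayed above; everything else is routine book-keeping, and the remaining sign and choice-of-lifts ambiguities are absorbed into the $\pm G$ indeterminacy of $\mathcal{R}_M^\varphi$.
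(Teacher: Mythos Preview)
Your proof is correct and follows essentially the same approach as the paper: reduce to the spine $X$ by simple-homotopy invariance of torsion, identify both the boundary matrix $\partial_2$ and the cycle representatives $1\otimes\widehat\kappa_i$ via the Fox-calculus expansion, and then read off the two-term torsion as the determinant of the stacked matrix. The only case you elide is when $\dim H = g$ but $(k_1,\dots,k_g)$ fails to be a basis of $H$; the paper treats this explicitly (both sides vanish since the last $g$ rows are dependent modulo the row span of $A$), but it is a triviality that fits comfortably under your ``routine book-keeping.''
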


\begin{proof}
The lemma is proved in a way similar to Milnor's result relating the Reidemeister torsion of a knot exterior to the Alexander polynomial of the knot \cite[Theorem~4]{Milnor_duality}. 
(See also \cite[Theorem II.1.2]{Turaev_book_dim_3}.)
By assumption, the pair $(M,\star)$ has the simple homotopy type of $(X^+,\star)$
and, using the  multiplicativity property of  torsions  (Theorem~\ref{th:multiplicativity}), 
it can be checked that the Reidemeister torsions of $(X,\star)$ and $(X^+,\star)$ are equal for any choice of homological bases.
Therefore we can safely replace $M$ by $X$ in our computation of $\mathcal{R}_M^\varphi$.
Thus we now consider the $\varphi$-twisted cell chain complex 
$$
C := C^{\varphi}(X,\star) = \F \otimes_{\Z[H_1(X)]} C\big(\widehat X,p_X^{-1}(\star)\big).
$$
The lifts $\widehat \gamma_1,\dots, \widehat \gamma_{g+r}$ of $\gamma_1,\dots,\gamma_{g+r}$  define a basis 
$c_1:=(1 \otimes \widehat \gamma_1,\dots, 1 \otimes \widehat \gamma_{g+r})$ of $C$ in degree $1$.
Similarly, the lifts $\widehat R_1,\dots, \widehat R_r$ of $R_1,\dots, R_r$ that contain $\widehat \star$
define a basis $c_2:=(1 \otimes \widehat R_1,\dots, 1 \otimes \widehat R_{r})$ of $C$ in degree $2$.

Let $A'$ be the square matrix with entries in $\F$ defined by the right-hand side of \eqref{eq:Fox},
and let $A$ be the $r \times (g+r)$ matrix defined by the first $r$ rows of $A'$.
Observe that  $A$ is the matrix of $\partial_2:C_2 \to C_1$ in the bases $c_2$ and $c_1$.
Since $(X,\star)$ has no relative cells in degree~$0$,
$H \simeq H_1(C)$ is the cokernel of the linear map $\F^r \to \F^{g+r}$ defined by the multiplication $v \mapsto v A$.
Assume that $\dim H >g$: then the rank of $A$ is less than~$r$, so that all the minors of $A$ of order $r$ vanish;
by expanding the determinant of $A'$ successively along its last $g$ rows, we obtain that $\det A'=0$ 
and the lemma trivially holds true in that case. Therefore we can assume that $\dim H =g$.

Observe, next, that the last $g$ rows of $A'$ give the vectors $k_1,\dots, k_g\in H\simeq H_1^\varphi(X,\star) $ as linear combinations of 
the generators $[1\otimes \widehat\gamma_1] , \dots, [1\otimes \widehat\gamma_{g+r}]$ of $H_1^\varphi(X,\star) \simeq H$.
If $k:=(k_1,\dots,k_g)$ is not a basis of $H$, then $k_1,\dots, k_g$ are linearly dependent:
since the first $r$ rows of $A'$ give a system of relations for the previous set of generators, 
we deduce that $\det A'=0$ and the lemma is trivially true  in that case too. 
Thus we can assume that $k$ is a basis of $H$.
Let $c$ be the basis of $C$ given by $c_1$ in degree $1$ and $c_2$ in degree $2$.
By Lemma~\ref{lem:homology}, the homology of $C$ is concentrated in degree $1$ and, 
for all $i\in \{1,\dots,g\}$, $1\otimes \widehat \kappa_i$ is a $1$-cycle of $C$ representing  $k_i\in H \simeq H_1(C) $.
So, by definition of the function $\mathcal{R}_M^\varphi$, we get
\begin{eqnarray}
\label{eq:computation_tau}\qquad  \  \mathcal{R}_M^\varphi(k_1 \wedge \cdots \wedge k_g)&= &\tau\left(C; c, k \right) \\
\notag &=& \det\left(\hbox{matrix of $\big(\partial_2(c_2),1\otimes  \widehat \kappa \big)$ in the basis $c_1$}\right).
\end{eqnarray}
The conclusion follows from the previous two observations.
\end{proof}

\begin{remark}
It follows from Lemma \ref{lem:Fox} that the Reidemeister function has the following \emph{integrality} property:
for all $h_1,\dots,h_g \in H_1(M,\star;\Z[H_1(M)])$, we have
$$
\mathcal{R}_M^\varphi\big(\varphi_*(h_1) \wedge \dots \wedge \varphi_*(h_g)\big) \in \varphi\big(\Z[H_1(M)]\big)
$$ 
where $\varphi_*:H_1(M,\star;\Z[H_1(M)]) \to H_1^{\varphi}(M,\star)$ is the canonical map.
\end{remark}

\subsection{Specialization of $\Reid$ to $\Alex$}

We now assume that $G$ is a finitely generated free abelian group, and we denote by $Q(G)$ the field of fractions of $\Z[G]$.
Let  $M$ be a compact connected orientable $3$-manifold with connected boundary, and fix a base point $\star \in \partial M$.
Let $\varphi:H_1(M)\to G$ be a group homomorphism:
 we denote by $\varphi_\Z:\Z[H_1(M)] \to \Z[G]$ and by $\varphi: \Z[H_1(M)]\to Q(G)$   the extensions of $\varphi$ to ring homomorphisms.  
Set 
$$
g:=g(M),  \quad H_\Z :=H_1^{\varphi_\Z}(M,\star), \quad H:=H_1^{\varphi}(M,\star).
$$

\begin{lemma} \label{lem:A_R}  
We have the following commutative diagram:
$$
\xymatrix{
\Lambda^g H_\Z \ar[d]_{\Lambda^g \iota} \ar[r]^-{\mathcal{A}_M^\varphi} & \Z[G] \ar@{^{(}->}[d] \\
\Lambda^g H \ar[r]_-{\mathcal{R}_M^\varphi} & Q(G),
}
$$
where $\iota:H_\Z \to H \simeq Q(G)  \otimes_{\Z[G]} H_\Z$ denotes the canonical map.
\end{lemma}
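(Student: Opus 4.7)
The strategy is to use a single spine of $M$ to simultaneously obtain a presentation of $H_\Z$ suitable for computing $\mathcal{A}_M^\varphi$ and the Fox-derivative formula for $\mathcal{R}_M^\varphi$ given by Lemma~\ref{lem:Fox}, and then to observe that the two resulting determinantal expressions match under the inclusion $\Z[G]\hookrightarrow Q(G)$.

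Concretely, I would choose a spine $X^+$ of $M$ and collapse a maximal tree as in the proof of Lemma~\ref{lem:Fox}, producing a $2$-complex $X$ with a single $0$-cell $\star$, oriented $1$-cells $\gamma_1,\dots,\gamma_{g+r}$, and oriented $2$-cells $R_1,\dots,R_r$ with attaching words $\rho_1,\dots,\rho_r$. The $\Z[G]$-twisted cellular chain complex $C^{\varphi_\Z}(X,\star)$ then yields the deficiency-$g$ presentation
$$ H_\Z=\Big\langle[1\otimes \widehat\gamma_1],\dots,[1\otimes \widehat\gamma_{g+r}] \;\Big|\; \sum_{i}\varphi_\Z i_*\!\Big(\frac{\partial \rho_j}{\partial \gamma_i}\Big)[1\otimes\widehat\gamma_i],\ j=1,\dots,r\Big\rangle, $$
which is admissible for the definition of $\mathcal{A}_M^\varphi$.

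I would then evaluate both sides of the diagram on wedges of the distinguished generators $[1\otimes\widehat\gamma_i]$. By definition of the Alexander function, $\mathcal{A}_M^\varphi\big([1\otimes\widehat\gamma_{j_1}]\wedge\cdots\wedge[1\otimes\widehat\gamma_{j_g}]\big)$ equals, up to sign, the determinant of the $(g+r)\times(g+r)$ matrix obtained by augmenting the Fox matrix $\big(\varphi_\Z i_*(\partial \rho_j/\partial \gamma_i)\big)_{j,i}$ with the $g$ standard basis rows indexed by $j_1,\dots,j_g$. On the Reidemeister side, taking $\kappa_s:=\gamma_{j_s}$ in Lemma~\ref{lem:Fox} (so that $\partial \kappa_s/\partial \gamma_k=\delta_{j_s,k}$), formula~\eqref{eq:Fox} returns the very same determinant for $\mathcal{R}_M^\varphi\big(\iota[1\otimes\widehat\gamma_{j_1}]\wedge\cdots\wedge \iota[1\otimes\widehat\gamma_{j_g}]\big)$, but computed via $\varphi i_*$ instead of $\varphi_\Z i_*$. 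Since $\varphi$ factors as $\varphi_\Z$ followed by the canonical inclusion $\Z[G]\hookrightarrow Q(G)$, the two values agree under this inclusion.

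To finish, note that both $\mathcal{A}_M^\varphi$ and $\mathcal{R}_M^\varphi\circ\Lambda^g\iota$ are $\Z[G]$-multilinear alternating on $H_\Z^g$ (the latter because $\iota$ is $\Z[G]$-linear and $\mathcal{R}_M^\varphi$ is $Q(G)$-linear), while the classes $[1\otimes\widehat\gamma_i]$ generate $H_\Z$ as a $\Z[G]$-module; hence agreement on wedges of these generators extends to the whole of $\Lambda^g H_\Z$. I expect the only delicate point to be matching the $\pm G$ ambiguities and the overall signs on the two sides, which is automatic once the same ordered and oriented lifts of the cells of $(X,\star)$ are used throughout; there is no deeper obstacle.
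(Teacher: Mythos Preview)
Your proposal is correct and follows essentially the same approach as the paper: both set up the same spine $X$, use the resulting Fox matrix as a presentation matrix of $H_\Z$ for the Alexander function, and invoke the Fox-derivative formula \eqref{eq:Fox} (equivalently, the computation \eqref{eq:computation_tau}) for the Reidemeister function, concluding that the two determinants coincide under $\Z[G]\hookrightarrow Q(G)$. The only cosmetic difference is that the paper verifies the identity for arbitrary loops $\kappa_i$ in the $1$-skeleton, whereas you check it on the generators $\gamma_{j_s}$ and extend by multilinearity; both are valid and equivalent.
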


\begin{proof}
We proceed as in \S \ref{subsec:Fox}:  we consider a spine $X^+$ of $M$, 
and we obtain  a $2$-dimensional CW-complex $X$ with a single vertex $\star$  by collapsing a maximal tree in the $1$-skeleton of $X^+$.
The cells  of $X$ are  $\gamma_1,\dots,\gamma_{g+r}$ in dimension $1$, and $R_1,\dots,R_r$ in dimension $2$.
Orient $\gamma_1,\dots,\gamma_{g+r}$ and $R_1,\dots,R_r$ arbitrarily, and  set 
$$
C_\Z :=C^{\varphi_\Z}(X,\star), \qquad C := C^{\varphi}(X,\star) = Q(G) \otimes_{\Z[G]} C_\Z. 
$$
Since $M$ deformation retracts to $X$, $H_\Z$ is isomorphic 
to $H_1^{\varphi_\Z}(X,\star)$ so that $H_\Z$ is the cokernel of $\partial_2:C_{\Z,2} \to C_{\Z,1}$.
Let $\widehat \gamma_1,\dots,\widehat \gamma_{g+r}$ be the preferred lifts of $\gamma_1,\dots,\gamma_{g+r}$ to $\widehat X$,
and let $\widehat R_1,\dots, \widehat R_r$ be the lifts of $R_1,\dots,R_r$ that contain $\widehat \star$:
we denote by $A$ the matrix of $\partial_2$ in the bases  $\big(1 \otimes \widehat R_1,\dots, 1 \otimes \widehat R_r\big)$ 
and  $\big(1 \otimes \widehat \gamma_1,\dots,1 \otimes  \widehat \gamma_{g+r}\big)$.
This presentation matrix of the $\Z[G]$-module $H_\Z$  can be used to compute $\mathcal{A}_M^\varphi$.
Specifically, let $k_1,\dots,k_g\in H_\Z$ and assume that each $k_i$ has the form $[1 \otimes \widehat \kappa_i]$ 
where $\kappa_i$ is an oriented loop in the $1$-skeleton of $X$ based at $\star$: then
$\mathcal{A}_M^\varphi(k_1 \wedge \cdots \wedge k_g)$ is the determinant of the matrix obtained from $A$  by adding 
$g$ rows that express the vectors $1 \otimes \widehat \kappa_1, \dots, 1 \otimes \widehat \kappa_g$ 
in the basis $(1 \otimes \widehat \gamma_1,\dots,1 \otimes \widehat \gamma_{g+r})$ of $C_{\Z,1}$.
We deduce from formula (\ref{eq:computation_tau}) that
$
\mathcal{A}_M^\varphi(k_1\wedge \cdots \wedge k_g) 
= \mathcal{R}_{M}^\varphi\big(\iota(k_1)\wedge \cdots \wedge \iota(k_g)\big).
$
\end{proof}

The next theorem, which compares the Alexander functor to the Reidemeister functor, is a direct application of Lemma \ref{lem:A_R}.

\begin{theorem}\label{th:R_vs_A}
The following diagram is commutative:
$$
\xymatrix @!0 @R=1cm @C=3cm {
& \PMod_{\Z[G],\pm G} \ar@/^2pc/[dd]^-{Q(G) \otimes_{\Z[G]} (-)}  \\
\Cob_G \ar[ru]^-{\Alex}  \ar[rd]_-{\Reid}  & \\
&\PVect_{Q(G),\pm G}
}
$$
\end{theorem}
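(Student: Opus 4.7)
The plan is to verify the commutativity of the diagram separately on objects and on morphisms, with Lemma \ref{lem:A_R} providing the essential input. On objects, both functors assign to $(g,\varphi)$ the exterior algebra of the $\varphi$-twisted relative homology of $(F_g,\star)$. Since $F_g$ collapses to a wedge of $2g$ circles relative to $\star$, the $\Z[G]$-module $H_1^{\varphi_\Z}(F_g,\star)$ is free of rank $2g$, and so extension of scalars to $Q(G)$ commutes with taking exterior powers. Thus $Q(G) \otimes_{\Z[G]} \Alex(g,\varphi) = \Reid(g,\varphi)$ canonically.

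For a morphism $(M,\varphi) \in \Cob_G((g_-,\varphi_-),(g_+,\varphi_+))$, I will show that $\Reid(M,\varphi)$ agrees with $Q(G) \otimes_{\Z[G]} \Alex(M,\varphi)$ up to the usual $\pm G$ ambiguity. The key observation is that $\Alex(M,\varphi)$ and $\Reid(M,\varphi)$ are defined by formally identical recipes: one dualizes against a volume form on $H_1^{\varphi_+}(F_{g_+},\star)$ after feeding $\Lambda m_-(x) \wedge \Lambda m_+(y)$ into the Alexander, respectively Reidemeister, function of $M$. Fix an integral volume form $\vol_\Z$ on the free $\Z[G]$-module $H_1^{\varphi_{+,\Z}}(F_{g_+},\star)$ and let $\vol$ be its extension to $Q(G)$-coefficients; this $\vol$ is integral in the sense required by the definition of $\Reid$. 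Since $I$ deformation retracts to $\star$, Lemma \ref{lem:A_R} applies to $(M,I)$ and gives the commutative square $\mathcal{R}_M^\varphi \circ \Lambda^g\iota = \iota \circ \mathcal{A}_M^\varphi$, where $\iota$ denotes extension of scalars along $\Z[G] \hookrightarrow Q(G)$. Substituting this identity into the two defining relations yields $\Reid(M,\varphi) \circ \Lambda\iota = \Lambda\iota \circ \Alex(M,\varphi)$, which is the sought-after equality.

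The main obstacle is purely bookkeeping: one must confirm that the integral volume form appearing in the definition of $\Reid$ can be taken to be the extension of a $\Z[G]$-integral volume form used to define $\Alex$, and that the $\pm G$ ambiguity of the Alexander function transfers without further scalar to the $\pm G \subset Q(G)^\times$ ambiguity of the Reidemeister function. Both points are formal consequences of the freeness of $H_1^{\varphi_{+,\Z}}(F_{g_+},\star)$ and of the fact, established in the proof of Lemma \ref{lem:A_R}, that both recipes reduce to the same determinant of Fox derivatives once a common spine of $M$ is fixed.
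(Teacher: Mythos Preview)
Your proposal is correct and matches the paper's approach: the paper states only that the theorem ``is a direct application of Lemma~\ref{lem:A_R},'' and your argument is precisely the unpacking of that sentence --- checking objects via freeness of $H_1^{\varphi_\Z}(F_g,\star)$, and checking morphisms by feeding the commutative square of Lemma~\ref{lem:A_R} into the parallel defining formulas for $\Alex(M,\varphi)$ and $\Reid(M,\varphi)$ with a compatible choice of volume form.
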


\section{Reidemeister functor and knots}  \label{sec:R_knots}

We now  compute the functor  $\Reid$ on knot exteriors and we consider, next, the situation of closed $3$-manifolds.
 In this section, we fix a field $\F$ and a multiplicative subgroup $G$ of $\F$.
 The extension of  a group homomorphism $\varphi:H \to G$ to a ring homomorphism $\Z[H]\to \F$  is  still denoted by $\varphi$.

\subsection{The abelian Reidemeister torsion of a CW-pair} \label{subsec:torsion_CW-pair}

Let $(X,Y)$ be a pair of finite CW-complexes,
and let $\varphi: \Z[H_1(X)]\to \F$ be a ring homomorphism.
We consider the $\varphi$-twisted cell chain complex $C^\varphi(X,Y)$ of the pair $(X,Y)$, which is a finite $\F$-chain complex of length $p:=\dim X$.
For every $i\in\{0,\dots,p\}$, let $n_i\geq 0$ be the number of relative $i$-cells of $(X,Y)$
and order them $\sigma_1^{(i)},\dots,\sigma_{n_i}^{(i)}$ in an arbitrary way. 
For every  cell $\sigma$ of $(X,Y)$, we also choose an orientation of $\sigma$ 
and a lift $\hat \sigma$ of $\sigma$ to the maximal abelian cover  $\widehat X$ of $X$.
Thus, we obtain a basis $c:=(c_p,\dots,c_0)$  of the $\F$-chain complex  $C^{\varphi}(X,Y)$
where, for every  $i\in\{0,\dots,p\}$, the basis of $C^{\varphi}_i(X,Y)$ is 
$
c_i:= \big(1\otimes \hat\sigma_1^{(i)},\dots,1 \otimes \hat\sigma_{n_i}^{(i)}\big).
$
Recall that the \emph{Reidemeister torsion} of the pair $(X,Y)$ with coefficients $\varphi$ is the scalar
$$
\tau^\varphi(X,Y):= \left\{\begin{array}{ll}
0 & \hbox{if} \ H^\varphi(X,Y) \neq 0,\\
\tau\big(C^\varphi(X,Y);c\big) & \hbox{if} \ H^\varphi(X,Y) = 0,
\end{array}\right.
$$
where $\tau(C;c)$ denotes the torsion of a finite  acyclic $\F$-chain complex $C$ based by $c$: see \S \ref{subsec:torsion_def}.
The reader is referred to the monograph \cite{Turaev_book} for an introduction to this combinatorial invariant.
Without further structure on the  CW-pair $(X,Y)$, the scalar $\tau^\varphi(X,Y)$
 is only defined up to multiplication by an element of $\pm \varphi(H_1(X))$.
If $Y=\varnothing$, we denote it by $\tau^\varphi(X)$.

\subsection{The Reidemeister function in genus one}

We now consider a compact connected orientable $3$-manifold $M$ with connected boundary and a group homomorphism  $\varphi:H_1(M)\to G$.
Let $\star \in \partial M$ and set $H:= H_1^{\varphi}(M,\star)$.
The next lemma relates the Reidemeister function $\mathcal{R}_M^\varphi$  to the Reidemeister torsion $\tau^{\varphi}(M)$ in genus one.

\begin{lemma}\label{lem:def_one_for_Reid}
Assume that $g(M)=1$ and that $\varphi$ is not trivial. Then, for any $k\in H$, 
\begin{equation}\label{eq:R_tau}
 \mathcal{R}_M^\varphi(k) = \tau^{{\varphi}}(M) \cdot \partial_*(k).
\end{equation}
Here $\partial_*: H\to \F$ is the connecting homomorphism $ H_1^{\varphi}(M,\star) \to H_0^{\varphi}(\star)$
in the long exact sequence of the pair $(M,\star)$, 
followed by the canonical isomorphism $H_0^\varphi(\star)\simeq \F$.
\end{lemma}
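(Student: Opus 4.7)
The plan is to reduce the identity to the multiplicativity of Reidemeister torsion applied to the short exact sequence of $\varphi$-twisted cell chain complexes
$$0 \longrightarrow C^\varphi(\star) \longrightarrow C^\varphi(M) \longrightarrow C^\varphi(M,\star) \longrightarrow 0.$$
First I would dispose of the degenerate case $\dim H > 1$. By Lemma~\ref{lem:homology} this is equivalent to $H_2^\varphi(M,\star) \neq 0$; using the long exact sequence of the pair $(M,\star)$ in $\varphi$-twisted homology, together with the vanishing $H_0^\varphi(M) = 0$ (which follows from the non-triviality of $\varphi$, as in the proof of Lemma~\ref{lem:abs_rel}), this forces $H^\varphi(M)\neq 0$. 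Hence both $\mathcal{R}_M^\varphi = 0$ (by Definition~\ref{def:Reidemeister_function}) and $\tau^\varphi(M)=0$, and \eqref{eq:R_tau} holds trivially.

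In the remaining case $\dim H = 1$, the same long exact sequence shows that $H^\varphi(M)=0$ and that $\partial_*: H \to \F$ is an isomorphism. Since both sides of \eqref{eq:R_tau} are $\F$-linear in $k$, it suffices to verify the identity for a single $k\in H$ with $\partial_*(k) \neq 0$. Fix such a $k$, and choose compatible bases $c, c'', c'$ of $C^\varphi(M,\star)$, $C^\varphi(\star)$, $C^\varphi(M)$ coming from ordered, oriented lifts of cells of a smooth triangulation of $M$ in which $\star$ is the only relative $0$-cell; then the basis $c'$ simply concatenates $c$ with the single lift $1\otimes\widehat{\star}$ in degree zero. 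Multiplicativity (Theorem~\ref{th:multiplicativity}) then yields an identity of the form
$$\tau^\varphi(M) \;=\; \tau\bigl(C^\varphi(\star); c''; (1)\bigr) \cdot \tau\bigl(C^\varphi(M,\star); c; (k)\bigr) \cdot \tau(\mathcal{H})^{\epsilon}$$
in $\F / \!\pm\! G$, where $\mathcal{H}$ denotes the long exact sequence in homology and $\epsilon = \pm 1$. The first factor is $1$, the second is $\mathcal{R}_M^\varphi(k)$ by Definition~\ref{def:Reidemeister_function}, and since $H^\varphi(M)=0$ and $H_2^\varphi(M,\star)=0$ the sequence $\mathcal{H}$ collapses to the two-term acyclic complex $H \xrightarrow{\partial_*} \F$, whose torsion with bases $(k)$ and $(1)$ is $\partial_*(k)$.

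The main obstacle is bookkeeping of signs, of $\pm G$-ambiguities coming from reordering of lifts, and of the exponent $\epsilon$. To pin down $\epsilon = -1$ I would invoke the $\F$-linearity of $\mathcal{R}_M^\varphi$: rescaling $k$ by any $\lambda \in \F^\times$ multiplies both $\mathcal{R}_M^\varphi(k)$ and $\partial_*(k)$ by $\lambda$ while leaving $\tau^\varphi(M)$ unchanged, and only $\epsilon = -1$ is consistent with the three-term relation. Rearranging then gives $\mathcal{R}_M^\varphi(k) = \tau^\varphi(M) \cdot \partial_*(k)$ modulo $\pm G$, as required.
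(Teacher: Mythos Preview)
Your proof is correct and follows essentially the same route as the paper: the short exact sequence $0\to C^\varphi(\star)\to C^\varphi(M)\to C^\varphi(M,\star)\to 0$, multiplicativity of torsion, and the observation that under the hypotheses the long exact sequence $\mathcal H$ collapses to the two-term complex $H\xrightarrow{\partial_*}\F$. The only real difference is that the paper computes $\tau(\mathcal H)=\partial_*(k)^{-1}$ directly from the grading convention of $\mathcal H$ (it sits in degrees $2$ and $3$ of the length-$12$ complex), whereas you hedge with an undetermined exponent $\epsilon$ and then pin it down by the scaling argument $k\mapsto\lambda k$; both arrive at the same place. One small remark: the exponent $\epsilon$ you introduce on $\tau(\mathcal H)$ is not part of the statement of Theorem~\ref{th:multiplicativity} (which has only a sign $\varepsilon$ as a multiplicative factor), so your uncertainty is really about the value of $\tau(\mathcal H)$ itself rather than about the shape of the multiplicativity formula --- but your linearity argument resolves it either way.
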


\begin{proof}
Consider a cell decomposition of $M$ where $\star$ is a $0$-cell.
The  short exact sequence of $\F$-chain complexes 
\begin{equation}\label{eq:ses}
0 \longrightarrow \underbrace{C^{\varphi}(\star)}_{C':=} \longrightarrow 
\underbrace{C^{\varphi}(M)}_{C:=} \longrightarrow 
\underbrace{C^{\varphi}(M,\star)}_{C'':=} \longrightarrow 0
\end{equation}
induces the following long exact sequence in homology:
\begin{equation}\label{eq:les}
 0 \longrightarrow 0 \longrightarrow 0 \longrightarrow 0 \longrightarrow 
H_2^{\varphi}(M)  \longrightarrow  H_2^{\varphi}(M,\star)  \to
\end{equation}
$$
\to 0 \longrightarrow H_1^{\varphi}(M)  \longrightarrow  H_1^{\varphi}(M,\star)  \stackrel{\partial_*}{\longrightarrow}  
H_0^{\varphi}(\star) \longrightarrow   H_0^{\varphi}(M) \longrightarrow 0
$$
We regard (\ref{eq:les}) as an acyclic $\F$-chain complex $\mathcal{H}$ of length $12$:
let $(h',h,h'')$ be the basis of $\mathcal{H}$ obtained by choosing bases $h',h,h''$ of $H(C'),H(C),H(C'')$ in each degree.
We choose an orientation and a lift to $\widehat M$ for every cell  of $M$ and, for all  $i\in\{0,\dots,3\}$,
 we order the $i$-cells in an arbitrary way. Thus, we obtain bases $c',c,c''$ of the complexes $C',C,C''$, respectively, which are compatible in the sense of \S \ref{subsec:torsion_def}.
By the multiplicativity property of  torsions (see Theorem \ref{th:multiplicativity}),    we obtain
\begin{equation}\label{eq:multiplicativity}
\tau(C;c,h) = \varepsilon \cdot  \tau(C';c',h') \cdot \tau(C'';c'',h'') \cdot \tau\big(\mathcal{H};(h',h,h'')\big)
\end{equation}
where $\varepsilon$ is a sign  independent of $h,h',h''$.
If $H_2^{\varphi}(M)\neq 0$, then $\tau^{\varphi}(M)=0$ by definition,
but (\ref{eq:les})   gives   $H_2^{\varphi}(M,\star)\neq 0$
and Lemma \ref{lem:homology} implies that $\dim H_1^{\varphi}(M,\star)>g(M)$:
hence $\mathcal{R}_M^\varphi=0$ by definition and (\ref{eq:R_tau}) trivially holds true.
Therefore we can assume that $H_2^{\varphi}(M)= 0$.

Besides  $H_0^{\varphi}(M)= 0$ since $\varphi$ is non-trivial:
the fact that $\chi(M)=1-g(M)$ is zero implies that $H_1^{\varphi}(M)= 0$ as well.
Thus the chain complex $\mathcal{H}$ defined by (\ref{eq:les}) is concentrated in degrees $2$ and $3$. 
Let $k\in H \setminus \{0\}$ which defines a basis $h''$ of $H(C'')$,
and let $h'$ be the basis of $H(C')$ defined by the canonical generator of $H_0^{\varphi}(\star)$.
Then we obtain
$$
\tau\big(\mathcal{H};(h',h,h'')\big) = \left[\partial_*(h''_1)/ h'_0\right]^{(-1)^{2+1}} = \partial_*(k)^{-1}.
$$
Besides we have $\tau(C';c',h')=1$ by our choices of $c'$ and $h'$. 
We conclude thanks to (\ref{eq:multiplicativity}) that
$
\tau^{\varphi} (M) = \varepsilon \cdot  \mathcal{R}_M^\varphi(k) \cdot \partial_*(k)^{-1}.
$
\end{proof}

\begin{remark} \label{rem:genus_0}
If $g(M)=0$ and $\varphi$ is not trivial, then $\mathcal{R}^\varphi_M: \F = \Lambda^0 H \to \F$ is the zero map.
Indeed, pick an oriented loop $\alpha$ in $M$ based at $\star$ such that $\varphi([\alpha])\neq 1$;
then $\partial_*: H \to \F$ does not vanish on $[\hat \alpha]$ and it follows that $\dim H > g(M)$.
\end{remark}

\subsection{The functor $\Reid$ on knot exteriors}

Let $K$ be an oriented knot in a closed connected oriented  $3$-manifold $N$,
and denote by $M_K$ the complement of an open tubular neighborhood of $K$ in $N$.
We assume given a group homomorphism $\varphi_K:M_K \to G$ and an oriented closed curve $\lambda \subset \partial M_K$ such that $\varphi_K([\lambda])\neq 1$. 
Thus the Reidemeister torsion $\tau^{\varphi_K}(M_K) \in \F/\pm G$ is defined.

We make $M_K$  a morphism $1\to 0$ in the category $\Cob$ by choosing a boundary-parametrization  $ m: F(1,0) \to \partial M_K$,
such that $\lambda_- := m^{-1}(\lambda)$  is contained in the bottom surface $F_1$ and goes through the base point $\star$. Set  $H_- := H_1^{\varphi_K m_- }(F_1,\star)$.
The following proposition, which is easily deduced from Lemma \ref{lem:def_one_for_Reid}, shows that 
the topological invariants $\tau^{\varphi_K}(M_K)$ and $\Reid(M_K,\varphi_K)$ are equivalent.

\begin{proposition} \label{prop:Reidemeister_knot}
With the above notation and for any $h\in \Lambda^i H_-$, we have
$$
\Reid(M_K,\varphi_K)(h) = \left\{\begin{array}{ll} \tau^{\varphi_K}(M_K) \cdot \partial_*(h) & \hbox{if } i=1,\\
0 & \hbox{otherwise},
\end{array}\right.
$$
where $\partial_*:H_-\to \F$ is the connecting homomorphism for the pair $(F_1,\star)$.
In particular,  we have $\tau^{\varphi_K}(M_K) = \Reid(M_K,\varphi_K)\big(\big[\, \widehat \lambda_-\big]\big)/(\varphi_K([\lambda])-1)$.
\end{proposition}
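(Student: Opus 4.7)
The plan is to deduce the proposition directly from Lemma \ref{lem:def_one_for_Reid} by unwinding the definition of $\Reid$ in the special case where the source has genus $1$ and the target has genus $0$.

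First, I would set up the degree bookkeeping. For the cobordism $M_K : (1, \varphi_K m_-) \to (0,0)$ we have $g_-=1$, $g_+=0$, hence $\delta\!g = -1$ and $g = g_-+g_+ = 1$. Also $H_+ = H_1^{0}(F_0,\star) = 0$, since $F_0$ is a disk. Therefore $\Lambda H_+ = \F$ concentrated in degree~$0$. Consequently, for $h \in \Lambda^i H_-$ with $i \neq 1$, the target $\Lambda^{i+\delta\!g} H_+ = \Lambda^{i-1} H_+$ is zero (either because $i-1 < 0$ when $i=0$, or because $H_+=0$ when $i\geq 2$), and hence $\Reid(M_K,\varphi_K)(h)=0$. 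This settles the case $i\neq 1$.

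For $i=1$, I take the canonical (integral) volume form $\vol:\Lambda^0 H_+ = \F \to \F$ to be the identity. The defining property of $\Reid$ with $j=1$ and $y=1\in\Lambda^{g-j}H_+ = \F$ then reads
$$
\Reid(M_K,\varphi_K)(h) \;=\; \mathcal{R}_{M_K}^{\varphi_K}\!\bigl(m_-(h)\bigr) \qquad \text{for } h\in H_-.
$$
Now the topological input: $\partial M_K$ is a torus, so $g(M_K) = 1 - \chi(M_K) = 1$, and $\varphi_K$ is non-trivial because $\varphi_K([\lambda])\neq 1$. The hypotheses of Lemma \ref{lem:def_one_for_Reid} are therefore satisfied, giving
$$
\mathcal{R}_{M_K}^{\varphi_K}(k) \;=\; \tau^{\varphi_K}(M_K)\cdot \partial_*^{M_K}(k)
$$
for every $k \in H_1^{\varphi_K}(M_K,\star)$, where $\partial_*^{M_K}$ is the connecting homomorphism of the pair $(M_K,\star)$. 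Applying this with $k = m_-(h)$ and using the naturality of the connecting homomorphism with respect to the map of pairs $m_-:(F_1,\star)\to(M_K,\star)$ (which commutes with the canonical identifications $H_0^{\varphi_K m_-}(\star)\simeq \F \simeq H_0^{\varphi_K}(\star)$) yields $\partial_*^{M_K}(m_-(h))=\partial_*(h)$ and hence the displayed formula for $\Reid(M_K,\varphi_K)(h)$.

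Finally, for the ``in particular'' assertion I would apply the formula to $h=[\widehat{\lambda}_-]\in H_-$. By the standard computation of the connecting homomorphism on the lift of a based loop one has $\partial_*([\widehat{\lambda}_-]) = \varphi_K([\lambda])-1$, which is non-zero by assumption, and solving for $\tau^{\varphi_K}(M_K)$ gives the claimed identity. There is no real obstacle in this proof; the only subtle point is the identification of the two connecting homomorphisms via naturality, and the choice of a canonical volume form on the trivial space $H_+=0$, both of which are routine.
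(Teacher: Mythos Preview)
Your proof is correct and follows exactly the route the paper indicates: the paper simply states that the proposition ``is easily deduced from Lemma~\ref{lem:def_one_for_Reid}'' and gives no further details, and your argument is precisely that deduction (degree bookkeeping to kill $i\neq 1$, the definition of $\Reid$ with the canonical volume form on $\Lambda^0 H_+=\F$ for $i=1$, then Lemma~\ref{lem:def_one_for_Reid} combined with naturality of $\partial_*$).
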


\begin{example}
If $G$ is the infinite cyclic group generated by $t$, $N$ is a homology $3$-sphere and  $\varphi_K$ maps the oriented meridian $\mu$ of $K$ to $t$,
then we know from \cite{Milnor_duality} that $\tau^{\varphi_K}(M_K)=\Delta(K)/(t-1)$. 
Thus we recover Proposition \ref{prop:Alexander_knot} by taking $\lambda:=\mu$.
\end{example}

\subsection{The situation of closed $3$-manifolds } \label{subsec:Rf_vs_Rt}

Let $N$ be a closed connected orientable $3$-manifold,  and let $\varphi:H_1(N) \to G$ be a non-trivial group homomorphism.
We  wish to compute the Reidemeister torsion $\tau^{\varphi}(N)$ with coefficients $\varphi:\Z[H_1(N)] \to \F$ 
 from the Reidemeister functor $\Reid$. 
 For this, we have to transform $N$ into a cobordism. 
Note that removing an open $3$-ball $B$ from $N$ and regarding $N \setminus B$ as an element of $\Cob(0,0)$ is not fruitful,
since the functor $\Reid$ maps this morphism to zero (see Remark \ref{rem:genus_0}).

We proceed in the following (rather indirect) way. 
Choose a knot $K \subset N$ such that $\varphi([K])\neq 1$.
Consider the complement $M_K$ of an open tubular neighborhood of $K$ in $N$,
and fix a parallel $\rho \subset \partial M_K$ of $K$.
Let  $\varphi_K:H_1(M_K) \to G$ be the homomorphism obtained  from $\varphi$ by restriction to $M_K \subset N$.
Make $M_K$ a morphism $1 \to 0$ in  $\Cob$ by choosing a boundary-parametrization $m: F(1,0) \to \partial M_K$
such that $\rho_- := m^{-1}({\rho})$ is contained in the bottom surface $F_1$ and $\star \in \rho_-$.

\begin{proposition}\label{prop:Rf_vs_Rt}
With the above notation, we have
$$
\tau^{\varphi}(N)= \frac{\Reid(M_K,\varphi_K)([\widehat \rho_-])}{(\varphi([K])-1)^2} \ \in \F/\pm G.
$$
\end{proposition}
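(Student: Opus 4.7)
My plan is to combine Proposition \ref{prop:Reidemeister_knot} with a surgery formula relating the Reidemeister torsion of the closed manifold $N$ to that of its knot exterior $M_K$. Since $\rho$ is a parallel of $K$ on $\partial M_K$, the inclusions $\rho \subset \partial M_K \subset M_K \subset N$ send $[\rho]$ to $[K]$ in $H_1(N)$, so that $\varphi_K([\rho]) = \varphi([K]) \neq 1$. Proposition \ref{prop:Reidemeister_knot} then applies to $\lambda := \rho$ and gives
$$\Reid(M_K,\varphi_K)([\widehat \rho_-]) = \tau^{\varphi_K}(M_K) \cdot (\varphi([K]) - 1) \ \in \F/\pm G.$$
It therefore suffices to establish the surgery identity
$$\tau^{\varphi_K}(M_K) = \tau^{\varphi}(N) \cdot (\varphi([K]) - 1) \ \in \F/\pm G. \qquad (\ast)$$

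To prove $(\ast)$, write $N = M_K \cup V$ where $V$ is the closed tubular neighborhood of $K$ and $T = \partial M_K = \partial V$. I would choose a CW decomposition of $N$ for which $M_K$ is a subcomplex, obtained by adding one relative $2$-cell $D$ (a meridional disk of $V$) and one relative $3$-cell $e$. The resulting short exact sequence of twisted cell chain complexes
$$0 \longrightarrow C^{\varphi_K}(M_K) \longrightarrow C^\varphi(N) \longrightarrow C^\varphi(N,M_K) \longrightarrow 0,$$
combined with the multiplicativity of the torsion (Theorem \ref{th:multiplicativity}), yields
$$\tau(C^\varphi(N); c) = \pm\, \tau(C^{\varphi_K}(M_K); c') \cdot \tau(C^\varphi(N,M_K); c'') \cdot \tau(\mathcal{H}),$$
where $\mathcal{H}$ is the long exact sequence in twisted homology of the pair. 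Using the product CW-structure of $(V,T)$ together with excision, the relative complex $C^\varphi(N,M_K)$ identifies with $C^{\varphi|_V}(V,T)$, which reduces to the two-term complex $\F \stackrel{\varphi([K])-1}{\longrightarrow} \F$ concentrated in degrees $3$ and $2$ (the meridian $\mu$ bounds $D$ and is therefore $\varphi$-trivial, while the deck translation along the longitude $\ell$ acts by $\varphi([K])$). The hypothesis $\varphi([K]) \neq 1$ makes this complex acyclic, with torsion $(\varphi([K])-1)^{-1}$ in $\F/\pm G$.

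Since $C^\varphi(N,M_K)$ is acyclic, the long exact sequence $\mathcal{H}$ degenerates into isomorphisms $H^{\varphi_K}(M_K) \cong H^\varphi(N)$ in every degree. If both groups vanish, then $\mathcal{H}$ is the zero complex so $\tau(\mathcal{H})=1$, and the multiplicativity identity specializes precisely to $(\ast)$; if both groups are non-zero, then $\tau^{\varphi_K}(M_K)$ and $\tau^\varphi(N)$ vanish by convention, and $(\ast)$ holds trivially. Combining $(\ast)$ with the opening observation gives
$$\Reid(M_K,\varphi_K)([\widehat \rho_-]) = \tau^{\varphi}(N) \cdot (\varphi([K]) - 1)^{2},$$
which is the statement of the proposition.

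The main technical point to check carefully will be the excision isomorphism $C^\varphi(N,M_K) \cong C^{\varphi|_V}(V,T)$: although the restriction of the maximal abelian cover of $N$ to $V$ is in general not the maximal abelian cover of $V$, after tensoring with $\F$ via $\varphi$ the resulting twisted complex depends only on the composition $H_1(V) \to H_1(N) \to G$, so the identification is valid at the level of $\F$-chain complexes. The tracking of signs and units in $\pm G$, coming both from multiplicativity and from the torsion convention of the appendix, is routine and produces exactly the two factors of $\varphi([K])-1$ in the statement (one from Proposition \ref{prop:Reidemeister_knot} and one from $(\ast)$).
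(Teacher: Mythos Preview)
Your proof is correct and follows the same two-step strategy as the paper: apply Proposition~\ref{prop:Reidemeister_knot} with $\lambda:=\rho$, then use the gluing identity $\tau^{\varphi_K}(M_K)=(\varphi([K])-1)\cdot\tau^{\varphi}(N)$. The only difference is that the paper simply cites this gluing formula from \cite[\S VII.1]{Turaev_book_dim_3}, whereas you supply a direct argument via the short exact sequence $0\to C^{\varphi_K}(M_K)\to C^{\varphi}(N)\to C^{\varphi}(N,M_K)\to 0$ and multiplicativity of the torsion. Your computation of the relative torsion $\tau^{\varphi}(N,M_K)=(\varphi([K])-1)^{-1}$ from the two-term complex in degrees $2$ and $3$ is correct, and your remark on excision (that the twisted relative complex depends only on the composite $H_1(V)\to H_1(N)\to G$) is exactly the point one needs. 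So your version is a more self-contained proof of the same result; the paper's version is shorter because it outsources the surgery formula.
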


\begin{proof}
There is a formula describing (under certain circumstances) how the abelian Reidemeister torsion changes when  a solid torus is glued along a $3$-manifold with toroidal boundary: see \cite[\S VII.1]{Turaev_book_dim_3}.
This formula applies to our situation and gives
$$
\tau^{\varphi_K}(M_K)= (\varphi([K])-1) \cdot \tau^{\varphi}(N).
$$
We conclude by applying Proposition \ref{prop:Reidemeister_knot} to $\lambda:= \rho$.
\end{proof}

As an application, we relate the functor $\Alex$ to the Alexander polynomial of closed $3$-manifolds.
Thus, we now assume that $G$ is a finitely generated free abelian group and we take $\F:=Q(G)$.
We consider the \emph{Alexander polynomial} of $N$ with coefficients $\varphi$, namely
$$
\Delta^\varphi(N) = \Delta_0\, H_1^{\varphi_\Z}(N) \ \in \Z[G]/\!\pm G
$$
where $\varphi_\Z: \Z[H_1(N)] \to \Z[G]$ is the extension of $\varphi:H_1(N) \to G$.

\begin{proposition}\label{prop:Alexander_closed}
With the above notation, we have
$$
\Delta^\varphi(N) =
\left\{\begin{array}{ll}
{\displaystyle \frac{\Alex(M_K,\varphi_K)([\widehat \rho_-])}{(\varphi([K])-1)^2}} & \hbox{if} \ \rank \varphi(H_1(N)) \geq 2, \\
{\displaystyle \frac{\Alex(M_K,\varphi_K)([\widehat \rho_-])}{(t^{n-1}+\cdots + t +1)^2}} & \hbox{if} \ \rank \varphi(H_1(N)) = 1.
\end{array}\right.
$$
In the second case, $t\in \varphi(H_1(N))$ is a generator and $n\in \N$ is such that $\varphi([K])=t^n$.
\end{proposition}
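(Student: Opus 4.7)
The plan is to combine Theorem~\ref{th:R_vs_A} with Proposition~\ref{prop:Rf_vs_Rt} in order to express $\tau^\varphi(N)$ through $\Alex$, and then to invoke the classical Milnor--Turaev formula relating the abelian Reidemeister torsion of a closed $3$-manifold to its Alexander polynomial.

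Applying Theorem~\ref{th:R_vs_A} with $\F := Q(G)$ shows that the map $\Reid(M_K,\varphi_K)$ is the extension of scalars of $\Alex(M_K,\varphi_K)$ along the inclusion $\Z[G] \hookrightarrow Q(G)$. Thus, viewing $\Alex(M_K,\varphi_K)([\widehat \rho_-]) \in \Z[G]$ as an element of $Q(G)/\!\pm G$, we have
$$
\Reid(M_K,\varphi_K)([\widehat \rho_-]) = \Alex(M_K,\varphi_K)([\widehat \rho_-]),
$$
and substituting this equality into the formula of Proposition~\ref{prop:Rf_vs_Rt} yields
$$
\tau^\varphi(N) = \frac{\Alex(M_K,\varphi_K)([\widehat \rho_-])}{(\varphi([K])-1)^2} \ \in \ Q(G)/\!\pm G.
$$

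Next I would appeal to the Milnor--Turaev identification of $\tau^\varphi(N)$ with $\Delta^\varphi(N)$ (see Turaev's monograph \cite{Turaev_book}): for a non-trivial homomorphism $\varphi: H_1(N) \to G$ into a finitely generated free abelian group,
$$
\tau^\varphi(N) =
\begin{cases}
\Delta^\varphi(N) & \text{if } \rank \varphi(H_1(N)) \geq 2, \\
\Delta^\varphi(N)/(t-1)^2 & \text{if } \rank \varphi(H_1(N)) = 1,
\end{cases}
$$
where in the second case $t$ denotes a generator of $\varphi(H_1(N))$. The rank $\geq 2$ case of the proposition follows immediately. For the rank $1$ case, writing $\varphi([K]) = t^n$ and using the factorization $\varphi([K]) - 1 = (t-1)(t^{n-1} + \cdots + t + 1)$, we get
$$
\Delta^\varphi(N) = (t-1)^2 \, \tau^\varphi(N) = \frac{(t-1)^2 \, \Alex(M_K,\varphi_K)([\widehat \rho_-])}{(\varphi([K])-1)^2} = \frac{\Alex(M_K,\varphi_K)([\widehat \rho_-])}{(t^{n-1}+\cdots+t+1)^2}.
$$

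The only ingredient beyond what has already been established in the paper is the Milnor--Turaev formula above, which is classical; hence no significant obstacle is expected. One could alternatively proceed directly, applying Lemma~\ref{lem:def_one} to compute $\Alex(M_K,\varphi_K)([\widehat \rho_-])$ in terms of $\Delta^{\varphi_K}(M_K)$ and then comparing $\Delta^{\varphi_K}(M_K)$ with $\Delta^\varphi(N)$ via the Mayer--Vietoris sequence for the decomposition $N = M_K \cup (D^2 \times S^1)$, but this more hands-on route is essentially a repackaging of the same classical identification.
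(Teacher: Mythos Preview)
Your proof is correct and follows essentially the same route as the paper: combine Proposition~\ref{prop:Rf_vs_Rt} with Theorem~\ref{th:R_vs_A} to express $\tau^\varphi(N)$ via $\Alex$, then invoke Turaev's formula relating $\tau^\varphi(N)$ to $\Delta^\varphi(N)$ (the paper cites \cite{Turaev_Alexander} rather than \cite{Turaev_book}). Your explicit factorization $\varphi([K])-1=(t-1)(t^{n-1}+\cdots+1)$ in the rank~$1$ case is exactly the step the paper leaves implicit when it writes ``We conclude by combining \eqref{eq:presque} to \eqref{eq:Turaev_Alexander}.''
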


\begin{proof} Proposition \ref{prop:Rf_vs_Rt} and Theorem \ref{th:R_vs_A} give 
\begin{equation}\label{eq:presque}
\tau^{\varphi}(N)= \frac{\Reid(M_K,\varphi_K)([\widehat \rho_-])}{(\varphi([K])-1)^2}
=  \frac{\Alex(M_K,\varphi_K)([\widehat \rho_-])}{(\varphi([K])-1)^2} \ \in Q(G)/\pm G.
\end{equation}
Besides, according to  \cite{Turaev_Alexander}, we have
\begin{equation}\label{eq:Turaev_Alexander}
\tau^{\varphi}(N) = 
\left\{\begin{array}{ll}
{\displaystyle \Delta^\varphi(N)} & \hbox{if} \ \rank \varphi(H_1(N)) \geq 2, \\
{\displaystyle {\Delta^\varphi(N)}/{(t-1)^2}} & \hbox{if} \ \rank \varphi(H_1(N)) = 1.
\end{array}\right.
\end{equation}
We conclude by combining \eqref{eq:presque} to \eqref{eq:Turaev_Alexander}.
\end{proof}

\section{The monoid of homology cobordisms} \label{sec:homology_cobordisms}

In this section, we fix an integer $k\geq 1$,  an abelian group $G$ and a group homomorphism $\psi: H_1(F_k) \to G$.
We shall compute the  functors $\Alex$ and $\Reid$ on the monoid of homology cobordisms over the surface $F_k$.

\subsection{Homology cobordisms} \label{subsec:homology_cobordism}

A \emph{homology cobordism} over $F_k$ is a morphism $M:k\to k$ in the category $\Cob$ such that
$m_\pm:H_1(F_k) \to H_1(M)$ is an isomorphism. 
The set of   equivalence classes   of homology cobordisms defines a submonoid
$$
\C(F_k) \subset \Cob(k,k).
$$
We restrict ourselves to homology cobordisms $M$ such that the composition
$$
\xymatrix{
H_1(F_k) \ar[r]^{m_{-}}_-\simeq & H_1(M) \ar[r]^{m_{+}^{-1}}_-\simeq & H_1(F_k) \ar[r]^-\psi &  G
}
$$
coincides with $\psi$. Thus we obtain a submonoid 
$$
\C^\psi(F_k) \subset \C(F_k),
$$
which we also view as a submonoid of $\Cob_G\big((k,\psi),(k,\psi)\big)$ 
by equipping every cobordism $M$ of the above form  with the homomorphism $\psi:=\psi  \circ m_{-}^{-1}=\psi  \circ m_{+}^{-1}:H_1(M) \to G$.

\begin{example}
A \emph{homology cylinder} is a homology cobordism $M$ over $F_k$ such that $m_-=m_+:H_1(F_k) \to H_1(M)$.
Homology cylinders constitute a submonoid $\mathcal{IC}(F_k)$ of $\C(F_k)$ such that
$\mathcal{IC}(F_k)  \subset \C^\psi(F_k)$, whatever $\psi$ is.
\end{example}  

\subsection{The Magnus representation} \label{subsec:Magnus}

Assume now that  $G$ is a multiplicative subgroup of a field $\F$.
The extension of $\psi:H_1(F_k) \to G$ to a ring homomorphism $\Z[H_1(F_k)] \to \F$ is still denoted by $\psi$.
We set $$H^\psi  := H_1^{\psi}(F_k,\star)$$
and,  when we are given an $M \in \mathcal{C}^\psi(F_k)$,  we  denote $H := H_1^{\psi  }(M,I)$.
The fact that the map $m_{\pm}: H_1(F_k) \to H_1(M)$ is an isomorphism of abelian groups implies that   $m_{\pm} :  H^\psi \to  H$
is an isomorphism of $\F$-vector spaces. (See \cite[Proposition 2.1]{KLW} for  a similar statement.) Consequently, we are allowed to set
$
r^{\psi}(M) := m_+^{-1} \circ m_- : H^\psi \to H^\psi.
$
This results in a monoid homomorphism 
$$
r^{\psi}: \C^\psi(F_k) \longrightarrow \Aut\!\big( H^\psi \big),
$$
which is called the  \emph{Magnus representation}.  See  \cite{Sakasai_survey} for a survey of this invariant.

\subsection{The restriction of $\Reid$ to homology cobordisms} \label{subsec:R_C}

The Reidemeister functor restricts to a monoid homomorphism
$$
\Reid: \C^\psi(F_k) \longrightarrow \PVect_{\F,\pm G}\big(\Lambda H^\psi,\Lambda H^\psi\big).
$$
We now compute this projective representation of the monoid $\C^\psi(F_k)$. 

\begin{proposition} \label{prop:homology_cobordisms_Reidemeister}
For any $M \in \C^\psi(F_k)$ with top surface $\partial_+ M$,  we have 
$$
\Reid(M,\psi) = \tau^{\psi}(M,\partial_+M) \cdot \Lambda\big( r^{\psi} (M)\big) : \Lambda H^\psi \longrightarrow \Lambda H^\psi
$$
where  $\tau^{\psi} (M, \partial_+M) $ is  the   Reidemeister torsion of $(M, \partial_+M)$ as defined in \S \ref{subsec:torsion_CW-pair}.
\end{proposition}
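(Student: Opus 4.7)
The plan is to reduce the claim to computing a single scalar $T := \mathcal{R}_M^\psi\!\big(m_+(v_1)\wedge\cdots\wedge m_+(v_{2k})\big)$, where $v=(v_1,\dots,v_{2k})$ is a basis of $H^\psi$ with $\vol(v_1\wedge\cdots\wedge v_{2k})=1$, and then to identify $T$ with $\tau^\psi(M,\partial_+M)$ via the multiplicativity of Reidemeister torsions.

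For the reduction, I would use that $M \in \C^\psi(F_k)$ implies $m_+\colon H^\psi \to H := H_1^\psi(M,I)$ is an isomorphism, so $\dim H = 2k = g(M)$, which by Lemma~4.1 gives $H_2^\psi(M,I) = 0$ and $\mathcal{R}_M^\psi \neq 0$. Writing $r := r^\psi(M) = m_+^{-1}m_-$, the defining equation of $\Reid(M,\psi)$ applied to $x \in \Lambda^j H^\psi$ and tested against $y \in \Lambda^{2k-j} H^\psi$ rewrites, using $m_- = m_+ \circ r$, as
$$\vol\!\big(\Reid(M,\psi)(x) \wedge y\big) = \mathcal{R}_M^\psi\!\big(\Lambda^{2k} m_+(\Lambda^j r(x) \wedge y)\big).$$
Since $\Lambda^{2k} H$ is one-dimensional, the linear form $u \mapsto \mathcal{R}_M^\psi(\Lambda^{2k} m_+(u))$ on $\Lambda^{2k} H^\psi$ equals $T \cdot \vol$, so the right-hand side above equals $T \cdot \vol(\Lambda^j r(x) \wedge y)$. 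Non-degeneracy of the pairing $\Lambda^j H^\psi \otimes \Lambda^{2k-j} H^\psi \to \F$ induced by $\vol$ then forces $\Reid(M,\psi)(x) = T \cdot \Lambda^j r(x)$ for all $x$. It remains to prove $T \equiv \tau^\psi(M,\partial_+M)$ modulo $\pm G$.

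To evaluate $T$, I would apply the multiplicativity of torsions from the appendix to the short exact sequence of $\F$-chain complexes
$$0 \to C^\psi(\partial_+M, \star_+) \to C^\psi(M, I) \to C^\psi(M, \partial_+M \cup I) \to 0$$
associated to the triple $(M, \partial_+M \cup I, I)$, after choosing a cell decomposition of $M$ in which $\partial_+M$, $I$, and $\star_\pm$ are subcomplexes. Under the identification $(\partial_+M, \star_+) \cong (F_k, \star)$ given by $m_+$, the first complex has the simple-homotopy type of a wedge of $2k$ circles, so its torsion computed on the basis $v$ lies in $\pm G$. The induced long exact sequence $\mathcal{H}$ in homology collapses, using $H_2^\psi(M,I) = 0$ and the isomorphism $m_+$, to the single differential $m_+\colon H^\psi \to H$; hence $C^\psi(M, \partial_+M \cup I)$ is acyclic and, because $\partial_+M \cup I$ deformation retracts onto $\partial_+M$ (by pulling the free endpoint of $I$ along $I$ to $\star_+$), its torsion equals $\tau^\psi(M, \partial_+M)$ modulo $\pm G$. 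With the bases $v$ and $m_+(v)$ chosen on either side of $m_+$ in $\mathcal{H}$, one has $\tau(\mathcal{H}) = \pm 1$, and multiplicativity then yields $T \equiv \tau^\psi(M, \partial_+M)$ modulo $\pm G$, as required.

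The main obstacle I foresee is the careful bookkeeping of the $\pm G$-ambiguities through the multiplicativity identity, and in particular verifying that $\tau(\mathcal{H})$ reduces to a sign with our choices; this rests on the observation that the only non-trivial differential of $\mathcal{H}$ is the identity in the matched bases $v$ and $m_+(v)$, together with the compatibility of the cellular lifts chosen on the three complexes in the short exact sequence.
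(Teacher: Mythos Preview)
Your proof is correct and follows essentially the same approach as the paper: both arguments compute $\mathcal{R}_M^\psi$ on a basis coming from $m_+$ by applying multiplicativity of torsions to the short exact sequence associated with the triple $(M,\partial_+M,\hbox{base})$, using that $(F_k,\star)$ has the simple homotopy type of a wedge of circles and that the long exact sequence collapses to the isomorphism $m_+$. The only cosmetic differences are organizational---you first reduce to the scalar $T$ and then evaluate it, whereas the paper first derives the general formula $\mathcal{R}_M^\psi(h)=\tau^\psi(M,\partial_+M)\cdot[h/m_+(h')]$ and then specializes---and you work relative to $I$ and $\partial_+M\cup I$ rather than $\star$ and $\partial_+M$, which as you note differ by an elementary collapse.
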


\begin{proof}
We shall prove a slightly more general statement: let $\psi_\pm : H_1(F_k) \to G$ be any group homomorphisms
and assume that $M \in \C(F_k)$ is a cobordism such that $\psi_- \circ m_-^{-1}= \psi_+ \circ m_+^{-1}: H_1(M) \to G$. Then we claim that
\begin{equation}\label{eq:more_general}
\Reid(M,\psi) = \tau^{\psi}(M,\partial_+M) \cdot \Lambda (m_+^{-1} \circ m_-) : \Lambda H_- \longrightarrow \Lambda H_+
\end{equation}
where $H_\pm := H_1^{\psi_\pm} (F_k,\star)$ and $\psi:= \psi_\pm \circ m_\pm^{-1}$.
(The proposition  is the particular case where $\psi_+ = \psi_-: H_1(F_k) \to G$.)

To prove this claim, we set $g:=g(M)=2k$, $H := H_1^{\psi }(M,I)$ and let $h=(h_1,\dots,h_g)$ be a basis of $H$. 
In order to compute  $\mathcal{R}_M^\psi(h_1\wedge \cdots \wedge h_g)$, we consider the short exact sequence of $\F$-chain complexes:
\begin{equation} \label{eq:ses_relative}
0 \longrightarrow {\underbrace{C^{\psi_+}(F_k,\star)}_{C':=} }  \stackrel{m_+}{\longrightarrow} {\underbrace{C^{\psi}(M,\star)}_{C:=}} 
\longrightarrow {\underbrace{C^{\psi}(M,\partial_+M)}_{C'':=} }\longrightarrow  0
\end{equation}
The complex $C''$ is acyclic while  $C'$ and $C$ have their homology concentrated in degree~$1$.
Therefore, the long exact sequence  in homology $\mathcal{H}$ induced by \eqref{eq:ses_relative} is concentrated in degrees $4$ and $5$ 
where it reduces to the map  $m_+: H_+ =H_1(C') \to H_1(C)=H$.

There exists a wedge of  circles $S_1 \vee \cdots \vee S_g$ based at $\star$ onto which  the surface $F_k$ retracts by elementary collapses.
Let $h'=(h'_1,\dots,h'_{g})$ be the basis of $H_+$ obtained by lifting each of the loops $S_1,\dots,S_g$ to the maximal abelian cover of $F_k$. 
Then we have
$$
\tau(C';c',h')  \in \pm G \subset  \F
$$
for any choice of ordered/oriented lifts of the relative cells of $(F_k,\star)$ inducing a basis $c'$ of $C'$. 
Besides, by the multiplicativity property of torsions (see Theorem \ref{th:multiplicativity}), we have
$$
\tau(C;c,h) = \varepsilon \cdot \tau(C';c',h') \cdot \tau(C'';c'') \cdot \tau\big(\mathcal{H};(h',h)\big) \ \in \F \setminus \{0\}
$$
for some appropriate choices  of ordered/oriented lifts of the relative cells,  which result in bases $c',c,c''$ of the chain complexes.
Here $\varepsilon$ is a sign not depending on $h$ and  $\mathcal{H}$ is regarded as an acyclic $\F$-chain complex based by $(h',h)$. We deduce that
\begin{eqnarray*}
\mathcal{R}_M^{\psi}(h_1\wedge \cdots \wedge h_g) \ = \  \tau(C;c,h) 
&  =&  \tau^{\psi}(M,\partial_+ M) \cdot  [m_+(h')/h]^{(-1)^{4+1}} \\
& = & \tau^{\psi}(M,\partial_+ M) \cdot  [h/m_+(h')].
\end{eqnarray*}
(Here the  identities are up to multiplication by an element of $\pm G$ not depending on $h$.)

To proceed, we consider any integer $j\geq 0$ and any $x\in \Lambda^j H_-$. 
Let $\vol: \Lambda^{g} H_+ \to \F$ be the volume form defined by $\vol(h'_1 \wedge \cdots \wedge h'_g)=1$. (Note that $\vol$ is integral.)
Then, for any $y\in \Lambda^{g-j} H_+$, we have
\begin{eqnarray*}
\vol \big(\Reid(M,\psi)(x)\wedge y\big) 
&=& \mathcal{R}_M^{\psi}\big( \Lambda^j m_-(x) \wedge \Lambda^{g-j} m_+(y)\big) \\
& = & \tau^{\psi}(M,\partial_+ M) \cdot  \left[  \left. \big(\Lambda^j m_-(x) \wedge \Lambda^{g-j}m_+(y)\big) \right/m_+(h')\right] \\
& = & \tau^{\psi}(M,\partial_+ M) \cdot  \left[  \left. \big(\Lambda^j (m_+^{-1} m_-)(x) \wedge y \big) \right/ h' \right] \\
& = & \tau^{\psi}(M,\partial_+ M) \cdot  \vol\left( \Lambda^j (m_+^{-1} m_-) (x) \wedge y \right).
\end{eqnarray*}
We conclude that  $\Reid(M,\psi)(x) = \tau^{\psi}(M,\partial_+ M) \cdot \Lambda^j (m_+^{-1} m_-) (x)$
 up to multiplication by an element of $\pm G$ not depending on $x$, which proves \eqref{eq:more_general}.
\end{proof}

\subsection{The restriction of $\Alex$ to  homology cobordisms}

Assume now that the abelian group $G$ is  finitely generated and free, and  assume that $\F:=Q(G)$.
We denote by $\psi_\Z:\Z[H_1(F_k)] \to \Z[G]$ the extension of $\psi:H_1(F_k) \to G$ to a ring homomorphism
and we set $H^\psi_\Z := H_1^{\psi_\Z}(F_k,\star)$.
The Alexander functor restricts  to a monoid homomorphism
$$
\Alex: \C^\psi(F_k) \longrightarrow \PMod_{\Z[G],\pm G}\big(\Lambda H^\psi_\Z,\Lambda H^\psi_\Z\big).
$$
This projective representation of the monoid $\C^\psi(F_k)$ is computed as follows.

\begin{proposition} \label{prop:homology_cobordisms_Alex}
For any $M \in \C^\psi(F_k)$, 
we have the commutative diagram
$$
\xymatrix@!C{
\Lambda H^\psi_\Z \ar[rrr]^-{\Alex(M,\psi) } \ar@{^{(}->}[d] && & \Lambda H^\psi_\Z \ar@{^{(}->}[d]  \\
\Lambda H^\psi  \ar[rrr]_{\Delta^\psi(M,\partial_+M) \cdot \Lambda  r^{\psi} (M)} &&& \Lambda H^\psi
}
$$
where  $\Delta\!^\psi(M, \partial_+M) $ is  the Alexander polynomial of the pair $(M,\! \partial_+M)$ as defined in \S \ref{subsec:topological_pair}.
\end{proposition}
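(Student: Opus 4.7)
The plan is to reduce Proposition~\ref{prop:homology_cobordisms_Alex} to its Reidemeister counterpart (Proposition~\ref{prop:homology_cobordisms_Reidemeister}) by passing to the field of fractions $\F:=Q(G)$. On the one hand, Theorem~\ref{th:R_vs_A} expresses the Reidemeister functor over $Q(G)$ as the extension of scalars of the Alexander functor along $\Z[G]\hookrightarrow Q(G)$; applied to $(M,\psi)$, this gives
$$
\Reid(M,\psi) \, =\, \Id_{Q(G)} \otimes_{\Z[G]} \Alex(M,\psi) \quad \hbox{on} \quad \Lambda H^\psi \,=\, Q(G)\otimes_{\Z[G]} \Lambda H^\psi_\Z .
$$
On the other hand, Proposition~\ref{prop:homology_cobordisms_Reidemeister} specialized to $\F:=Q(G)$ yields
$$
\Reid(M,\psi)\, =\, \tau^{\psi}(M,\partial_+M)\cdot \Lambda\big(r^{\psi}(M)\big).
$$
The commutative diagram of the proposition then follows as soon as one checks the scalar identity
\begin{equation}\label{eq:my_ident}
\tau^{\psi}(M,\partial_+M)\, =\, \Delta^{\psi}(M,\partial_+M)\quad \hbox{in } Q(G)/\!\pm G,
\end{equation}
so the whole proof boils down to~\eqref{eq:my_ident}.

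To prove \eqref{eq:my_ident}, I would invoke the classical Milnor--Turaev formula expressing the abelian Reidemeister torsion of a CW-pair $(X,Y)$ whose $\psi$-twisted $Q(G)$-complex is acyclic as an alternating product of orders of twisted $\Z[G]$-homology modules:
$$
\tau^{\psi}(X,Y)\, =\, \prod_{i\geq 0}\, \big[\Delta_0 H_i^{\psi_\Z}(X,Y)\big]^{(-1)^{i+1}}\quad \hbox{in } Q(G)/\!\pm G.
$$
Applied to $(X,Y):=(M,\partial_+M)$, it remains to verify that the factors with $i\neq 1$ are units. The degree-zero contribution is trivial since $\partial_+M$ is connected. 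The contributions in degrees $i\in\{2,3\}$ should vanish because $M$ is a homology cobordism over $F_k$: using the long exact sequence of the pair in $\psi_\Z$-twisted homology, together with the fact that $m_+:H_1(F_k)\to H_1(M)$ is an isomorphism, and combining this with Poincaré--Lefschetz duality for the $3$-manifold $M$, one reduces the problem to showing that $H_i^{\psi_\Z}(M,\partial_+M)=0$ for $i\geq 2$. The degenerate case $\Reid(M,\psi)=0$ is treated separately: then $H_1^{\psi_\Z}(M,\partial_+M)$ has positive $\Z[G]$-rank, whence $\Delta^{\psi}(M,\partial_+M)=0$ by the Fact of Blanchfield quoted in the proof of Lemma~\ref{lem:abs_rel}, and both sides of the diagram vanish.

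The main obstacle, I expect, is the propagation of the homology cobordism hypothesis --- which controls only integer homology --- to the vanishing of the twisted relative modules $H_i^{\psi_\Z}(M,\partial_+M)$ in degrees $i\geq 2$. The $\Z[G]$-twisted long exact sequence does not reduce tautologically to its integer avatar, so the duality argument has to be set up with care to pair $(M,\partial_+M)$ with an appropriate dual pair and to relate the resulting twisted groups through the identification $\psi_\Z= \psi_\Z \circ m_\pm^{-1} \circ m_\pm$ provided by the homology cobordism condition. Along the way one must track the sign and unit ambiguities appearing in the Milnor--Turaev formula, which is why the identity~\eqref{eq:my_ident} and the whole diagram are only claimed up to multiplication by $\pm G$.
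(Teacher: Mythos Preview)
Your overall strategy---reduce to Proposition~\ref{prop:homology_cobordisms_Reidemeister} via Theorem~\ref{th:R_vs_A} and then establish the scalar identity $\tau^{\psi}(M,\partial_+M)=\Delta^{\psi}(M,\partial_+M)$---is exactly the route the paper takes. The only difference is in how that scalar identity is proved.

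The paper does not pass through the Milnor--Turaev alternating-product formula nor through Poincar\'e--Lefschetz duality. Instead it observes that $M$ collapses, \emph{relative to} $\partial_+M$, onto a CW-complex having only $1$-cells and $2$-cells, in equal number (for instance, the complex coming from a handle decomposition as in \S\ref{subsec:Heegaard}). On such a complex the $\psi_\Z$-twisted chain complex is concentrated in degrees $1$ and $2$ with $C_1$ and $C_2$ free of the same rank, so both $\tau^{\psi}(M,\partial_+M)$ and $\Delta^{\psi}(M,\partial_+M)=\Delta_0 H_1^{\psi_\Z}(M,\partial_+M)$ are computed by the determinant of the single boundary map $\partial_2$. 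This entirely sidesteps the ``main obstacle'' you identified: there is nothing to propagate from integer to twisted coefficients in degrees $\geq 2$, because those degrees are already absent at the chain level.

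Your route via the alternating-product formula would also succeed once you know $H_i^{\psi_\Z}(M,\partial_+M)=0$ for $i\geq 2$, but that vanishing is itself most cleanly seen from the same collapse: there are no cells in degree $\geq 3$, and $\partial_2$ is injective because its determinant is nonzero over $Q(G)$ (the $Q(G)$-complex is acyclic since $m_+:H^\psi\to H$ is an isomorphism, as recorded in \S\ref{subsec:Magnus}). So the duality argument you sketch is a detour. Incidentally, the ``degenerate case $\Reid(M,\psi)=0$'' you set aside never occurs for homology cobordisms, for the same reason.
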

 
\begin{proof}
The proposition can be proved directly from the definition of  $\Alex$, using an appropriate presentation of the $\Z[G]$-module $H_1^{\psi_\Z} (M,I)$.
It also follows from Theorem \ref{th:R_vs_A}, Proposition \ref{prop:homology_cobordisms_Reidemeister} and the fact that
$$
\tau^{\psi} (M, \partial_+M) = \Delta^{\psi} (M, \partial_+M)  \in \Z[G]/\pm G.
$$
The latter identity is shown using the fact that $M$ collapses, relatively to $\partial_+ M$,  onto a cell complex having only $1$-cells and $2$-cells in an equal number.
(For instance, consider the CW-complex resulting from a handle decomposition of $M$ as discussed in \S \ref{subsec:Heegaard}.)
Thus, the computation of both invariants $\tau^{\psi} (M, \partial_+M)$ and  $\Delta^{\psi} (M, \partial_+M)$ reduces to the determinant of a  same matrix.
(See \cite[Lemma 3.6]{FJR} for instance.)
\end{proof}

\begin{example}
Assume that $G:=\{1\}$ is the trivial group. Then $\C^\psi(F_k)= \C(F_k)$. Moreover $\Z[G]=\Z$ and $Q(G)=\Q$, so that  $H^\psi_\Z=H_1(F_k)$ and $H^\psi=H_1(F_k;\Q)$.
Note that $\Delta^\psi(M,\partial_+M)=1$ since $H_1^{\psi_\Z}(M,\partial_+M)=H_1(M,\partial_+M)$ is trivial in that case.
It~follows from Proposition \ref{prop:homology_cobordisms_Alex}  that $\Alex(M,\psi):\Lambda H_1(F_k) \to \Lambda H_1(F_k)$ 
is induced by the isomorphism $ (m_+)^{-1} m_-:H_1(F_k)\to H_1(F_k)$. 
\end{example}

\begin{remark}
If two cobordisms $M,M'\in \C^\psi(F_k)$ are homology cobordant, 
then we have $r^{\psi}(M)=r^{\psi}(M')$ (see  \cite[Theorem 3.6]{Sakasai_Magnus1}), but it may happen that $\Delta^\psi(M,\partial_+M) \neq  \Delta^\psi(M',\partial_+M')$
(see \cite[Lemma 3.15]{MM} for an example). It follows from Proposition~\ref{prop:homology_cobordisms_Alex}  
that the restriction of $\Alex$ to $\C^\psi(F_k)$ is \emph{stronger} than the  representation~$r^\psi$.
\end{remark}

\section{Computations with Heegaard splittings} \label{sec:Heegaard}

Let $G$ be a multiplicative subgroup of a field $\F$.
We give a simple recipe to compute the functor $\Reid= \Reid_{\F, G}$  using Heegaard splittings of cobordisms.
In this section, the extension of  a group homomorphism $\rho:H \to G$ to a ring homomorphism $\Z[H]\to \F$  is still denoted by $\rho$.

\subsection{Heegaard splittings}  \label{subsec:Heegaard}

In order to obtain concrete formulas for the functor $\Reid$, 
it is convenient to fix compatible systems of ``meridians and parallels'' on the model surfaces.
Specifically, we  choose on the model surface $F_1$ a \emph{meridian} $\alpha$ and a \emph{parallel}~$\beta$, which means the following:
 $\alpha$ and $\beta$ are oriented  simple closed curves in the interior of $F_1$  meeting transversely at a single point with homological intersection $[\alpha] \bullet [\beta] =+1$.
Then the identification between $F_1 \sharp_\partial \cdots  \sharp_\partial F_1$ and $F_k$
induces, for any integer $k\geq 1$, a \emph{system of meridians and parallels} $(\alpha_1,\dots,\alpha_k,\beta_1,\dots,\beta_k)$ on the surface $F_k$.

For any  $k\geq 0$, we denote by $C_0^k \in \Cob(0,k)$ the cobordism obtained from $F_k \times [-1,1]$ 
by attaching $k$ $2$-handles along  the curves $\alpha_1\times \{-1\},\dots,\alpha_k\times \{-1\}$. 
Similarly, let $C_k^0 \in \Cob(k,0)$ be the cobordism obtained from $F_k \times [-1,1]$ 
by attaching $k$ $2$-handles along the curves  $\beta_1\times \{1\},\dots,\beta_k\times \{1\}$. Observe that $ C_k^0 \circ C^k_0 =C_0^0 \in \Cob(0,0)$ is the $3$-dimensional ball $F_0 \times [-1,1]$.
Thus we shall refer to $C_k^0$ and $C^k_0$ as the \emph{upper} and \emph{lower} handlebodies, respectively.
(Clearly, these notions depend on the above choice of meridians and parallels.) 

Let also $\mcg(F_k)$ be the \emph{mapping class group} of the surface $F_k$, 
which consists of isotopy classes of (orientation-preserving)  homeomorphisms $f:F_k \to F_k$ fixing $\partial F_k$ pointwisely.
The \emph{mapping cylinder} construction, which associates to any such homeomorphism $f$ the cobordism
$$
\mcyl(f):= \big(F_k \times [-1,1], (f \times \{-1\}) \cup  (\partial F_k \times \Id) \cup (\Id\times \{1\}) \big),
$$
defines an embedding $\mcyl: \mcg(F_k) \to \C(F_k)$
of the mapping class group into the monoid of homology cobordisms (see \S \ref{subsec:homology_cobordism}).

Let $M \in \Cob(g_-,g_+)$ be an arbitrary cobordism. 
By elementary Morse theory, the $3$-manifold underlying $M$
can be obtained from the trivial cobordism $F_{g_+} \times [-1,1]$ by attaching simultaneously some $1$-handles (say, $r_+\geq 0$) along the ``bottom surface'' $F_{g_+} \times \{-1\}$,
 and by attaching subsequently some $2$-handles (say, $r_-\geq 0$) along the new ``bottom surface.''
We obtain in that way a \emph{Heegaard splitting} of $M$, i.e$.$ a decomposition in the monoidal category $\Cob$ of the form
\begin{equation}\label{eq:Heegaard}
M = \left( C^0_{r_+} \otimes \Id_{g_+}  \right) \circ \mcyl(f) \circ \left( C_0^{r_-} \otimes \Id_{g_-} \right)
\end{equation}
where  $g_++r_+=g_-+r_-$  and  $f\in \mcg(F_{g_\pm + r_\pm})$. 
See \cite[Theorem 5]{Kerler}.

\subsection{Computation of $\Reid$ with Heegaard splittings} \label{subsec:computation_Heegaard}

We now assume that the above cobordism $M$  comes with a group homomorphism $\varphi:H_1(M) \to G$:
$$
(M,\varphi) \in \Cob_G\big((g_-,\varphi_-),(g_+,\varphi_+)\big).
$$
The Heegaard splitting \eqref{eq:Heegaard} of $M$ induces a decomposition in the monoidal category $\Cob_G$ by endowing  each submanifold $S$ of that decomposition
with the group homomorphism $\bar \varphi:H_1(S)\to G$ obtained by restricting $\varphi$ to $S \subset M$. 
Hence we obtain
$$
\Reid(M,\varphi) = \Big(\Reid\big(C^0_{r_+}, \bar \varphi\big) \otimes \Id_{\Lambda H_+} \Big) \circ 
\Reid\big(\mcyl(f),\bar\varphi\big) \circ \Big( \Reid\big(C_0^{r_-}, \bar\varphi\big) \otimes \Id_{\Lambda H_-} \Big)
$$
where $H_\pm := H_1^{\varphi_\pm}(F_{g_\pm},\star)$ and the symbol $\bar \varphi$ denotes a representation in $G$  induced by~$\varphi$.
Thus the computation of $\Reid(M,\varphi)$ reduces to three cases: upper handlebodies, lower handlebodies and mapping cylinders.

To describe the values of $\Reid$ in those three cases, we need to fix further notation.
Let $k\geq 0$ be an integer and let $\psi:H_1(F_k) \to G$ be a group homomorphism.
We assume that, in our model surface $F_1$,  the intersection point $\alpha \cap \beta$ is connected by an arc to the base point $\star \in \partial F_1$:
hence the curves  $\alpha_1,\dots,\alpha_k,\beta_1, \dots, \beta_k$ are now viewed as oriented loops based at $\star\in \partial F_k$.
We denote by $(a_1^\psi,\dots,a_k^\psi,b_1^\psi,\dots,b_k^\psi)$ the basis of $H_1^\psi(F_k,\star)$ 
obtained by lifting these loops to the maximal abelian cover: 
\begin{equation} \label{eq:a_b}
\forall i=1,\dots,k, \quad a_i^\psi := \big[1 \otimes \widehat \alpha_i  \big], \ b_i^\psi:= \big[1 \otimes \widehat \beta_i  \big].
\end{equation}
Then the space $ \Lambda H_1^{\psi}(F_k,\star)$ can be identified to $\Lambda A_k^\psi \otimes \Lambda B_k^\psi$ where 
$A_k^\psi := \langle a_1^\psi,\dots, a_k^\psi\rangle$ and $B_k^\psi := \langle b_1^\psi,\dots, b_k^\psi \rangle$
are the subspaces of $H_1^{\psi}(F_k,\star)$ corresponding to meridians and parallels, respectively.

\begin{lemma}\label{lem:upper_handlebody}
Let  $\psi:H_1(C^0_k)\to G$ be a group homomorphism
and let $\psi_-:H_1(F_k)\to G$ be the restriction of $\psi$ to $F_k\subset \partial C^0_k$. Then the linear map
$$
\Reid(C_k^0,\psi) : \Lambda H_1^{\psi_-}(F_k,\star) \longrightarrow \F
$$
is trivial on $\Lambda^i A_k^{\psi_-} \otimes \Lambda^j B_k^{\psi_-}$ if $i\neq k$ or $j\neq 0$, and it sends $a_1^{\psi_-}  \wedge \cdots \wedge a_k^{\psi_-} $ to $1$.
\end{lemma}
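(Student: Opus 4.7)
The plan is to reduce everything to a direct torsion computation on a particularly simple spine of the handlebody $C_k^0$. First I note that the codomain $\Lambda H_1^{\psi_+}(F_0,\star)$ collapses to $\F$ concentrated in degree zero, since $F_0$ is a disk. As the degree of $C_k^0$ in $\Cob_G$ is $-k$, the $\F$-linear map $\Reid(C_k^0,\psi)$ has degree $-k$, hence automatically vanishes on $\Lambda^i A_k^{\psi_-}\otimes \Lambda^j B_k^{\psi_-}$ whenever $i+j\neq k$. This disposes of all but the top-degree part of the source.

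Next I would describe $H:=H_1^\psi(C_k^0,I)$ concretely. Because $I$ is contractible and contains $\star$, $H\cong H_1^\psi(C_k^0,\star)$. The handlebody $C_k^0$ has the simple homotopy type, relative to $\star$, of a wedge of $k$ oriented circles representing the meridians $\alpha_1,\dots,\alpha_k$; thus it admits a spine $X$ with a single $0$-cell $\star$, the $1$-cells $\alpha_1,\dots,\alpha_k$, and no higher relative cells. The $\psi$-twisted cellular complex $C^\psi(X,\star)$ is therefore concentrated in degree one with basis $c_1:=(1\otimes\widehat\alpha_1,\dots,1\otimes\widehat\alpha_k)$, so $\dim H=k=g(C_k^0)$ and $H$ admits the basis $([\widehat\alpha_1],\dots,[\widehat\alpha_k])$. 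Since the $2$-handles of $C_k^0$ are attached along the parallels, each $\beta_i$ bounds a disk in $C_k^0$, which implies $m_{-,*}(b_i^{\psi_-})=0$ in $H$, while $m_{-,*}(a_i^{\psi_-})=[\widehat\alpha_i]$.

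Combining these two observations, I apply the defining property of $\Reid$ with $y=1\in\Lambda^0 H_1^{\psi_+}(F_0,\star)=\F$ and $\vol=\Id$ (the unique integral volume form on $\F$, up to $\pm 1$):
\begin{equation*}
\Reid(C_k^0,\psi)(x)\;=\;\mathcal{R}^\psi_{C_k^0}\!\big(\Lambda^k m_-(x)\big)\qquad\hbox{for }x\in\Lambda^k H_1^{\psi_-}(F_k,\star).
\end{equation*}
For any basis wedge of $\Lambda^k H_1^{\psi_-}(F_k,\star)$ involving at least one $b_i^{\psi_-}$, the image under $\Lambda^k m_-$ vanishes in $\Lambda^k H$ and the right-hand side is zero. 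The only surviving case is $x=a_1^{\psi_-}\wedge\cdots\wedge a_k^{\psi_-}$, which maps to $[\widehat\alpha_1]\wedge\cdots\wedge[\widehat\alpha_k]$. By Definition \ref{def:Reidemeister_function} this is computed as $\tau(C^\psi(C_k^0,\star);c,h)$ with $h=([\widehat\alpha_1],\dots,[\widehat\alpha_k])$; since the chain complex is concentrated in degree one with $c_1=h$, the relevant change-of-basis matrix is the identity, so the torsion equals $1$ in $\F/\pm G$.

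The main potential obstacle is bookkeeping rather than substance: one must check that the chosen CW-structure can be taken as a smooth triangulation (or a subdivision thereof) with $\star$ as a vertex, so that Definition \ref{def:Reidemeister_function} applies verbatim, and one must track that the switch from the pair $(C_k^0,I)$ to $(C_k^0,\star)$ does not introduce any extra factor in $\pm G$. Both points are handled by the usual collapse of the contractible interval $I$ onto $\star$ combined with the simple-homotopy invariance of torsion (Theorem \ref{th:multiplicativity}).
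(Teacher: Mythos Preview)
Your proof is correct and follows essentially the same route as the paper: the degree argument kills everything outside total degree $k$, the observation that each $\beta_j$ bounds a disk in $C_k^0$ forces $m_-(b_j^{\psi_-})=0$ and hence kills every basis wedge containing a $b$, and the remaining value on $a_1^{\psi_-}\wedge\cdots\wedge a_k^{\psi_-}$ is computed on the spine consisting of a wedge of $k$ circles. The only cosmetic difference is that the paper invokes Lemma~\ref{lem:Fox} for this last step, whereas you compute the torsion directly from Definition~\ref{def:Reidemeister_function}; since there are no $2$-cells your direct computation is in fact the shorter of the two.
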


\begin{proof}
Set $N:= C_k^0 \in \Cob(k,0)$.
Since $\Reid(N,\psi)$ has degree $-k$, it must be trivial on $\Lambda^r H_1^{\psi_-}(F_k,\star)$ for $r\neq k$.
It remains to compute
\begin{equation}\label{eq:R_R}
\Reid(N,\psi)(x_1 \wedge \cdots \wedge x_k)
= \mathcal{R}_N^\psi \big(n_-(x_1) \wedge \cdots \wedge n_-(x_k)\big)
\end{equation}
for any $x_1,\dots,x_k \in  H_1^{\psi_-}(F_k,\star)$.
If one of the $x_i$'s  belongs to $B_k^{\psi_-}$, the right-hand side of \eqref{eq:R_R} is zero since, for all $j\in \{1,\dots,k\}$, $\beta_j$ bounds a disk in $N$ so that  $n_-(b_j^{\psi_-} )=0$.
So, we can assume that $x_1 \wedge \cdots \wedge x_k = a_1^{\psi_-}  \wedge \cdots \wedge a_k^{\psi_-}$.
In this case, we apply Lemma~\ref{lem:Fox} to the obvious spine $X=X^+$ of $N$:
the spine $X$ is a wedge of circles whose  $1$-cells $\gamma_1,\dots,\gamma_k$   are obtained by ``pushing'' the curves $\alpha_1,\dots,\alpha_k$ in the interior of~$N$.
We deduce that the right-hand side of \eqref{eq:R_R} is equal to $1$.
\end{proof}

\begin{lemma}\label{lem:lower_handlebody}
Let  $\psi:H_1(C_0^k)\to G$ be a group homomorphism
and let $\psi_+:H_1(F_k)\to G$ be the restriction of $\psi$ to $F_k\subset  \partial C_0^k$. Then the linear map
$$
\Reid(C^k_0,\psi) : \F \longrightarrow \Lambda H_1^{\psi_+}(F_k,\star) 
$$
sends the scalar $1$ to the multivector $a_1^{\psi_+} \wedge \cdots \wedge a_k^{\psi_+}$.
\end{lemma}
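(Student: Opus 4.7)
The proof will follow the same strategy as the one just given for Lemma \ref{lem:upper_handlebody}, with the roles of meridians and parallels essentially swapped and a small dualization to handle the fact that we are now computing a map \emph{into} the exterior algebra.

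Set $N := C_0^k \in \Cob(0,k)$ and $r := \Reid(N,\psi)(1) \in \Lambda^k H_1^{\psi_+}(F_k,\star)$. By the defining property of $\Reid$,
$$
\vol\big( r \wedge y \big) = \mathcal{R}_N^\psi\big(\Lambda^k m_+(y)\big) \quad \text{for all } y \in \Lambda^k H_1^{\psi_+}(F_k,\star),
$$
where $\vol$ is an integral volume form on $H_1^{\psi_+}(F_k,\star)$. I will normalize $\vol$ so that $\vol\big(a_1^{\psi_+}\wedge \cdots \wedge a_k^{\psi_+} \wedge b_1^{\psi_+}\wedge \cdots \wedge b_k^{\psi_+}\big) = 1$.

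First, by construction $N$ is obtained from $F_k \times [-1,1]$ by attaching $2$-handles along the curves $\alpha_i \times \{-1\}$, so each meridian $\alpha_i$ (pushed into $N$ via $m_+$) bounds a disk in $N$. Consequently $m_+(a_i^{\psi_+}) = 0$ in $H_1^\psi(N,I)$ for every $i$, and therefore $\mathcal{R}_N^\psi(\Lambda^k m_+(y))$ vanishes whenever $y$ involves at least one meridian $a_i^{\psi_+}$. Expanding $y$ in the standard basis $\{a_I^{\psi_+} \wedge b_J^{\psi_+}\}_{|I|+|J|=k}$ of $\Lambda^k H_1^{\psi_+}(F_k,\star)$, the only basis element for which the right-hand side may be nonzero is $y = b_1^{\psi_+}\wedge \cdots \wedge b_k^{\psi_+}$.

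Second, I will compute $\mathcal{R}_N^\psi\big(m_+(b_1^{\psi_+}) \wedge \cdots \wedge m_+(b_k^{\psi_+})\big)$ via Lemma \ref{lem:Fox}, using the obvious spine of $N$: a wedge $X$ of $k$ oriented circles $\gamma_1, \dots, \gamma_k$ based at $\star$, obtained by pushing $\beta_1, \dots, \beta_k$ into the interior of $N$. Since $X$ has no $2$-cells, the matrix in Lemma \ref{lem:Fox} reduces to the Fox matrix of the $\kappa_j := \gamma_j$ with respect to the $\gamma_i$'s, which is the identity; hence the torsion equals $1$ (up to $\pm G$).

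Combining these two observations, $\vol(r \wedge y) = 0$ for every basis element $y$ different from $b_1^{\psi_+}\wedge \cdots \wedge b_k^{\psi_+}$ and $\vol(r \wedge b_1^{\psi_+}\wedge \cdots \wedge b_k^{\psi_+}) = \pm 1$. Non-degeneracy of the pairing $(r,y) \mapsto \vol(r \wedge y)$ on complementary degrees then forces $r$ to be a scalar multiple of $a_1^{\psi_+}\wedge \cdots \wedge a_k^{\psi_+}$, with the scalar equal to $\pm 1$; the $\pm G$ ambiguity built into $\Reid$ absorbs the sign, yielding the claimed formula. The only mildly delicate point is keeping track of signs and orderings in the expansion of $y$; this is routine, and parallel to the proof of Lemma \ref{lem:upper_handlebody}.
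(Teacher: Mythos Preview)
Your proof is correct and follows essentially the same approach as the paper: expand $\Reid(N,\psi)(1)$ in the standard basis of $\Lambda^k H_1^{\psi_+}(F_k,\star)$, use the defining relation $\vol(r\wedge y)=\mathcal{R}_N^\psi(\Lambda^k n_+(y))$, and compute the Reidemeister function via Lemma~\ref{lem:Fox} on the spine given by the wedge of $\beta$-circles. The only cosmetic difference is that you dispose of the vanishing cases by the geometric observation that each $\alpha_i$ bounds a disk in $N$ (hence $n_+(a_i^{\psi_+})=0$), whereas the paper applies Lemma~\ref{lem:Fox} uniformly and reads off the vanishing from the Fox matrix; also, having named the cobordism $N$, you should write $n_+$ rather than $m_+$ to match the paper's conventions.
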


\begin{proof}
Set $(v_1,\dots,v_k, v_{k+1}, \dots, v_{2k}):= (a_1^{\psi_+},\dots,a_k^{\psi_+},b_1^{\psi_+},\dots,b_k^{\psi_+})$ and  let $\omega$ be the volume form on $H_1^{\psi_+}(F_k,\star)$ defined by $\omega(v_1\wedge \cdots \wedge v_{2k})=1$.
We denote $N:= C^k_0 \in \Cob(0,k)$ and write
$$
\Reid(N,\psi)(1)  = \sum_P z_P \cdot v_P \in \Lambda^k  H_1^{\psi_+}(F_k,\star)
$$
where $P$ runs over $k$-element subsets of $\{1,\dots,2k\}$ and $v_P$ is the wedge of the $v_p$'s for all $p\in P$.
For any $k$-element subset  $P \subset \{1,\dots,2k\}$, we have
\begin{equation}\label{eq:R_R...again}
\varepsilon_P \cdot z_{P} = \vol \left(\Reid(N,\psi)(1) \wedge v_{\overline P} \right)
=  \mathcal{R}_N^\psi \big( \Lambda^k n_+(v_{\overline P})\big)
\end{equation}
where $\overline P$ is the complement of $P$ and $\varepsilon_P$ is the signature of the permutation $P \overline P$.
To compute the right-hand side of \eqref{eq:R_R...again}, we apply Lemma \ref{lem:Fox} to the obvious  spine $X=X^+$ of $N$:
the spine $X$ is a wedge of circles whose $1$-cells $\gamma_1,\dots,\gamma_k$ are obtained by ``pushing'' the curves $\beta_1,\dots,\beta_k$ in the interior of $N$.
We obtain that $\mathcal{R}_N^\psi \big( \Lambda^k n_+(v_{\overline P})\big)$ is trivial except if $\overline P=\{k+1,\dots,2k\}$, 
in which case it takes the value~$1$.
We conclude that $z_P= 1$ if $P = \{1,\dots,k\}$ and $z_P=0$ otherwise.
\end{proof}

\begin{lemma}\label{lem:mapping_cylinder}
Let $f\in \mcg(F_k)$ and let $\psi_\pm: H_1(F_k) \to G$ be group homomorphisms such that $\psi_- =\psi_+ \circ f$.  
Denote by $\psi: H_1(F_k \times [-1,1]) \to G$ the isomorphism $\psi_+ \circ \operatorname{pr}$, 
where $\operatorname{pr}: F_k \times [-1,1] \to F_k$ is the cartesian projection. Then
$$
\Reid\big(\mcyl(f),\psi \big): \Lambda  H_1^{\psi_-}(F_k,\star) \longrightarrow \Lambda   H_1^{\psi_+}(F_k,\star) 
$$
is induced by the isomorphism $f: H_1^{\psi_-}(F_k,\star) \to H_1^{\psi_+}(F_k,\star)$. 
Moreover, the matrix of  this isomorphism in the bases $(a_1^{\psi_\pm},\dots,a_k^{\psi_\pm},b_1^{\psi_\pm},\dots,b_k^{\psi_\pm})$ of $H_1^{\psi_\pm}(F_k,\star)$  is 
$${\scriptsize
 \psi_+ \begin{pmatrix} 
\frac{\partial f_*(\alpha_1)}{\partial \alpha_1} & \cdots&\frac{\partial f_*(\alpha_k)}{\partial \alpha_{1}}  & \frac{\partial f_*(\beta_1)}{\partial \alpha_1} & \cdots&\frac{\partial f_*(\beta_k)}{\partial \alpha_{1}} \\
\vdots & &\vdots & \vdots & & \vdots\\
\frac{\partial f_*(\alpha_1)}{\partial \alpha_k} & \cdots&\frac{\partial f_*(\alpha_k)}{\partial \alpha_{k}}  & \frac{\partial f_*(\beta_1)}{\partial \alpha_k} & \cdots&\frac{\partial f_*(\beta_k)}{\partial \alpha_{k}}  \\
\frac{\partial f_*(\alpha_1)}{\partial \beta_1} & \cdots&\frac{\partial f_*(\alpha_k)}{\partial \beta_{1}}  & \frac{\partial f_*(\beta_1)}{\partial \beta_1} & \cdots&\frac{\partial f_*(\beta_k)}{\partial \beta_{1}} \\
\vdots & &\vdots & \vdots & & \vdots\\
\frac{\partial f_*(\alpha_1)}{\partial \beta_k} & \cdots&\frac{\partial f_*(\alpha_k)}{\partial \beta_{k}}  & \frac{\partial f_*(\beta_1)}{\partial \beta_k} & \cdots&\frac{\partial f_*(\beta_k)}{\partial \beta_{k}}  
\end{pmatrix} }
$$
where $f_*:\pi_1(F_k,\star) \to \pi_1(F_k,\star)$ is induced by $f$.
\end{lemma}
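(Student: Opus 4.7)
The plan is to deduce the first assertion from the generalized version of Proposition~\ref{prop:homology_cobordisms_Reidemeister} established in its proof as formula~\eqref{eq:more_general}, and then to obtain the explicit matrix via the classical Fox calculus identity applied to an appropriate 1-dimensional model of $F_k$. First I would verify that $\mcyl(f)$ meets the hypotheses of~\eqref{eq:more_general}: the projection $\operatorname{pr}:\mcyl(f)=F_k\times[-1,1]\to F_k$ is a homotopy equivalence under which $\operatorname{pr}\circ m_{-,*}=f_*$ and $\operatorname{pr}\circ m_{+,*}=\Id$, hence $\psi \circ m_{\pm,*}=\psi_\pm$ and the compatibility $\psi_-\circ m_{-,*}^{-1}=\psi_+=\psi_+\circ m_{+,*}^{-1}$ holds on $H_1(\mcyl(f))$. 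Equation~\eqref{eq:more_general} then yields
\begin{equation*}
\Reid\big(\mcyl(f),\psi\big)=\tau^\psi\big(\mcyl(f),\partial_+\mcyl(f)\big)\cdot\Lambda\big(m_+^{-1}\circ m_-\big),
\end{equation*}
and the same projection identifies $m_+^{-1}\circ m_-$ with the $\F$-linear isomorphism $H_1^{\psi_-}(F_k,\star)\to H_1^{\psi_+}(F_k,\star)$ induced by $f$.

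To conclude the first part, one needs $\tau^\psi(\mcyl(f),\partial_+\mcyl(f))\in\pm G$. For this I would equip $\mcyl(f)=F_k\times[-1,1]$ with a product CW-structure based on a CW-structure on $F_k$ with a single $0$-cell $\star$; the relative $\psi$-twisted chain complex of $(\mcyl(f),F_k\times\{1\})$ then splits as the tensor product of the $\psi$-twisted chain complex of $F_k$ with the elementary acyclic complex of $([-1,1],\{1\})$, and the multiplicativity of torsion (Theorem~\ref{th:multiplicativity}) gives the claim since the latter factor has torsion $\pm 1$.

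For the matrix formula I would use that $F_k$ has genus $k$ with one boundary component, so it deformation retracts onto the wedge $\Gamma=\alpha_1\cup\beta_1\cup\cdots\cup\alpha_k\cup\beta_k$ and $\pi_1(F_k,\star)$ is freely generated by the loops $\alpha_j,\beta_j$. The fundamental identity of Fox calculus, $w-1=\sum_j (\partial w/\partial\gamma_j)(\gamma_j-1)$ in $\Z[\pi_1(\Gamma)]$, transferred to the $\psi_+$-twisted cellular chain complex of $(\Gamma,\star)$, gives for any loop $\kappa$ in $\Gamma$:
\begin{equation*}
[1\otimes\widehat\kappa]=\sum_{j=1}^k \psi_+\!\left(\frac{\partial\kappa}{\partial\alpha_j}\right)a_j^{\psi_+}+\sum_{j=1}^k \psi_+\!\left(\frac{\partial\kappa}{\partial\beta_j}\right)b_j^{\psi_+}.
\end{equation*}
Applying this with $\kappa:=f_*(\alpha_i)$ and $\kappa:=f_*(\beta_i)$ reads off the $i$-th and $(k+i)$-th columns of the stated matrix as the coordinates of the images $f_*(a_i^{\psi_-})$ and $f_*(b_i^{\psi_-})$ in the basis $(a_j^{\psi_+},b_j^{\psi_+})_{j=1}^k$. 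The main obstacle is really only the torsion computation in the second paragraph; once that is settled, the remainder is a direct assembly of~\eqref{eq:more_general} with Fox's formula on a $1$-dimensional spine of $F_k$.
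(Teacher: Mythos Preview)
Your approach is exactly the paper's: the first assertion is derived from \eqref{eq:more_general} (with the observation that $\tau^\psi(\mcyl(f),\partial_+\mcyl(f))\in\pm G$, which the paper leaves implicit since $F_k\times[-1,1]$ collapses onto $F_k\times\{1\}$), and the matrix formula is the standard Fox-calculus expression that the paper dismisses as ``well known.'' One small remark: your invocation of Theorem~\ref{th:multiplicativity} for the torsion step is slightly off, since that theorem concerns short exact sequences rather than tensor products; the cleanest justification is simply that $(F_k\times[-1,1],F_k\times\{1\})$ admits an elementary collapse to the empty pair, forcing the relative torsion into $\pm G$.
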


\begin{proof}
The first statement follows  from  \eqref{eq:more_general}.
The second statement is well known.
\end{proof}

\subsection{Computation of $\Alex$ with Heegaard splittings}

Assume now that $G$ is a finitely generated free abelian group and take $\F:=Q(G)$.
There are counterparts of Lemmas~\ref{lem:upper_handlebody}, \ref{lem:lower_handlebody} and \ref{lem:mapping_cylinder} for the Alexander functor $\Alex$.
These counterparts follow from the same lemmas using Theorem \ref{th:R_vs_A}, or they can be proved independently using presentations of $\Z[G]$-modules.

For $G=\{1\}$, we deduce that the functor $\Alex$ is essentially the same thing as the  TQFT constructed in \cite{FN_U(1)}.
(Compare  the formulas given in \cite[\S 3]{Kerler_HQFT} with the above lemmas.) 
However, there are a few technical differences: in particular, we have considered  surfaces with circle boundary, 
whereas \cite{FN_U(1)} works with closed surfaces.

\section{Duality}  \label{sec:duality}

We prove two duality properties for the Reidemeister functor.
In this section, $\F$ is a field where a multiplicative subgroup $G$ is fixed,
and we assume that $\F$ is equipped with an involutive automorphism $f \mapsto \overline f$ such that $\overline{g}=g^{-1}$ for all $g\in G$.

\subsection{Twisted intersection form}

The first duality satisfied by  $\Reid$ involves  the ``twisted'' intersection forms of oriented surfaces with boundary.
We start by recalling this notion.

Let $k\geq 0$ be an integer and set $\pi:=\pi_1(F_k,\star)$.
The \emph{homotopy intersection form} of~$F_k$ is the pairing
$
\lambda:\Z[\pi] \times \Z[\pi] \to \Z[\pi]
$
defined by Turaev in \cite{Turaev_intersection}. 
We also refer to  Papakyriakopoulos' work \cite{Papakyriakopoulos} where this form is implicit, 
and to Perron's work \cite{Perron} where the same form $\lambda$ is re-discovered (and is denoted there by $\omega$).

The twisted homology group $H_1(F_k,\star;\Z[\pi])$ is identified (as a left $\Z[\pi]$-module) 
to the augmentation ideal $I(\pi)$ of $\Z[\pi]$ in the following way: for any oriented loop $\gamma\subset F_k$ based at $\star$,
let $\widetilde \gamma$ be the unique lift of $\gamma$ to the universal cover of $F_k$ starting  at the preferred lift $\widetilde \star$, 
and identify $[1 \otimes \widetilde \gamma]\in H_1(F_k,\star;\Z[\pi])$ to $[\gamma]-1 \in I(\pi)$.
Thus, by restricting $\lambda$ to $I(\pi) \times I(\pi)$, we obtain a pairing
$$
\langle-,-\rangle: H_1(F_k,\star;\Z[\pi]) \times  H_1(F_k,\star;\Z[\pi]) \longrightarrow  \Z[\pi].
$$
The derivation properties of $\lambda$ given in \cite{Turaev_intersection,Perron} imply that $\langle-,-\rangle$ is \emph{sesquilinear} in the sense that
$$
\langle ax +y,z\rangle = a \langle x,z\rangle + \langle y , z \rangle, \quad \langle z, ax +y \rangle =  \langle z,x \rangle S(a) + \langle  z,y \rangle
$$
for all $a \in \Z[\pi]$ and $x,y,z \in H_1(F_k,\star;\Z[\pi])$. Here $S:\Z[\pi]\to \Z[\pi]$ is the \emph{antipode},
i.e. the $\Z$-linear map defined by $S(a)=a^{-1}$ for all $a\in \pi$.

Let now $\psi:H_1(F_k) \to G$ be a group homomorphism: this induces  a structure of right $\Z[\pi]$-module on $\F$.
By identifying $H_1^\psi(F_k,\star)$  to $\F \otimes_{\Z[\pi]} H_1(F_k,\star;\Z[\pi])$, we obtain a pairing
\begin{equation} \label{eq:twisted_pairing}
\langle-,-\rangle: H_1^\psi(F_k,\star) \times  H_1^\psi(F_k,\star) \longrightarrow  \F
\end{equation}
defined by $\langle f_1 \otimes h_1, f_2 \otimes h_2 \rangle := f_1 \overline{f_2}\, \psi(\langle h_1,h_2 \rangle)$ 
for all $f_1,f_2 \in \F$ and $h_1,h_2 \in H_1(F_k,\star;\Z[\pi])$. This pairing is \emph{sesquilinear} in the sense that
$$
\langle f x + y,z\rangle = f \langle x,z\rangle + \langle y , z \rangle, \quad \langle z, f x +y \rangle =  \overline{f} \langle z,x \rangle  + \langle  z,y \rangle
$$
for all $f\in \F$ and $x,y,z \in H_1^\psi(F_k,\star)$.
The pairing \eqref{eq:twisted_pairing} can also be defined using Poincar\'e duality (with twisted coefficients)
and the fact that $H_1^\psi(F_k,J) \simeq H_1^\psi(F_k,\star) \simeq  H_1^\psi(F_k,J')$,
where  $J,J'$ are two closed intervals such that $J \cup J'=\partial F_k $ and $J\cap J'=\partial J = \partial J'$.
In particular, the pairing \eqref{eq:twisted_pairing} is \emph{non-singular} in the sense that $\langle x,-\rangle: H_1^\psi(F_k,\star) \to \Hom(H_1^\psi(F_k,\star),\F)$
is an isomorphism for any $x\in H_1^\psi(F_k,\star)$.

For any integer  $r\geq 1$, the pairing  \eqref{eq:twisted_pairing} also  induces a non-singular sesquilinear pairing 
$\langle-,-\rangle: \Lambda^r H_1^\psi(F_k,\star) \times \Lambda^r H_1^\psi(F_k,\star) \to  \F$ defined by
$$
\langle x_1\wedge \cdots \wedge x_r ,y_1\wedge \cdots \wedge y_r   \rangle 
= \det \begin{pmatrix} \langle x_1,y_1 \rangle  & \cdots & \langle x_1,y_r \rangle  \\ \vdots & \ddots & \vdots \\ \langle x_r,y_1 \rangle  & \cdots & \langle x_r,y_r \rangle  \end{pmatrix}
$$
for all $x_1,\dots,x_r,y_1,\dots,y_r \in H_1^\psi(F_k,\star)$. For $r=0$, we set $\langle x,y\rangle := x \overline{y}$ for all $x,y \in \F$.

\begin{remark}
The sesquilinear pairing \eqref{eq:twisted_pairing}   is not skew-hermitian. Instead, we have
\begin{equation}\label{eq:not-skew}
\forall x,y \in H_1^\psi(F_k,\star), \quad \langle x, y \rangle = - \overline{\langle y, x \rangle} + \partial_*(x)\, \overline{\partial_*(y)} 
\end{equation}
where $\partial_*: H_1^\psi(F_k,\star) \to \F$ is the connecting homomorphism in the long exact sequence of the pair $(F_k,\star)$.
This identity follows from a similar property for the homotopy intersection form $\lambda$: see \cite{Turaev_intersection,Perron}.
\end{remark}

\subsection{First duality}

Let $g_-,g_+\geq 0$ be integers.
The \emph{dual} of an $M\in \Cob(g_-,g_+)$ is the cobordism $\overline M \in \Cob(g_+,g_-)$
obtained from $M$ by reversing its orientation and by composing its boundary-parametrization $m:F(g_-,g_+) \to \partial M $ with the orientation-reversing homeomorphism
$$
{\small
\underbrace{-F_{g_+} \cup_{S^1 \times \{-1\}} \big( S^1 \times [-1,1] \big) \cup_{S^1 \times \{1\}} F_{g_-}}_{F(g_+,g_-)} 
\stackrel{\cong}{\longrightarrow}  \underbrace{-F_{g_-} \cup_{S^1 \times \{-1\}} \big( S^1 \times [-1,1] \big) \cup_{S^1 \times \{1\}} F_{g_+}}_{F(g_-,g_+)}},
$$
which is given by ``time-reversal'' $(x,t)\mapsto (x,-t)$ on the annulus $S^1\times [-1,1]$ and by the identity on $F_{g_+}$ and $F_{g_-}$.

\begin{theorem} \label{th:duality1}
For any $(M,\varphi)\in \Cob_G\big((g_-,\varphi_-),(g_+,\varphi_+)\big)$ and for any  $j\geq 0$, we have
\begin{equation} \label{eq:adjunction}
\forall x \in \Lambda^j H_-, \ \forall y \in \Lambda^{j+\delta\! g}H_+, \quad
{ \big\langle \Reid(M,\varphi)(x),y  \big\rangle} =   \left\langle x , \Reid\left(\overline M,\varphi\right )(y)  \right\rangle
\end{equation}
where $\delta\! g:= g_+-g_-$ and $H_\pm := H_1^{\varphi_\pm}(F_{g_\pm},\star)$.
\end{theorem}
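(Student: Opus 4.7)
The strategy is to reduce \eqref{eq:adjunction} to a handful of elementary cases via the Heegaard decomposition \eqref{eq:Heegaard}, and then to verify each case by direct computation.

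First I would verify that the adjunction property \eqref{eq:adjunction} is stable under composition and tensor product, so that it can be checked separately on each factor of a Heegaard decomposition. Compositional stability is a direct consequence of $\overline{N\circ M} = \overline{M} \circ \overline{N}$ and the functoriality of $\Reid$: assuming the adjunction for $(M,\varphi)$ and $(N,\psi)$, one has
\begin{equation*}
\langle \Reid(N\circ M)(x), y\rangle = \langle \Reid(M)(x), \Reid(\overline{N})(y)\rangle = \langle x, \Reid(\overline{M})\Reid(\overline{N})(y)\rangle.
\end{equation*}
Stability under $\otimes$ requires that, under the Mayer--Vietoris splitting of $H_1^{\psi_1\oplus\psi_2}(F_{k_1+k_2},\star)$ induced by $F_{k_1+k_2}=F_{k_1}\sharp_\partial F_{k_2}$, the twisted intersection pairing decomposes (up to Koszul signs) as a direct sum of the pairings on the two summands: this holds because Turaev's form $\lambda$ pairs trivially across the gluing arc of a boundary-connected sum, so all cross terms vanish and the determinantal extension to $\Lambda$ splits multiplicatively.

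By the Heegaard decomposition \eqref{eq:Heegaard}, every cobordism in $\Cob_G$ is a composition, after tensoring with identities, of upper handlebodies $C_k^0$, lower handlebodies $C_0^k$, and mapping cylinders $\mcyl(f)$. For $\mcyl(f)$, Proposition~\ref{prop:homology_cobordisms_Reidemeister} gives $\Reid(\mcyl(f),\psi) = \tau^\psi(\mcyl(f),\partial_+\mcyl(f))\cdot \Lambda f_*$. Since $\overline{\mcyl(f)}$ is $\mcyl(f^{-1})$, the adjunction reduces to the identity $\langle \Lambda f_*(x), y\rangle = \langle x, \Lambda (f^{-1})_*(y)\rangle$, which is the $f_*$-invariance of the twisted intersection form --- a classical property of Turaev's $\lambda$, as $f$ is a homeomorphism fixing $\partial F_k$. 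The Reidemeister torsion prefactors for $\mcyl(f)$ and $\mcyl(f^{-1})$ match via Poincar\'e--Lefschetz duality for the pair $(\mcyl(f),\partial_+\mcyl(f))$.

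For the handlebodies, Lemmas~\ref{lem:upper_handlebody} and \ref{lem:lower_handlebody} give explicit formulas for $\Reid(C_k^0,\psi)$ and $\Reid(C_0^k,\psi)$ in the bases $(a_i^\psi, b_i^\psi)$. The dual $\overline{C_k^0}$ (respectively $\overline{C_0^k}$) is a handlebody of the same genus in which the roles of meridians and parallels are interchanged; the analogues of those lemmas with $\alpha$'s and $\beta$'s swapped compute $\Reid$ on them. The adjunction then reduces to the computation of the intersection matrix on $(a_i^\psi, b_j^\psi)$; the essential input is $\langle a_i^\psi, b_j^\psi\rangle = \pm\delta_{ij}$ (reflecting the transverse intersection $\alpha_i \cap \beta_j$ at a single point), together with the vanishing of $\det\langle a_i^\psi, a_j^\psi\rangle$ and $\det\langle b_i^\psi, b_j^\psi\rangle$ as derived from \eqref{eq:not-skew}.

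The principal obstacle I anticipate is the careful bookkeeping of the Koszul signs and of the $\pm G$ ambiguity through the four reductions. In particular, the non-hermiticity of $\langle-,-\rangle$ recorded in \eqref{eq:not-skew} introduces correction terms involving $\partial_*$; verifying that the determinantal identities required in the handlebody case persist despite these corrections --- and that the $\sharp_\partial$-compatibility of the pairing is consistent with the Koszul rule used to extend $\Reid$ monoidally --- is where I expect the verification to be the most delicate.
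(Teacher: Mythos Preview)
Your overall strategy --- reduce via the Heegaard decomposition \eqref{eq:Heegaard}, check closure under composition, then verify the elementary pieces --- is exactly the paper's strategy, and your treatment of the mapping cylinder case is essentially the same as the paper's case~(i). However, there is a genuine gap in your monoidal reduction.

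You claim that under $F_{k_1+k_2}=F_{k_1}\sharp_\partial F_{k_2}$ the twisted intersection pairing on $H_1^\psi(F_{k_1+k_2},\star)$ splits as a direct sum because ``Turaev's form $\lambda$ pairs trivially across the gluing arc.'' This is false. The explicit matrix $J^\psi$ of $\langle-,-\rangle$ given in the paper's proof (the ``Interlude'') is only \emph{lower triangular}, not block-diagonal: for $i>j$ one has $\langle a_i^\psi,a_j^\psi\rangle = (1-\psi(\alpha_i))\overline{(1-\psi(\alpha_j))}$, which is generically nonzero. The loops $\alpha_i,\beta_i$ are based at the common point $\star\in\partial F_k$, and the homotopy intersection form on $H_1(F_k,\star;\Z[\pi])$ sees the interaction of their tails; these cross terms are precisely the $\partial_*$ corrections from \eqref{eq:not-skew} and do not vanish. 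Consequently your reduction to the ``pure'' handlebodies $C_k^0$ and $C_0^k$ via tensor-stability of \eqref{eq:adjunction} does not go through as stated. Relatedly, your claimed ``essential input'' $\langle a_i^\psi,b_j^\psi\rangle=\pm\delta_{ij}$ is also incorrect: the block $J_{ab}^\psi$ is lower triangular with nonzero sub-diagonal entries in general.

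The paper sidesteps this by \emph{not} invoking monoidal stability: it uses only closure under composition and then treats the \emph{stabilized} handlebodies $C_0^r\otimes\Id_{g_-}$ and $C_r^0\otimes\Id_{g_+}$ as indivisible building blocks. The key point you are missing is that for these specific cobordisms one has $\varphi_+(\alpha_i)=1$ (resp.\ $\varphi_-(\beta_i)=1$) for $i\le r$, because those curves bound disks in the handlebody; this forces the relevant rows of $J^{\varphi_\pm}$ to simplify drastically (see \eqref{eq:<a,...>}--\eqref{eq:<b,...>} in the paper), and only then can the determinantal identities be verified. To repair your argument, drop the $\otimes$-reduction and carry out the handlebody computation directly on the stabilized pieces using these vanishing conditions.
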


\noindent
Of course, the identity \eqref{eq:adjunction} only holds true up to multiplication by a constant in $\pm G$ (independent of $x$ and $y$).
The pairing $\langle -,-\rangle$ denotes the twisted intersection form of $H_+$ (respectively, $H_-$) on the left-hand side (respectively, the right-hand side)  of \eqref{eq:adjunction}.

\begin{proof}[Proof of Theorem \ref{th:duality1}]
Assume that $(M,\varphi)= (M',\varphi') \circ (M'',\varphi'')$ where  $(M',\varphi')$ and $(M'',\varphi'')$ are two morphisms in $\Cob_G$ satisfying \eqref{eq:adjunction}.
Then the dual of $M$ is $\overline M'' \circ \overline M'$, and it easily follows that $(M,\varphi)$ also satisfies \eqref{eq:adjunction}.
Consequently, and following the discussion of \S \ref{sec:Heegaard},
it is enough to prove \eqref{eq:adjunction} in the following three cases: (i)~$M$ is a mapping cylinder; (ii)~$M$ is a ``stabilized'' lower handlebody; (iii)~$M$ is a ``stabilized'' upper handlebody.\\

{\it Case (i).} Assume that $g_-=g_+=:k$ and that $M= \mcyl(f)$ is the mapping cylinder of an $f \in \mcg(F_k)$.
Then $\overline M= \mcyl(f^{-1})$.
Since $\varphi_+f=\varphi_-:H_1(F_k) \to G$ and since $f_*:\pi_1(F_k,\star)\to \pi_1(F_k,\star)$ preserves the homotopy intersection form,
the isomorphism $f:H_- \to H_+$ induced by $f:F_k\to F_k$ preserves the pairings \eqref{eq:twisted_pairing}.
Using the first statement of Lemma \ref{lem:mapping_cylinder}, we obtain \eqref{eq:adjunction}  as follows:
\begin{eqnarray*}
\forall x\in \Lambda^j H_-, \ \forall y \in \Lambda^j H_+, \quad
{ \left\langle \Reid(M,\varphi)(x),y  \right\rangle}&=&  \left\langle \Lambda^j f(x),y \right\rangle \\
&=&  \left\langle x, \Lambda^j f^{-1}(y) \right\rangle \ = \  \left\langle x , \Reid\left(\overline M,\varphi\right )(y)\right\rangle.\\[-0.3cm]
\end{eqnarray*}

{\it Interlude.}
In order to deal with cases (ii) and (iii), we need an explicit formula for the twisted intersection form $\langle-,-\rangle: H_1^\psi(F_k,\star) \times H_1^\psi(F_k,\star)  \to \F$
defined by  a group homomorphism $\psi: H_1(F_k) \to G$.
For this, we fix a system of meridians and parallels $(\alpha_1,\dots,\alpha_k, \beta_1,\dots,\beta_k)$ on $F_k$ as explained in \S \ref{subsec:Heegaard},
and we denote by $(a_1^{\psi},\dots, a_k^{\psi},b_1^{\psi},\dots,b_k^{\psi})$ the corresponding  basis of $H_1^{\psi}(F_k,\star)$: see \eqref{eq:a_b}.
For every $x,y \in H_1(F_k)$,  set $P^\psi(x,y):= (1-\psi(x))\overline{(1-\psi(y))} \in \F$.
Then, for an appropriate choice of meridians and parallels, the matrix of $\langle -,-\rangle$ in the basis  $(a_1^{\psi},\dots, a_k^{\psi},b_1^{\psi},\dots,b_k^{\psi})$ is
$$
J^{\psi} = \left(\begin{array}{c|c} J_{aa}^{\psi} & J_{ab}^{\psi} \\ \hline  J_{ba}^{\psi} & J_{bb}^{\psi} \end{array}\right)
$$
where $J_{aa}^{\psi}, J_{ab}^{\psi}, J_{ba}^{\psi}, J_{bb}^{\psi}$ are the following lower triangular matrices  \cite[Lemma 2.4]{Perron}:
\begin{equation}\label{eq:aa}
{\scriptsize J_{aa}^{\psi} = \begin{pmatrix}
{1 - \psi(\alpha_1)} &       0  &    0   &         \cdots  & 0  \\
P^\psi(\alpha_2,\alpha_1) & 1 - \psi(\alpha_2) &   0    &       \cdots   &   0 \\
P^\psi(\alpha_3,\alpha_1)  & P^\psi(\alpha_3,\alpha_2) & \ddots &     \ddots      &   \vdots  \\
\vdots  & \vdots  & \ddots & \ddots    &  0   \\
P^\psi(\alpha_k,\alpha_1)  & P^\psi(\alpha_k,\alpha_2)  & \ldots & P^\psi(\alpha_k,\alpha_{k-1})  & 1 - \psi(\alpha_k)
\end{pmatrix}},
\end{equation}
\begin{equation} \label{eq:ab}
{\scriptsize J_{ab}^{\psi} =  \begin{pmatrix}
\psi(\alpha_1) \overline{\psi(\beta_1)} &       0  &    0   &         \cdots  & 0  \\
P^\psi(\alpha_2,\beta_1) & \psi(\alpha_2) \overline{\psi(\beta_2)}   &   0   &       \cdots   &   0 \\
P^\psi(\alpha_3,\beta_1) & P^\psi(\alpha_3,\beta_2) & \ddots &     \ddots      &   \vdots  \\
\vdots  & \vdots  & \ddots & \ddots    &  0   \\
P^\psi(\alpha_k,\beta_1)& P^\psi(\alpha_k,\beta_2) & \ldots & P^\psi(\alpha_k,\beta_{k-1})& \psi(\alpha_k) \overline{\psi(\beta_k)} 
\end{pmatrix}}, 
\end{equation}
\begin{equation} \label{eq:ba}
 {\scriptsize J_{ba}^{\psi} =  \begin{pmatrix}
1- \overline{\psi(\alpha_1)} -\psi(\beta_1) &       0  &    0   &         \cdots  & 0  \\
P^\psi(\beta_2,\alpha_1) & 1- \overline{\psi(\alpha_2)} -\psi(\beta_2)  &   0    &       \cdots   &   0 \\
P^\psi(\beta_3,\alpha_1) & P^\psi(\beta_3,\alpha_2) & \ddots &     \ddots      &   \vdots  \\
\vdots  & \vdots  & \ddots & \ddots    &  0   \\
P^\psi(\beta_k,\alpha_1) & P^\psi(\beta_k,\alpha_2) & \ldots & P^\psi(\beta_k,\alpha_{k-1}) & 1- \overline{\psi(\alpha_k)} -\psi(\beta_k) 
\end{pmatrix},}
\end{equation}
\begin{equation} \label{eq:bb}
{\scriptsize J_{bb}^{\psi} =  \begin{pmatrix}
1- \overline{\psi(\beta_1)} &       0  &    0   &         \cdots  & 0  \\
P^\psi(\beta_2,\beta_1) & 1- \overline{\psi(\beta_2)} &   0    &       \cdots   &   0 \\
P^\psi(\beta_3,\beta_1)  & P^\psi(\beta_3,\beta_2)  & \ddots &     \ddots      &   \vdots  \\
\vdots  & \vdots  & \ddots & \ddots    &  0   \\
P^\psi(\beta_k,\beta_1)  & P^\psi(\beta_k,\beta_2)  & \ldots & P^\psi(\beta_k,\beta_{k-1})  & 1- \overline{\psi(\beta_k)}
\end{pmatrix}.}
\end{equation}

Besides, the following notation will be useful in the sequel. Let $\varepsilon\in \{+,-\}$ be a sign.
We  denote by $(v_1^\varepsilon,\dots,v_{g_\varepsilon}^\varepsilon, v_{g_\varepsilon +1}^\varepsilon, \dots, v_{2g_\varepsilon}^\varepsilon):= 
(a_1^{\varphi_{\varepsilon}},\dots,a_{g_\varepsilon}^{\varphi_{\varepsilon}},b_1^{\varphi_{\varepsilon}},\dots,b_{g_\varepsilon}^{\varphi_{\varepsilon}})$ 
the basis of $H_\varepsilon=H_1^{\varphi_\varepsilon}(F_{g_\varepsilon},\star)$.
For any $s$-element subset $P \subset \{1,\dots, 2g_\varepsilon \}$, let $v_P^\varepsilon \in \Lambda^s H_\varepsilon$ be the wedge of the vectors $v_p^\varepsilon$'s for all $p\in P$
and, when this makes sense, 
we shall also  denote by $(v_{P}^{\varepsilon})^{-\varepsilon} \in \Lambda^s H_{-\varepsilon}$  the multivector obtained from $v_P^\varepsilon$ 
by the transformations $a_i^{\varphi_{\varepsilon}} \mapsto a_{i-\varepsilon \delta\! g }^{\varphi_{-\varepsilon}}$ and $b_i^{\varphi_{\varepsilon}} \mapsto b_{i-\varepsilon \delta\! g}^{\varphi_{-\varepsilon}}$.\\

{\it Case (ii).} Assume  that  $M=C_0^{r} \otimes \Id_{g_-}$ where $r=\delta\! g$. 
Note that $\varphi_+(\alpha_i)=1$ for all $i\in\{1,\dots,r\}$, so that \eqref{eq:aa} and \eqref{eq:ab} applied to  $\psi:=\varphi_+$ give
\begin{equation} \label{eq:<a,...>}
\forall i  \in \{1,\dots,r\}, \forall j \in \{1,\dots, r+g_-\}, \quad \langle a_i^{\varphi_+}, a_j^{\varphi_+} \rangle =0 , \  \langle a_i^{\varphi_+}, b_j^{\varphi_+} \rangle = \delta_{ij}\, \overline{\varphi_+(\beta_j)} 
\end{equation}
and, combining this with \eqref{eq:not-skew}, we also obtain
\begin{equation} \label{eq:<...,a>}
\forall i  \in \{1,\dots,r\}, \forall j \in \{1,\dots, r+g_-\}, \quad \langle a_j^{\varphi_+}, a_i^{\varphi_+} \rangle =0 , \  \langle b_j^{\varphi_+}, a_i^{\varphi_+} \rangle = - \delta_{ij}\, {\varphi_+(\beta_j)}.
\end{equation}

Let $P \subset \{1,\dots, 2g_-\}$ with $ \vert P \vert =j$ and let $Q \subset \{1,\dots, 2g_+\}$ with $\vert Q \vert = r+j$.
It follows from Lemma \ref{lem:lower_handlebody} that
$$
{ \big\langle \Reid(M,\varphi)(v_P^-), v_Q^+  \big\rangle}  = { \big\langle a_1^{\varphi_+} \wedge \cdots \wedge a_r^{\varphi_+} \wedge (v_{P}^-)^+, v_Q^+ \big\rangle}.
$$
According to \eqref{eq:<a,...>}, this determinant is zero if the subset $B:= \{g_++1,\dots,g_++r\}$ is not contained in $Q$. If $B \subset Q$, then we get
\begin{eqnarray*}
{ \big\langle \Reid(M,\varphi)(v_P^-), v_Q^+  \big\rangle}  &=&  \varepsilon_{B} { \left\langle a_1^{\varphi_+} \wedge \cdots \wedge a_r^{\varphi_+} \wedge (v_{P}^-)^+, v_B^+ \wedge v_{B^c}^+ \right\rangle} \\
&=&   \varepsilon_{B} \big\langle a_1^{\varphi_+} \wedge \cdots \wedge a_r^{\varphi_+} , v_B^+ \big\rangle\, \big\langle (v_P^-)^+, v_{B^c}^+ \big\rangle \\
& = &   \varepsilon_{B}\,  \overline{\varphi_+(\beta_1 \cdots \beta_r)}\,  \big\langle (v_P^-)^+, v_{B^c}^+ \big\rangle
\end{eqnarray*}
where $B^c := Q \setminus B$ and $\varepsilon_{B}$ is the signature of the permutation $B B^c$ (where the elements of $B$ in increasing order are followed by the elements of $B^c$ in increasing order).
We also deduce from \eqref{eq:<...,a>} that $\langle (v_P^-)^+, v_{B^c}^+ \rangle =0$ if $B^c$ has a non-empty intersection with $A:=\{1,\dots,r\}$, 
and it follows that ${ \big\langle \Reid(M,\varphi)(v_P^-), v_Q^+  \big\rangle} =0$ if $ A \cap Q \neq \varnothing$. 

Besides, it follows from Lemma  \ref{lem:upper_handlebody} that $\Reid\left(\overline M,\varphi\right )(v_Q^+)$ is trivial if $A \cap Q \neq \varnothing$ or $B$ is not contained in $Q$. 
If $A \cap Q = \varnothing$ and $B \subset Q$, we get
$$
 \big\langle v_P^- , \Reid\left(\overline M,\varphi\right )(v_Q^+)  \big\rangle  =   \varepsilon_{B}\,   \left\langle v_P^- , \Reid\left(\overline M,\varphi\right )(v_B^+ \wedge v_{B^c}^+)  \right\rangle \\
  =   \varepsilon_{B}\,   \big\langle v_P^- ,   (v_{B^c}^+)^- \big\rangle. 
$$
We deduce that ${ \big\langle \Reid(M,\varphi)(v_P^-), v_Q^+  \big\rangle}  = \overline{\varphi_+(\beta_1 \cdots \beta_r) }\big\langle v_P^- , \Reid\left(\overline M,\varphi\right )(v_Q^+)  \big\rangle$ for any $P,Q$.
Since \eqref{eq:adjunction} is only required to hold true up to multiplication by a constant in $\pm G$, the theorem is proved in case (ii).\\

{\it Case (iii).}  Assume now that  $M=C_r^{0} \otimes \Id_{g_+}$ where $r= -\delta\! g$. 
Note that $\varphi_-(\beta_i)=1$ for all $i\in \{1,\dots,r\}$, so that \eqref{eq:bb} and \eqref{eq:ab} applied to $\psi:=\varphi_-$ give 
\begin{equation} \label{eq:<...,b>}
\forall i  \in \{1,\dots,r+g_+\}, \forall j \in \{1,\dots, r\}, \quad \langle b_i^{\varphi_-}, b_j^{\varphi_-} \rangle =0 , \  \langle a_i^{\varphi_-}, b_j^{\varphi_-} \rangle = \delta_{ij}\, {\varphi_-(\alpha_i)} 
\end{equation}
and, combining this with \eqref{eq:not-skew}, we also obtain
\begin{equation} \label{eq:<b,...>}
\forall i  \in \{1,\dots,r+g_+\}, \ j \in \{1,\dots, r\}, \quad \langle b_j^{\varphi_-}, b_i^{\varphi_-} \rangle =0 , \  \langle b_j^{\varphi_-}, a_i^{\varphi_-} \rangle = -\delta_{ij}\, \overline{\varphi_-(\alpha_i)} .
\end{equation}

Let $P \subset \{1,\dots, 2g_-\}$ with $ \vert P \vert =j$ and let $Q \subset \{1,\dots, 2g_+\}$ with $\vert Q \vert = j-r$.
By Lemma \ref{lem:upper_handlebody}, $\Reid(M,\varphi)(v_P^-)$ is trivial if $P$ does not contain $A:=\{1,\dots,r\}$ or $P$ has a non-empty intersection with $B:= \{g_-+1,\dots,g_-+r\}$. 
If $A \subset P$ and $P \cap B = \varnothing$, we obtain
$$
 \big\langle \Reid(M,\varphi)(v_P^-), v_Q^+  \big\rangle = \varepsilon_{A}\, \big\langle \Reid(M,\varphi)(v_A^- \wedge v_{A^c}^-), v_Q^+  \big\rangle = \varepsilon_{A}\, \big\langle  (v_{A^c}^-)^+, v_Q^+  \big\rangle
$$
where $A^c:= P \setminus A$ and $\varepsilon_A$ is the signature of the permutation $AA^c$.

Besides,  Lemma~\ref{lem:lower_handlebody} gives
$$
 \big\langle v_P^- , \Reid\left(\overline M,\varphi\right )(v_Q^+)  \big\rangle  = \big\langle v_P^-, b_1^{\varphi_-}\wedge \cdots \wedge b_r^{\varphi_-} \wedge  (v_Q^+)^-\big\rangle
$$
which, according to \eqref{eq:<...,b>}, is trivial if $P$ does not contain $A$. If $A\subset P$, we get
\begin{eqnarray*}
 \big\langle v_P^- , \Reid\left(\overline M,\varphi\right )(v_Q^+)  \big\rangle  &=   & \varepsilon_{A}\, \big\langle v_A^- \wedge v_{A^c}^-, b_1^{\varphi_-}\wedge \cdots \wedge b_r^{\varphi_-} \wedge  (v_Q^+)^-\big\rangle \\
  &=   & \varepsilon_{A}\, \langle v_A^- , b_1^{\varphi_-} \wedge \cdots \wedge b_r ^{\varphi_-}\rangle\,  \big\langle v_{A^c}^-, (v_Q^+)^-  \big\rangle \\
  & = &   \varepsilon_{A}\,  \varphi_-(\alpha_1 \cdots \alpha_r)\,  \big\langle v_{A^c}^-,(v_Q^+)^-  \big\rangle.
\end{eqnarray*}
It follows from \eqref{eq:<b,...>} that  $\big\langle v_{A^c}^-,(v_Q^+)^-  \big\rangle=0$ if $A^c$ has a non-empty intersection with $B$, 
so that $\big\langle v_P^- , \Reid\left(\overline M,\varphi\right )(v_Q^+)  \big\rangle=0$ if $P \cap B \neq \varnothing$.
We deduce that $\big\langle \Reid(M,\varphi)(v_P^-), v_Q^+  \big\rangle  = \overline{\varphi_-(\alpha_1 \cdots \alpha_r)}\,   \big\langle v_P^- , \Reid\left(\overline M,\varphi\right )(v_Q^+)  \big\rangle$ for any $P,Q$.
This proves the theorem in case (iii).
\end{proof}

\begin{example} \label{ex:sym_hcob}
We consider the situation of \S \ref{subsec:R_C}: let $\psi: H_1(F_k) \to G$ be a group homomorphism and let $M\in \C^\psi(F_k)$ with $k\geq 1$.
According to Proposition \ref{prop:homology_cobordisms_Reidemeister}, 
$\Reid(M,\psi)$ is determined by the relative Reidemeister torsion $\tau^\psi(M,\partial_+M)$ and the Magnus representation $r^\psi(M):H^\psi \to H^\psi$, where $H^\psi := H_1^\psi(F_k,\star)$.
Specializing Theorem~\ref{th:duality1} to $j:=0$, we obtain the well-known duality theorem
\begin{equation}\label{eq:duality_hcob}
\tau^\psi(M,\partial_+M) =   \overline{\tau^\psi(M,\partial_-M)} \ \in \F/\pm G,
\end{equation}
see \cite[Appendix 3]{Turaev_RTKT}. Next, specializing Theorem~\ref{th:duality1} successively to $j:=1$ and $j:=2$, we  obtain the invariance property
$$
\forall x,z\in H^\psi, \quad \big\langle r^\psi(M)(x), r^\psi(M)(z)\big\rangle = \left\langle x,z \right\rangle,
$$
which is already observed in  \cite[Theorem 2.4]{Sakasai_symplecticity}.
\end{example}

\begin{example} \label{ex:sym_knot}
We consider the situation of \S \ref{subsec:knot_exteriors}: let $G$ be the infinite cyclic group generated by $t$ and $\F:= Q(G)$,
let $M_K$ be the exterior of an oriented knot $K$ in an oriented homology $3$-sphere and let $\varphi_K: H_1(M_K) \to G$ be the canonical isomorphism.
There is  a system of meridian and parallel  $(\alpha,\beta)$ on $F_1$ 
and  a boundary-parametrization $m:F(1,0) \to \partial M_K$  such that 
\begin{itemize}
\item[(i)] $m_-(\alpha)$ is the oriented meridian of $K$ and   $m_-(\beta)$ is  \emph{the} parallel of $K$ that is null-homologous in $M_K$,
\item[(ii)] the matrix of $\langle -,-\rangle: H_- \times H_- \to \F$ in the corresponding basis  $(a,b):= (a_1^{\varphi_K m_-},b_1^{\varphi_K m_-}) $ of  $H_-:=H_1^{\varphi_K m_-}(F_1,\star)$  is
$ \begin{pmatrix} 1-t & t \\ -t^{-1} & 0 \end{pmatrix}.$
\end{itemize}
According to Proposition \ref{prop:Alexander_knot}, the map $\Reid(M_K,\varphi_K)$ is determined by the Alexander polynomial $\Delta(K)$.
By applying Theorem \ref{th:duality1} successively to  $x:= a$ and $x:=b$, we get
\begin{equation} \label{eq:M_K_bar}
\Reid(\overline{M_K}, \varphi_K)(1) = \overline{\Delta(K)}\,  b \ \in H_-.
\end{equation}
\end{example}

\subsection{Second duality}

The second duality satisfied by $\Reid$ does not involve the conjugation $f\mapsto \overline{f}$ of the field $\F$, 
and it is an immediate consequence of the definitions.

\begin{proposition} \label{prop:duality2}
For any $(M,\varphi)\in \Cob_G((g_-,\varphi_-),(g_+,\varphi_+))$ and  $j\geq 0$, we have
$$
\forall x \in \Lambda^j H_-, \ \forall y \in \Lambda^{g-j}H_+, \quad
\vol\big(\Reid(M,\varphi)(x) \wedge y \big) =   (-1)^{jg} \cdot \vol\left( x \wedge \Reid(\overline M,\varphi)(y) \right) 
$$
where $g:= g_++g_-$, $H_\pm := H_1^{\varphi_\pm}(F_{g_\pm},\star)$ and $\vol : \Lambda^{2g_\pm} H_\pm \to \F$ is an arbitrary integral volume form.
\end{proposition}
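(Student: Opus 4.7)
The idea is to unfold the definitions of $\Reid(M,\varphi)$ and $\Reid(\overline M,\varphi)$ on both sides of the asserted identity, reduce it to an equality between two values of the single function $\mathcal{R}_M^\varphi$, and then track the Koszul signs.

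First, by the defining formula of $\Reid(M,\varphi)(x)$ in \S\ref{subsec:Reidemeister_functor}, the left-hand side equals
$$
\vol\bigl(\Reid(M,\varphi)(x)\wedge y\bigr) \;=\; \mathcal{R}_M^\varphi\bigl(\Lambda^j m_-(x)\wedge \Lambda^{g-j} m_+(y)\bigr).
$$
For the dual cobordism, the construction of $\overline M$ recalled at the beginning of \S\ref{sec:duality} gives $\overline m_- = m_+$ and $\overline m_+ = m_-$ on the model surfaces (up to the identity on $F_{g_\pm}$). Moreover, the underlying topological $3$-manifold, its CW-structure, and the $\varphi$-twisted cell chain complex $C^\varphi(\overline M,I)=C^\varphi(M,I)$ are independent of the orientation of $M$; since the Reidemeister function is defined in \S\ref{subsec:R_function} purely from the torsion of this chain complex, one has $\mathcal{R}_{\overline M}^\varphi = \mathcal{R}_M^\varphi$. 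Applying the defining formula of $\Reid(\overline M,\varphi)(y)$ therefore yields
$$
\vol\bigl(\Reid(\overline M,\varphi)(y)\wedge x\bigr) \;=\; \mathcal{R}_M^\varphi\bigl(\Lambda^{g-j} m_+(y)\wedge \Lambda^j m_-(x)\bigr).
$$

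Next, I would compare the two expressions via Koszul signs. On the one hand, the alternating character of $\mathcal{R}_M^\varphi$ (a consequence of its construction as a torsion, see Lemma~\ref{lem:torsion_as_function}) gives
$$
\mathcal{R}_M^\varphi\bigl(\Lambda^{g-j} m_+(y)\wedge \Lambda^j m_-(x)\bigr) \;=\; (-1)^{j(g-j)}\,\mathcal{R}_M^\varphi\bigl(\Lambda^j m_-(x)\wedge \Lambda^{g-j} m_+(y)\bigr).
$$
On the other hand, since $\Reid(\overline M,\varphi)(y)\in \Lambda^{2g_- - j}H_-$, moving $x\in\Lambda^j H_-$ across it inside the exterior product contributes a sign $(-1)^{j(2g_- - j)}$. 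Summing the two exponents yields $j(g-j) + j(2g_- - j) \equiv jg \pmod 2$, which is exactly the sign in the statement.

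The proof is essentially sign bookkeeping once the definitions have been written out, and no genuine obstacle arises. The one conceptual point to verify is the orientation-invariance $\mathcal{R}_{\overline M}^\varphi = \mathcal{R}_M^\varphi$, which is immediate from the definition of $\mathcal{R}^\varphi$ via the $\varphi$-twisted cell chain complex of $M$ relative to a base point; the identity then holds up to the $\pm G$ ambiguity already inherent in $\Reid$ and in the choice of integral volume forms on $H_+$ and $H_-$.
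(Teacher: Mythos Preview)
Your proposal is correct and matches the paper's own stance: the paper simply says that Proposition~\ref{prop:duality2} ``is an immediate consequence of the definitions'' and gives no further argument. You have carried out exactly that unfolding of the definitions, together with the sign bookkeeping, and your observation that $\mathcal{R}_{\overline M}^\varphi = \mathcal{R}_M^\varphi$ (the Reidemeister function is orientation-insensitive, being built from the cell chain complex of $(M,I)$) is the only nontrivial point.
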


Despite its simplicity, this proposition turns out to be interesting when it is combined with Theorem \ref{th:duality1}.

\begin{example}
We use the same notation as in Example \ref{ex:sym_hcob}. Let $(z_1,\dots,z_{2k})$ be a basis of $H^\psi$ 
arising from of a basis of the free $\Z[H_1(F_k)]$-module $H_1(F_k,\star;\Z[H_1(F_k)])$ and assume that $\vol$ is given by $\vol(z_1\wedge \cdots \wedge z_{2k})=1$. 
By applying Proposition \ref{prop:duality2} to $x:=z_1\wedge \cdots \wedge z_{2k}$, we get
$
\tau^\psi(M,\partial_+ M)\cdot  \det r^\psi(M) = \tau^\psi(M,\partial_-M).
$
Combined with \eqref{eq:duality_hcob}, this relation gives the symmetry
$$
\tau^\psi(M,\partial_+ M) \cdot  \det r^\psi(M) = \overline{\tau^\psi(M,\partial_+M)} \ \in \F/\pm G
$$
which is also observed in \cite[Theorem 5.3]{Sakasai_Magnus2}.
\end{example}

\begin{example}
We use the same notation as in Example \ref{ex:sym_knot}. Let   $\vol$ be the volume form on $H_-$ defined by $\vol(a\wedge b)= 1$.
By applying Proposition \ref{prop:duality2}  successively  to $x:=a$ and $x:=b$, we obtain $\Reid(\overline{M_K},\varphi_K)(1)= \Delta(K)\, b$.
Combined with \eqref{eq:M_K_bar}, we recover the classical symmetry of the Alexander polynomial:
$$
\Delta(K) = \overline{\Delta(K)} \in \Z[G]/\pm G.
$$
\end{example}

 \appendix

\section{A short review of combinatorial torsions} \label{sec:review_torsion}

We recall the definition and basic properties of the torsions of chain complexes. 
The reader is referred to \cite{Milnor_Whitehead} and \cite{Turaev_book} for further details and references.
In this appendix, $\F$ is a field.

\subsection{Definition of the torsion} \label{subsec:torsion_def}

Given an $\F$-vector space $V$ of finite dimension $n\geq 0$, 
an $n$-tuple $b=(b_1,\dots,b_n)$  of  vectors  in $V$ and a basis $c=(c_1,\dots,c_n)$ of $V$, 
we denote by 
$
[b/c]\in \F
$ 
the determinant of the matrix expressing $b$ in the basis $c$. 
Two bases $b$ and $c$ are said to be \emph{equivalent} if $[b/c]=1$.

Given a short exact sequence of $\F$-vector spaces $0 \to V' \to V \to V'' \to 0$
and some bases $c'$ and $c''$ of $V'$ and $V''$ respectively, we  denote by $c' c''$
the equivalence class of bases of $V$ obtained by juxtaposing (in this order) the image of $c'$  in $V$ and a lift of $c''$ to $V$.

By a \emph{finite $\F$-chain complex of length} $m\geq 1$, we mean  a  chain complex $C$ in the category of finite-dimensional $\F$-vector spaces
and we assume that $C$ is concentrated in degrees $0,\dots,m$:
$$
C=\Big( \xymatrix{ C_m\ar[r]^-{\partial_{m}} & C_{m-1} \ar[r]  & \cdots \ar[r]^{\partial_1} & C_0} \Big).
$$
A \emph{basis} of $C$ is a family  $c=(c_m,\dots,c_0)$ where $c_i$ is a basis of  $C_i$ for all $i\in\{0,\dots,m\}$.
A  \emph{homological basis} of $C$ is a family $h=(h_m,\dots,h_0)$ where $h_i$ is a basis of the $i$-th homology group $H_i(C)$ for all $i\in\{0,\dots,m\}$.
If we have choosen a basis $b_j$ of the space of $j$-dimensional boundaries $B_j(C):= \operatorname{Im} \partial_{j+1}$ for all $j\in \{0,\dots,m-1\}$,
then a homological basis $h$ of $C$ induces an equivalence class of bases of $C_i$ for any $i$:
specifically, we consider  the basis   $(b_ih_i)b_{i-1}$ of $C_i$ obtained by juxtaposition in   the following short exact sequences where we denote  $Z_i(C):=\Ker \partial_i$:
\begin{eqnarray}
\label{eq:BZH} &&0  \longrightarrow B_i(C) \longrightarrow Z_i(C) \longrightarrow H_i(C) \longrightarrow 0\\
\label {eq:ZCB} &\hbox{and}& 0  \longrightarrow Z_i(C) \longrightarrow C_i \stackrel{\partial_i} {\longrightarrow} B_{i-1}(C) \longrightarrow 0.
\end{eqnarray}

\begin{definition} 
\label{def:algebraic_torsion}
The \emph{torsion} of  a finite $\F$-chain complex $C$ of length $m$, equipped with a basis $c$ and a homological basis $h$, is the scalar
\begin{displaymath} 
\tau(C;c,h):=  \prod_{i=0}^m \big[(b_ih_i)b_{i-1}/c_i\big]^{(-1)^{i+1}} \in \F\setminus \{0\}.
\end{displaymath}
It is easily checked that this definition does not depend on the choice of $b_0,\dots,b_m$ 
and, when $C$ is acyclic, we set $\tau(C;c):= \tau(C;c,\varnothing)$.
\end{definition}

The following lemma, which is well known, is a way of viewing the torsion  as a function in homology. 

\begin{lemma}\label{lem:torsion_as_function}
Let $C$ be a finite $\F$-chain complex of length $m\geq 1$,
let $k\in \{0,\dots,m\}$ and set $\beta:= \dim H_k(C)$.
Assume given a basis $c=(c_m,\dots,c_0)$ of $C$ and a basis $h_i$ of $H_i(C)$ for every $i\neq k$.
Then there is a unique linear map $\ell:\Lambda^{\beta} H_k(C) \to \F$ such that
$$
\ell(v_1\wedge \cdots \wedge v_{\beta}) = 
\left\{ \begin{array}{ll}
\tau\big(C;c,(h_m,\dots,h_{k+1},v,h_{k-1},\dots,h_0) \big) & \hbox{if $k$ is odd},  \\
\tau\big(C;c,(h_m,\dots,h_{k+1},v,h_{k-1},\dots,h_0) \big)^{-1} & \hbox{if $k$ is even},  
\end{array}\right.
$$
for any basis $v=(v_1,\dots, v_{\beta})$ of $H_k(C)$.
\end{lemma}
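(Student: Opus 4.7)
The plan is to isolate how $\tau(C;c,h)$ depends on the basis $h_k$ of $H_k(C)$, and then verify that this dependence is multilinear and alternating in the entries of $h_k$. Inspecting Definition \ref{def:algebraic_torsion}, once auxiliary bases $b_0,\dots,b_{m-1}$ of the spaces $B_j(C)$ are fixed, the only factor of $\tau(C;c,h)$ involving $h_k$ is the $i=k$ term $\bigl[(b_k h_k) b_{k-1}/c_k\bigr]^{(-1)^{k+1}}$. Writing $h$ as shorthand for $(h_m,\dots,h_{k+1},v,h_{k-1},\dots,h_0)$, we therefore have
$$
\tau(C;c,h) = T \cdot \bigl[(b_k\, v)\, b_{k-1}/c_k\bigr]^{(-1)^{k+1}},
$$
where $T \in \F\setminus\{0\}$ collects the remaining factors and is independent of $v$.

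Next I would study the function $F:H_k(C)^{\beta} \to \F$ defined, on a tuple $(v_1,\dots,v_\beta)$, by
$$
F(v_1,\dots,v_\beta) := \bigl[(b_k\, (v_1,\dots,v_\beta))\, b_{k-1} / c_k\bigr].
$$
To evaluate $F$ one lifts each $v_i$ to some $\tilde v_i \in Z_k(C)$ via \eqref{eq:BZH}, concatenates with $b_k$ and with a lift of $b_{k-1}$ via \eqref{eq:ZCB} to obtain an ordered basis of $C_k$, and takes its determinant with respect to $c_k$. Two lifts of the same $v_i$ differ by an element of $B_k(C)$, spanned by $b_k$, and adding a combination of the $b_k$-columns to a $\tilde v_i$-column leaves the determinant unchanged; hence $F$ is well defined on $H_k(C)^\beta$. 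Multilinearity and alternation in $(v_1,\dots,v_\beta)$ then follow directly from the corresponding properties of the determinant in the $\tilde v_i$-columns. By the universal property of the exterior power, $F$ descends to a unique linear map $\bar F : \Lambda^\beta H_k(C) \to \F$.

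The sought-for $\ell$ is then $T \cdot \bar F$ when $k$ is odd and $T^{-1} \cdot \bar F$ when $k$ is even; on a basis $v$, this recovers exactly the formula in the statement. Uniqueness is immediate because $\dim H_k(C) = \beta$ forces $\dim \Lambda^\beta H_k(C) = 1$, so any linear map on $\Lambda^\beta H_k(C)$ is determined by its value on the wedge $v_1 \wedge \cdots \wedge v_\beta$ of any single basis. No real obstacle is anticipated: the whole argument is the bookkeeping remark that only one factor of the torsion depends on $h_k$, combined with the standard fact that a determinant against a fixed basis is a top-degree alternating form.
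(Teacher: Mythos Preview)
Your proof is correct and takes essentially the same approach as the paper: both isolate the single factor of $\tau(C;c,h)$ depending on $h_k$, observe it is a determinant in the lifted entries of $v$, and use multilinearity and alternation of the determinant to descend to $\Lambda^\beta H_k(C)$. You give a bit more detail on the independence of the choice of lifts and handle both parities explicitly, whereas the paper reduces to the odd case; otherwise the arguments are the same.
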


\begin{proof}
The unicity of $\ell$ is obvious and, clearly, we can assume that $k$ is odd. 
Let $s:H_k(C) \to Z_k(C)$ and $t:B_{k-1}(C) \to C_k$ be $\F$-linear sections of \eqref{eq:BZH} and \eqref{eq:ZCB}, respectively.
For any $\beta$-tuple $v=(v_1,\dots, v_{\beta})$ of elements of $H_k(C)$, we set
$$
\ell(v) :=   \big[ b_k\, s(v)\, t(b_{k-1}) / c_k\big] \cdot \prod_{i\neq k }\big[(b_ih_i)b_{i-1}/c_i\big]^{(-1)^{i+1}} \in \F
$$
where $b_k\, s(v)\, t(b_{k-1})$ denotes the family of vectors of $C_k$ obtained by juxtaposing (in this order) $b_k$, $s(v)$ and $t(b_{k-1})$.
The resulting map $\ell: H_k(C)^{\beta}\to \F$ is multilinear and alternate, 
hence it induces a map $\ell: \Lambda^{\beta} H_k(C)\to \F$ with the desired property.
\end{proof}

\subsection{Multiplicativity of the torsion}

Consider a short exact sequence of finite $\F$-chain complexes of length $m\geq 1$:
\begin{equation}
\label{eq:another_ses}
\xymatrix{
0 \ar[r] & C'  \ar[r] & C \ar[r] & C'' \ar[r] & 0.
}
\end{equation}
Let us assume that $C', C,C''$ are based by $c', c,c''$ respectively,
and homologically based by $h', h,h''$ respectively. 
We further assume that the bases $c', c,c''$ are \emph{compatible} in the sense that
$c_i$ is equivalent to $c'_ic''_i$ for every $i\in \{0,\dots,m\}$.
The short exact sequence (\ref{eq:another_ses}) induces a long exact sequence in homology:
$$
\mathcal{H} := \big(H_m(C') \to H_m(C) \to  H_m(C'') \to  \cdots \to H_0(C') \to H_0(C) \to H_0(C'')\big).
$$
We regard  $\mathcal{H}$ as an acyclic finite $\F$-chain complex based by 
$$
(h',h,h''):= (h'_m,h_m,h''_m,\dots,h'_0,h_0,h''_0).
$$

The following formula is classical in the theory of combinatorial torsions: see \cite[Theorem 3.2]{Milnor_Whitehead} or \cite[Lemma 3.4.2]{Turaev_RTKT}.

\begin{theorem} 
\label{th:multiplicativity}
With the above notation, we have
\begin{equation}
\label{eq:multiplicativite}
\tau(C;c,h)=  \varepsilon \cdot  \tau(C';c',h') \cdot \tau(C'';c'',h'') \cdot \tau\big(\mathcal{H};  (h',h,h'')\big)
\end{equation}
where $\varepsilon$ is a sign  depending only on the dimensions of the $\F$-vector spaces $C'_i,C_i,C''_i$ and $H_i(C'),H_i(C),H_i(C'')$ for all $i\in \{0,\dots,m\}$.
\end{theorem}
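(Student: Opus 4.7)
The plan is to reduce Theorem \ref{th:multiplicativity} to a purely algebraic statement about acyclic chain complexes, prove that statement by direct determinant manipulations, and then recover the connecting morphism as a separate torsion factor.

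\textbf{Step 1 (Reduction to the acyclic case).} For any finite based complex $(C,c)$ with homological basis $h$, build an acyclic augmentation $\widetilde C$ by adjoining new generators that kill the homology: at each degree $i$, adjoin a copy of $H_i(C)$ placed in degree $i+1$, with differential sending each new generator to a cycle representing the corresponding class in $h_i$. Equip $\widetilde C$ with the basis $\widetilde c$ assembled from $c$ and $h$. Expanding Definition \ref{def:algebraic_torsion} shows that $\tau(\widetilde C;\widetilde c)$ coincides with $\tau(C;c,h)$ up to a sign depending only on the integers $\dim C_i$ and $\dim H_i(C)$. Applied to each of $C',C,C''$, this reduces \eqref{eq:multiplicativite} to an identity among acyclic torsions.

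\textbf{Step 2 (Algebraic multiplicativity lemma).} Prove the following: if $0\to A\to B\to C\to 0$ is a short exact sequence of finite acyclic $\F$-chain complexes with bases $a,b,c$ compatible in every degree, then
$$\tau(B;b)=\varepsilon'\cdot \tau(A;a)\cdot \tau(C;c)$$
with $\varepsilon'$ depending only on the dimensions. In the acyclic setting, $Z_i=B_i$ in each complex, and the long exact sequence forces the induced sequence $0\to B_i(A)\to B_i(B)\to B_i(C)\to 0$ to be short exact at every degree. Choose compatible boundary bases, expand the three torsions via Definition \ref{def:algebraic_torsion}, and use the multiplicativity of determinants in each short exact sequence $0\to A_i\to B_i\to C_i\to 0$ of compatibly based vector spaces. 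The contributions telescope degree by degree.

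\textbf{Step 3 (Extracting $\mathcal{H}$).} Applying Step 1 separately to $C',C,C''$ produces acyclic augmentations $\widetilde{C'},\widetilde C,\widetilde{C''}$, which do \emph{not} assemble into a short exact sequence of complexes: the auxiliary generators used in the augmentations are indexed by the three distinct homology sequences $H_\ast(C'),H_\ast(C),H_\ast(C'')$, and matching them across $C'\to C\to C''$ requires precisely the connecting homomorphisms of $\mathcal{H}$. Compare the naive construction with a corrected one that does fit in a short exact sequence; the discrepancy is an acyclic complex canonically identified with $\mathcal{H}$ (based by $(h',h,h'')$). Applying Step 2 to the corrected augmented sequence, and accounting for $\mathcal{H}$ as a separate factor, yields \eqref{eq:multiplicativite}.

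\textbf{Main obstacle.} The delicate part is sign bookkeeping. Every juxtaposition of bases $(b_ih_i)b_{i-1}$, every lift of a boundary basis across a short exact sequence, and every identification of $\mathcal{H}$ with an acyclic complex contributes Koszul signs. Verifying that the cumulative sign $\varepsilon$ in \eqref{eq:multiplicativite} depends only on the tuple of dimensions $(\dim C'_i,\dim C_i,\dim C''_i,\dim H_i(C'),\dim H_i(C),\dim H_i(C''))_{i=0}^m$, and is independent of any choice of compatible basis or homological basis, is the technical heart of the argument. The cleanest way to settle this is an invariance check: modifying a single basis vector (respectively, a single class in a homological basis) by a non-zero scalar multiplies both sides of \eqref{eq:multiplicativite} by the same factor, which forces the remaining ambiguity to be a pure sign depending only on the dimension data.
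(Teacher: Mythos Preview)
The paper does not prove Theorem~\ref{th:multiplicativity}: it is stated as a classical result and the reader is referred to \cite[Theorem~3.2]{Milnor_Whitehead} and \cite[Lemma~3.4.2]{Turaev_RTKT}. So there is no proof in the paper to compare your proposal against.

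That said, your route differs from the standard one. Milnor's argument is more direct: given the short exact sequence \eqref{eq:another_ses}, one chooses bases of the boundary and cycle subspaces of $C',C,C''$ that are compatible across the short exact sequences $0\to B_i(C')\to B_i(C)\to B_i(C'')$ (the last map is not surjective in general; its cokernel is governed by the connecting homomorphism, and this is exactly where $\tau(\mathcal{H})$ enters). One then expands all torsions using these choices and compares determinants degree by degree. Your augmentation strategy is legitimate in principle, but Step~3 is where the real content hides and your sketch is thin there: the assertion that the three augmentations can be ``corrected'' into a short exact sequence of acyclic complexes, with the discrepancy being precisely the based complex $\mathcal{H}$, is not obvious and essentially encodes the same bookkeeping that the direct proof performs. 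If you want to pursue this line, you should make explicit the chain maps between the augmented complexes and exhibit $\mathcal{H}$ as a subquotient or a mapping-cone factor; otherwise Step~3 is closer to a restatement of the theorem than a proof of it. Your final invariance check for the sign is sound, but note that it only shows $\varepsilon$ is independent of the bases, not that it depends solely on the listed dimensions; for that you still need the explicit computation.
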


\begin{example} \label{ex:direct_sum}
Assume that $C=C'\oplus C''$ and that the chain maps $C'\to C$ and $C\to C''$ in \eqref{eq:another_ses} are the natural inclusion and projection, respectively.
For all $i\in \{0,\dots,m\}$, let $c_i$ be the basis of $C_i=C'_i\oplus C''_i$ obtained by juxtaposing (in this order) some  bases $c'_i$ and $c''_i$ of $C'_i$ and $C''_i$, respectively;
similarly, let $h_i$ be the basis of $H_i(C) = H_i(C')\oplus H_i(C'')$ obtained by juxtaposing some bases $h'_i$ and $h''_i$ of $H_i(C')$ and $H_i(C'')$, respectively.
We set  $c:=(c_m,\dots,c_0)$ and $h:=(h_m,\dots,h_0)$.
Then $\tau(C;c,h)=  \varepsilon \cdot  \tau(C';c',h') \cdot \tau(C'';c'',h'')$.
\end{example}

\bibliographystyle{amsalpha}

\bibliography{AF}

\end{document}